\theoremstyle{plain}
\newtheorem{thm}{Theorem}[section]
\newtheorem{cor}[thm]{Corollary} 
\newtheorem{lem}[thm]{Lemma} 
\newtheorem{prop}[thm]{Proposition} 
\theoremstyle{definition}
\newtheorem{defn}[thm]{Definition}
\theoremstyle{remark}
\newtheorem{rem}[thm]{Remark}
\theoremstyle{remark}
\theoremstyle{remark}
\theoremstyle{remark}
\theoremstyle{definition}
\theoremstyle{definition}
\theoremstyle{plain}
\theoremstyle{definition}
\theoremstyle{remark}
\theoremstyle{remark}
\theoremstyle{definition}
\theoremstyle{remark}
  \newtheorem*{acknowledgement*}{Acknowledgement}
\newcommand{\bigslant}[2]{{\raisebox{.2em}{$#1$}\left/\raisebox{-.2em}{$#2$}\right.}}
\newcommand{\R}{{\mathbb R}}
\newcommand{\N}{{\mathbb N}}
\newcommand{\Z}{{\mathbb Z}}
\newcommand{\Q}{{\mathbb Q}}
\newcommand{\hh}{{\mathcal H}}
\newcommand{\id}{\mathrm{id}}
\DeclareMathOperator{\tr}{tr}
\newcommand{\ip}[1]{\langle#1\rangle}
\DeclareMathOperator{\Aut}{Aut}
\newcommand{\T}{\mathbb{T}}
\DeclareMathOperator{\diag}{diag}
\def\freeprod{\font\bigsymbolsfont=cmsy10 scaled \magstep3
 \setbox0=\hbox{\bigsymbolsfont\char'003 }\mathord{\lower1pt\box0}}\relax\ignorespaces
\newcommand{\Hawaii}{Hawai\kern.05em`\kern.05em\relax i}
\begin{document}

\title[Crossed products of irrational rotation algebras by cyclic groups]{Isomorphism and Morita equivalence classes for crossed products of irrational rotation algebras by cyclic subgroups of $SL_2(\Z)$} 


\author{Christian B\"onicke}
\address{Mathematisches Institut der WWU M\"unster,
	\newline Einsteinstrasse 62, 48149 M\"unster, Deutschland}
\email{christian.boenicke@uni-muenster.de}

\author{Sayan Chakraborty}
\address{Mathematisches Institut der WWU M\"unster,
	\newline Einsteinstrasse 62, 48149 M\"unster, Deutschland}
\email{sayan.c@uni-muenster.de}

\author{Zhuofeng He}
\address{Graduate School of Mathematical Sciences, The University of Tokyo,
	\newline 3-8-1 Komaba Meguro-ku Tokyo 153-8914, Japan}
\email{hzf@ms.u-tokyo.ac.jp}

\author{Hung-Chang Liao}
\address{Mathematisches Institut der WWU M\"unster,
	\newline Einsteinstrasse 62, 48149 M\"unster, Deutschland}
\email{liao@uni-muenster.de}

\thanks{MSC 2010: 46L35, 46L55, 46L80}
\thanks{\textit{Keywords: irrational rotation algebras, crossed products, classification of $C^*$-algebras, Morita equivalence}}

\date{Oct. 10, 2017}

\maketitle

\begin{abstract}
	Let $\theta, \theta'$ be irrational numbers and $A, B$ be matrices in $SL_2(\Z)$ of infinite order. We compute the $K$-theory of the crossed product $\mathcal{A}_\theta\rtimes_A \Z$ and show that $\mathcal{A}_{\theta} \rtimes_A\Z$ and $\mathcal{A}_{\theta'} \rtimes_B \Z$ are $*$-isomorphic  if and only if $\theta = \pm\theta' \pmod \Z$ and $I-A^{-1}$ is matrix equivalent to $I-B^{-1}$. Combining this result and an explicit construction of equivariant bimodules, we show that $\mathcal{A}_{\theta} \rtimes_A\Z$ and $\mathcal{A}_{\theta'} \rtimes_B \Z$ are Morita equivalent if and only if $\theta$ and $\theta'$ are in the same $GL_2(\Z)$ orbit and $I-A^{-1}$ is matrix equivalent to $I-B^{-1}$. Finally, we determine the Morita equivalence class of $\mathcal{A}_{\theta} \rtimes F$ for any finite subgroup $F$ of $SL_2(\Z)$.
\end{abstract}


\section{Introduction}
For a given irrational number $\theta$, let $\mathcal{A}_\theta$ denote the irrational rotation algebra, i.e., the universal $C^*$-algebra generated by unitaries $U_1$ and $U_2$ satsifying
$$
U_2U_1 = e^{2\pi i \theta}U_1 U_2.
$$
Watatani \cite{Wat} and Brenken \cite{Brenken} introduced an action of $SL_2(\Z)$ on $\mathcal{A}_{\theta}$ by sending a matrix
$$
A=
\begin{pmatrix}
  a&b\\
   c&d
\end{pmatrix}
$$
to the automorphism $\alpha_A$ of $\mathcal{A}_{\theta}$ defined by 
$$
\alpha_A(U_1) := e^{\pi i (ac) \theta}U_1^{a} U_2^{c},\;\;\;\;\;\; \alpha_A(U_2) := e^{\pi i (bd) \theta} U_1^{b} U_2^{d}.
$$
Let $A\in SL_2(\Z)$ be a matrix of infinite order, and consider the restriction of the above action to the (infinite cyclic) subgroup generated by $A$. For notational convenience we write the resulting crossed product by $\Z$ as $\mathcal{A}_\theta \rtimes_A \Z$.

Recall that by the work of Pimsner and Voiculescu \cite{PV80} and Rieffel \cite{Rie81} two irrational rotation algebras $\mathcal{A}_{\theta}$ and $\mathcal{A}_{\theta'}$ are isomorphic if and only if $\theta = \pm \theta' \pmod \Z$. Moreover, Rieffel showed in \cite{Rie81} that $\mathcal{A}_{\theta}$ and $\mathcal{A}_{\theta'}$ are Morita equivalent if and only if $\theta$ and $\theta'$ are in the same $GL_2(\Z)$ orbit, that is, $\theta' = \frac{a\theta + b}{c\theta + d}$ for some matrix $\begin{pmatrix}
a & b \\
c & d
\end{pmatrix}$ in $GL_2(\Z)$. In this paper we prove analogous results for the crossed product $\mathcal{A}_\theta \rtimes_A \Z$.  More precisely, we determine the isomorphism and Morita equivalence classes of these crossed products in terms of the angle $\theta$ and the matrix $A$ up to some canonical equivalence relations.

\begin{thm} \label{thm:1.1} [Theorem \ref{thm:main_iso}]\label{1}
	Let $\theta,\theta'$ be irrational numbers and $A,B\in SL_2(\Z)$ be matrices of infinite order. Then the following are equivalent:
	\begin{enumerate}
		\item $\mathcal{A}_{\theta} \rtimes_A \Z$ and $\mathcal{A}_{\theta'} \rtimes_B \Z$ are $*$-isomorphic;
		\item $\theta = \pm \theta' \pmod \Z$ and $P(I_2 - A^{-1})Q = I_2 - B^{-1}$ for some $P,Q$ in $GL_2(\Z)$.
	\end{enumerate}
\end{thm}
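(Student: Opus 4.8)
The plan is to establish $(2)\Rightarrow(1)$ directly and $(1)\Rightarrow(2)$ via $K$-theory together with the known classification of the underlying rotation algebras. For $(2)\Rightarrow(1)$, first observe that replacing $\theta'$ by $-\theta'$ only amounts to the flip automorphism of $\mathcal{A}_{\theta'}$, so we may assume $\theta=\theta'$. The condition $P(I_2-A^{-1})Q = I_2-B^{-1}$ is precisely the statement that the matrices $I_2-A^{-1}$ and $I_2-B^{-1}$ have the same Smith normal form, i.e., are \emph{matrix equivalent} in the sense of the abstract. I expect the paper to have set up (earlier, in the sections preceding this statement) a dictionary in which $\mathcal{A}_\theta\rtimes_A\Z$ depends, up to isomorphism, only on $\theta\bmod\Z$ and on the conjugacy class of $A$ in $SL_2(\Z)$ — but in fact only on a coarser invariant, namely the equivalence class of $I_2-A^{-1}$. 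The natural mechanism is that $\mathcal{A}_\theta\rtimes_A\Z$ is (a corner of, or Morita equivalent to) the transformation groupoid $C^*$-algebra of the $\Z$-action on the mapping torus / suspension, and a change of basis in $\Z^2$ by $P,Q\in GL_2(\Z)$ implements the desired isomorphism. So for $(2)\Rightarrow(1)$ I would exhibit explicitly an isomorphism built from such a change of coordinates, checking it intertwines the two crossed-product structures.

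For $(1)\Rightarrow(2)$, the main tool is the computation of $K_*(\mathcal{A}_\theta\rtimes_A\Z)$ promised at the start of the introduction, obtained via the Pimsner–Voiculescu sequence for the $\Z$-action generated by $\alpha_A$. Since $K_0(\mathcal{A}_\theta)\cong\Z^2$ and $K_1(\mathcal{A}_\theta)\cong\Z^2$, and the induced map $(\alpha_A)_*$ on $K$-theory is given (after identification) by the matrix $A$ acting on $\Z^2$ in each degree — here one uses that $\alpha_A$ is the Watatani–Brenken action, whose effect on $K_0$ and $K_1$ is computed from the formulas for $\alpha_A(U_1),\alpha_A(U_2)$ — the six-term sequence reads
\[
\cdots \to \Z^2 \xrightarrow{\,I_2 - A^{-1}\,} \Z^2 \to K_0(\mathcal{A}_\theta\rtimes_A\Z)\to \Z^2 \xrightarrow{\,I_2 - A^{-1}\,} \Z^2 \to \cdots,
\]
so that, writing $d=\det(I_2-A^{-1})=2-\tr(A)$ (nonzero since $A$ has infinite order and is not unipotent — or one handles the parabolic case separately), one gets $K_0 \cong \Z \oplus \mathrm{coker}(I_2-A^{-1})$ and similarly for $K_1$. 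Thus $\mathrm{coker}(I_2-A^{-1})$, a finitely generated abelian group of order $|d|$, is a $*$-isomorphism invariant; and a finitely generated abelian group presented by an integer matrix determines that matrix up to the equivalence $M\mapsto PMQ$ exactly when we also know it is $2\times 2$, i.e., the Smith normal form is recovered. This yields the condition on $I_2-A^{-1}$ in $(2)$.

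The remaining point — and what I expect to be the main obstacle — is extracting $\theta=\pm\theta'\bmod\Z$ from an isomorphism of the crossed products, since $K$-theory alone does not see $\theta$ (the ranges of traces, and the order structure, are where $\theta$ enters). The strategy is: an isomorphism $\mathcal{A}_\theta\rtimes_A\Z\cong\mathcal{A}_{\theta'}\rtimes_B\Z$ must respect the canonical trace, hence induces an affine isomorphism of trace simplices and a matching of the images of the traces on $K_0$; composing the $K$-theory of the inclusion $\mathcal{A}_\theta\hookrightarrow\mathcal{A}_\theta\rtimes_A\Z$ (whose image is, by the PV sequence, exactly $\mathrm{coker}(I_2-A^{-1})$ together with a distinguished $\Z$ summand coming from the class of $1$) with the unique trace, one recovers the subgroup $\Z+\theta\Z\subseteq\R$ as a $K$-theoretic invariant of the pair $(\mathcal{A}_\theta\rtimes_A\Z,\text{trace})$. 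Two such subgroups $\Z+\theta\Z$ and $\Z+\theta'\Z$ coincide iff $\theta=\pm\theta'\bmod\Z$, by the classical argument of Pimsner–Voiculescu and Rieffel. I would carry this out carefully, paying attention to whether the trace on the crossed product is unique (it is, when $\alpha_A$ is properly outer, which holds since $A$ has infinite order) and to the precise identification of the image of $K_0(\mathcal{A}_\theta)$ inside $K_0$ of the crossed product, as the sign ambiguity $\pm\theta'$ is exactly the residual freedom once the subgroup of $\R$ is pinned down.
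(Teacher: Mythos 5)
Your direction $(2)\Rightarrow(1)$ has a genuine gap. You propose to realize the isomorphism explicitly via a change of basis of $\Z^2$ implemented by $P,Q\in GL_2(\Z)$, but matrix equivalence of $I_2-A^{-1}$ and $I_2-B^{-1}$ is far weaker than conjugacy of $A$ and $B$ in $GL_2(\Z)$: only a conjugacy $QAQ^{-1}=B$ is implemented by an automorphism of $\mathcal{A}_\theta$ (or $\mathcal{A}_{-\theta}$) intertwining $\alpha_A$ and $\alpha_B$. Hypothesis (2) allows, for instance, $A=\begin{pmatrix}-1&1\\0&-1\end{pmatrix}$ and $B=\begin{pmatrix}2&1\\7&4\end{pmatrix}$ (both give Smith normal form $\diag(1,4)$ for $I_2-A^{-1}$, $I_2-B^{-1}$), whose traces are $-2$ and $6$; these matrices are not conjugate and the two $\Z$-actions are not conjugate, so no change of coordinates on the noncommutative torus or its mapping torus can produce the isomorphism. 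The paper's route is essentially forced: one first shows the action generated by $\alpha_A$ has the tracial Rokhlin property, so that $\mathcal{A}_\theta\rtimes_A\Z$ is simple, nuclear, monotracial, has tracial rank zero and satisfies the UCT, and then $(2)\Rightarrow(1)$ follows from Lin's classification of tracially AF algebras after matching the Elliott invariants — including the order on $K_0$, which is handled by the observation that a group isomorphism compatible with the unique traces is automatically an order isomorphism (weak unperforation). Your proposal never invokes any classification theorem, so this implication does not go through as described.

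There is also an error in your Pimsner--Voiculescu computation for $(1)\Rightarrow(2)$: the induced automorphism on $K_0(\mathcal{A}_\theta)$ is the identity (the trace is preserved and $\Z+\theta\Z$ admits no nontrivial unit-preserving order automorphism), so $\id-\alpha^{-1}_{*0}=0$; only on $K_1$ is the map given by $I_2-A^{-1}$. Hence the sequence splits as $K_0(\mathcal{A}_\theta\rtimes_A\Z)\cong\Z^2\oplus\ker(I_2-A^{-1})$, which is torsion free of rank $2$ (or $3$ when $\tr(A)=2$), while $K_1\cong\Z^2\oplus\mathrm{coker}(I_2-A^{-1})$; your formula $K_0\cong\Z\oplus\mathrm{coker}(I_2-A^{-1})$ is wrong both in rank and in where the torsion sits. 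The Smith normal form of $I_2-A^{-1}$ is indeed an invariant, but it is read off from $K_1$, and the statement that the unique trace maps $K_0$ of the crossed product onto exactly $\Z+\theta\Z$ requires identifying explicit generators of $K_0$ (in the parabolic case $\tr(A)=2$ one needs the extra projection $P_A$ of trace $1$), which is the substantial computation in the paper; your outline of how $\theta=\pm\theta'\pmod\Z$ is then extracted from the trace pairing and uniqueness of the trace is otherwise consistent with the paper's argument.
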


\begin{thm} \label{thm:1.2}[Theorem \ref{thm:main_morita}]
	Let $\theta, \theta'$ be irrational numbers and $A,B\in SL_2(\Z)$ be matrices of infinite order. Then the following are equivalent:
	\begin{enumerate}
		\item $\mathcal{A}_{\theta}\rtimes_A \Z$ and $\mathcal{A}_{\theta'} \rtimes_B \Z$ are Morita equivalent;
		\item $\theta$ and $\theta'$ are in the same $GL_2(\Z)$ orbit,  and $P(I-A^{-1})Q = I-B^{-1}$ for some $P,Q\in GL_2(\Z)$.
	\end{enumerate}
\end{thm}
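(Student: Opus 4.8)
The plan is to prove the two implications separately; $(2)\Rightarrow(1)$ carries the real work. For $(2)\Rightarrow(1)$ I would first use the $GL_2(\Z)$-orbit hypothesis to move the angle from $\theta$ to $\theta'$ by a $\Z$-equivariant Morita equivalence, landing inside a crossed product over the single angle $\theta'$, and then use the matrix hypothesis together with Theorem~\ref{thm:1.1} to conclude. Concretely: write $\theta'=g\cdot\theta$ for some $g\in GL_2(\Z)$ (acting by fractional linear transformations) and decompose $g$ into the standard moves $\theta\mapsto\theta+1$, $\theta\mapsto-\theta$, $\theta\mapsto-1/\theta$. The move $\theta\mapsto-\theta$ comes from the genuine $*$-isomorphism $\mathcal{A}_\theta\to\mathcal{A}_{-\theta}$, $U_1\mapsto U_1$, $U_2\mapsto U_2^{\ast}$; the move $\theta\mapsto\theta+1$ amounts to comparing the two cocycle conventions for $\mathcal{A}_\theta=\mathcal{A}_{\theta+1}$ and absorbing the resulting gauge automorphism into the $\Z$-action; and the essential move $\theta\mapsto-1/\theta$ is effected by taking Rieffel's imprimitivity bimodule between $\mathcal{A}_\theta$ and $\mathcal{A}_{-1/\theta}$, in its concrete Schwartz-space realization, and equipping it with an action of $\Z=\langle A\rangle$ intertwining $\alpha_A$ on $\mathcal{A}_\theta$ with $\alpha_{A'}$ on $\mathcal{A}_{-1/\theta}$, where $A'$ is a $GL_2(\Z)$-conjugate of $A$. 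Since a $\Z$-equivariant Morita equivalence of systems induces a Morita equivalence of the crossed products, running this construction of equivariant bimodules shows that $\mathcal{A}_\theta\rtimes_A\Z$ is Morita equivalent to $\mathcal{A}_{\theta'}\rtimes_{A'}\Z$ for some $A'\in SL_2(\Z)$ with $I-(A')^{-1}=P(I-A^{-1})P^{-1}$ for a $P\in GL_2(\Z)$. In particular $I-(A')^{-1}$ is matrix equivalent to $I-A^{-1}$, hence to $I-B^{-1}$ by hypothesis; and since $\theta'=\theta'\pmod{\Z}$ trivially, Theorem~\ref{thm:1.1} applied over the common angle $\theta'$ gives $\mathcal{A}_{\theta'}\rtimes_{A'}\Z\cong\mathcal{A}_{\theta'}\rtimes_B\Z$, whence $(1)$.

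For $(1)\Rightarrow(2)$ both conditions are read off from Morita-invariant data. The Pimsner--Voiculescu sequence \cite{PV80} for $\langle A\rangle$ acting on $\mathcal{A}_\theta$, combined with the facts that $\alpha_A$ induces the identity on $K_0(\mathcal{A}_\theta)\cong\Z^2$ (it preserves the faithful trace, which embeds $K_0(\mathcal{A}_\theta)$ into $\R$) and the automorphism $A$ on $K_1(\mathcal{A}_\theta)\cong\Z^2$ in the basis $[U_1],[U_2]$, produces split short exact sequences, hence $K_0(\mathcal{A}_\theta\rtimes_A\Z)\cong\Z^2\oplus\ker(I-A^{-1})$ and $K_1(\mathcal{A}_\theta\rtimes_A\Z)\cong\operatorname{coker}(I-A^{-1})\oplus\Z^2$. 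Thus $K_0$ is free of rank $4-\rank(I-A^{-1})$ and the torsion subgroup of $K_1$ is the torsion of $\operatorname{coker}(I-A^{-1})$; as a $2\times 2$ integer matrix is determined up to matrix equivalence by its rank together with the torsion of its cokernel, the isomorphism types of $K_0(\mathcal{A}_\theta\rtimes_A\Z)$ and $K_1(\mathcal{A}_\theta\rtimes_A\Z)$ — Morita invariants — determine the matrix equivalence class of $I-A^{-1}$, forcing $I-A^{-1}$ and $I-B^{-1}$ to be matrix equivalent. For the orbit condition, I would use that $\mathcal{A}_\theta\rtimes_A\Z$ is simple with a unique tracial state $\tau_A$ (a consequence of $A$ having infinite order, as in the proof of Theorem~\ref{thm:1.1}), whose pairing with $K_0$ has range $\Z+\theta\Z\subseteq\R$: on the $\Z^2$-summand it is the trace pairing of $\mathcal{A}_\theta$, while the $\ker(I-A^{-1})$-summand is an index class and pairs trivially. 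A Morita equivalence transports $\tau_A$ to a strictly positive multiple of $\tau_B$ under the induced isomorphism on $K_0$, so $\Z+\theta\Z=\lambda(\Z+\theta'\Z)$ for some $\lambda>0$; by Rieffel's criterion \cite{Rie81} — two rank-two subgroups of $\R$ agree up to a positive scalar exactly when the generating irrationals lie in one $GL_2(\Z)$-orbit — $\theta$ and $\theta'$ lie in the same $GL_2(\Z)$-orbit.

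The main obstacle is the equivariant refinement of Rieffel's bimodule in $(2)\Rightarrow(1)$: one must exhibit an explicit action of $\Z$ on the Schwartz-space model — in effect a metaplectic-type operator adapted to the Heisenberg data of the two tori — compatible with the Watatani--Brenken automorphisms, while keeping precise track of which $GL_2(\Z)$-conjugate $A'$ of $A$ is produced and of the (generally non-inner) gauge automorphisms that enter through the non-canonical cocycle representatives, so that the matrix $I-(A')^{-1}$ ends up matrix equivalent to $I-A^{-1}$ on the nose. Once the $K$-theory computation of $\mathcal{A}_\theta\rtimes_A\Z$ and the uniqueness of its trace are available, the reverse implication $(1)\Rightarrow(2)$ is comparatively routine.
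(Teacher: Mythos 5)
Your overall route is the paper's route: reduce the $GL_2(\Z)$-orbit hypothesis to elementary M\"obius moves, realize the inversion move by an equivariant structure on Rieffel's Heisenberg (Schwartz-space) bimodule so that an equivariant Morita equivalence passes to the crossed products, finish over a common angle with Theorem \ref{thm:main_iso}, and for the converse read the matrix equivalence of $I-A^{-1}$ and $I-B^{-1}$ off the Morita-invariant $K$-groups and the orbit condition off the unique trace via Rieffel's range-of-trace argument. The $(1)\Rightarrow(2)$ half of your proposal is essentially the paper's proof.

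The genuine gap is in $(2)\Rightarrow(1)$: the step you yourself flag as the ``main obstacle'' --- the existence of a $\Z$-action on $\mathcal{S}(\R)$ intertwining $\alpha_A$ with $\alpha_{A'}$ for some $GL_2(\Z)$-conjugate $A'$ of $A$ and compatible with both inner products --- is exactly the content that must be proved, and your proposal only asserts that ``running this construction'' works. The paper carries it out in Section 4: write $A$ as a word in the generators $J=\left(\begin{smallmatrix}0&1\\-1&0\end{smallmatrix}\right)$ and $P=\left(\begin{smallmatrix}1&0\\1&1\end{smallmatrix}\right)$ of $SL_2(\Z)$; define $S_J$ as the rescaled Fourier transform $S_J(f)(x)=\theta^{-1/2}\int_\R e(xy/\theta)f(y)\,dy$ and $S_P$ as multiplication by $e(-x^2/(2\theta))$; check on the generators $U_1,U_2,V_1,V_2$ that $S_W(f.U_\ell)=S_W(f).\alpha_W(U_\ell)$ and $S_W(V_\ell.f)=\beta_{W^{-1}}(V_\ell).S_W(f)$, and upgrade this to compatibility of the $\mathcal{A}^\infty$- and $\mathcal{B}^\infty$-valued inner products via their $L^2$ realization (Proposition \ref{prop:L2-trick}); finally, since every $W\in\{J^{\pm1},P^{\pm1}\}$ satisfies $W^{-1}=TWT$ with $T=\diag(-1,1)$, setting $S_A=S_{W_1}\cdots S_{W_n}$ twists the left action by exactly $TAT$, so $I-(TAT)^{-1}=T(I-A^{-1})T\sim_{eq}I-A^{-1}$ --- which settles the bookkeeping of ``which conjugate $A'$ is produced'' that you leave open. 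Without these operators and identities the implication is not established. A smaller inaccuracy in your $(1)\Rightarrow(2)$: you claim the summand of $K_0$ complementary to $i_*(K_0(\mathcal{A}_\theta))$ pairs to zero against the trace; what is actually needed (and proved as Theorem \ref{thm:K}) is that the range of the trace pairing is $\Z+\theta\Z$, and in the case $\tr(A)=2$ this rests on producing the explicit projection $P_A$ and computing $\tau_A(P_A)=1$, not on a formal ``index classes have trace zero'' principle.
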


Let $F$ be a finte subgroup of $SL_2(\Z)$, which is necessarily isomorphic to $\Z_k$ with $k=2,3,4$ or $6$. The crossed product $\mathcal{A}_{\theta}\rtimes_{\alpha}F$ and the fixed point algebra $\mathcal{A}_{\theta}^F$ have been studied by many authors, including  \cite{7,9,ELPW10,26,27,28,29,30,31,32,38,62,Wal00,73,74}. Our study is particularly motivated by the work \cite{ELPW10} of Echterhoff, L\"uck, Phillips, and Walters. They  showed that when $\theta$ is irrational, the crossed product $\mathcal{A}_{\theta}\rtimes_{\alpha}\Z_k$ is an AF algebra and the isomorphism class of $\mathcal{A}_{\theta}\rtimes_{\alpha}\Z_k$ is completely determined by $\theta$ and $k$. Theorem \ref{1} can be viewed as an analogue of this result. 

Let us briefly discuss the proof of Theorem \ref{1}. By \cite{ELPW10}, \cite{OP06b} and \cite{Lin07}, each such $\Z$-action on $\mathcal{A}_{\theta}$ has the tracial Rokhlin property, and the resulting crossed product is monotracial and classifiable. Therefore the main step is to compute the Elliott invariant.  We compute the $K$-theory using the Pimsner-Voiculescu exact sequence. To determine a set of explicit generators and their images under the induced map of the unique tracial state, we borrow ideas from \cite{Thesis}, where a similar computation was carried out for crossed products of $C(\T^2)$. The precise description of the Elliott invariant is given in the following theorem. See Section 2.1 for more on Smith normal forms, and Section 3 for definitions of the elements $p_\theta$ and $P_A$. 

\begin{thm} \label{thm:1.3} [Theorem \ref{thm:K}]
      Let $\theta$ be an irrational number and $A\in SL_2(\Z)$ be a matrix of infinite order. Write $\tau_A$ for the unique trace on $\mathcal{A}_{\theta}\rtimes_A \Z$. 	
	\begin{enumerate}
		\item If $\tr(A) = 2$ then $I_2-A^{-1}$ has the Smith normal form $\diag(h_1,0)$, and
		\begin{align*}
			K_0(\mathcal{A}_{\theta} \rtimes_A \Z) &\cong \Z\oplus \Z\oplus \Z, \\
			K_1(\mathcal{A}_{\theta} \rtimes_A \Z) &\cong \Z\oplus \Z \oplus \Z \oplus \Z_{h_1},\\
			(\tau_A)_*( K_0(\mathcal{A}_{\theta} \rtimes_A \Z) ) &= \Z + \theta\Z.
		\end{align*}
		A set of generators of $K_0(\mathcal{A}_{\theta} \rtimes_A \Z)$ is given by $[1]_0$, $i_*([p_\theta]_0)$, and $[P_A]_0$, and the images under the unique trace are $1$, $\theta$, and $1$, respectively. 
		\item If $\tr(A) \not\in \{0,\pm1,2\}$ then $I_2-A^{-1}$ has the Smith normal form $\diag(h_1,h_2)$, and
		\begin{align*}
			K_0(\mathcal{A}_{\theta} \rtimes_A \Z) &\cong \Z\oplus \Z, \\
			K_1(\mathcal{A}_{\theta} \rtimes_A \Z) &\cong \Z \oplus \Z \oplus \Z_{h_1} \oplus \Z_{h_2}, \\
			(\tau_A)_*( K_0(\mathcal{A}_{\theta} \rtimes_A \Z) ) &= \Z + \theta\Z.
		\end{align*}
	A set of generators of $K_0(\mathcal{A}_{\theta} \rtimes_A \Z)$ is given by $[1]_0$ and $i_*([p_\theta]_0)$, and the images under the unique trace are $1$ and $\theta$, respectively.
	\end{enumerate}
\end{thm}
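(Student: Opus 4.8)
The plan is to compute the $K$-theory of $\mathcal{A}_\theta \rtimes_A \Z$ via the Pimsner–Voiculescu six-term exact sequence applied to the single automorphism $\alpha_A$ acting on $\mathcal{A}_\theta$. Recall $K_0(\mathcal{A}_\theta) \cong \Z^2$ with generators $[1]_0$ and $[p_\theta]_0$ (a Rieffel projection of trace $\theta$), and $K_1(\mathcal{A}_\theta) \cong \Z^2$ generated by $[U_1]_1$ and $[U_2]_1$. The first step is to write down the matrices of $(\alpha_A)_*$ on $K_0$ and $K_1$ with respect to these bases. On $K_1$ one has $(\alpha_A)_* = (A^{-1})^{T}$ (or $A$ up to the usual transpose/inverse convention coming from the defining formulas $\alpha_A(U_1) = \cdots U_1^a U_2^c$), and on $K_0$ the action fixes $[1]_0$ and sends $[p_\theta]_0$ to itself modulo multiples of $[1]_0$, because $\alpha_A$ preserves the unique trace and hence acts trivially on $\tau_*(K_0) = \Z + \theta\Z$; so $(\id - (\alpha_A)_*)$ on $K_0$ is the zero map. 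The PV sequence then reads
\begin{equation*}
\xymatrix{
\Z^2 \ar[r]^{\id - (\alpha_A^{-1})_*} & \Z^2 \ar[r] & K_0(\mathcal{A}_\theta \rtimes_A \Z) \ar[r] & K_1(\mathcal{A}_\theta) \ar[d]^{\id - (\alpha_A^{-1})_*} \\
K_1(\mathcal{A}_\theta \rtimes_A \Z) \ar[u] & K_0(\mathcal{A}_\theta) \ar[l] & \Z^2 \ar[l] & \Z^2 \ar[l]
}
\end{equation*}
so that, since $\id - (\alpha_A)_*$ vanishes on $K_0(\mathcal{A}_\theta)$, one obtains the extensions
\begin{equation*}
0 \to \Z^2 \to K_0(\mathcal{A}_\theta \rtimes_A \Z) \to \ker(I_2 - A^{-1}) \to 0, \qquad
0 \to \operatorname{coker}(I_2 - A^{-1}) \to K_1(\mathcal{A}_\theta \rtimes_A \Z) \to \Z^2 \to 0,
\end{equation*}
where I write $I_2 - A^{-1}$ loosely for the map $\id - (\alpha_A)_*$ on $K_1(\mathcal{A}_\theta) \cong \Z^2$.

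**Second**, I would bring in the Smith normal form. Since $A$ has infinite order, $1$ is an eigenvalue of $A$ exactly when $\tr(A) = 2$, in which case $A$ is conjugate to a parabolic $\begin{pmatrix}1 & n \\ 0 & 1\end{pmatrix}$, so $\ker(I_2 - A^{-1})$ has rank $1$ and $I_2 - A^{-1}$ has Smith normal form $\diag(h_1, 0)$; when $\tr(A) \notin \{0, \pm 1, 2\}$ the matrix $I_2 - A^{-1}$ is invertible over $\Q$, so $\ker = 0$ and the Smith normal form is $\diag(h_1, h_2)$ with $h_1 h_2 = \det(I_2 - A^{-1}) = 2 - \tr(A) \ne 0$. (The excluded trace values $0, \pm 1$ correspond to finite order and are not in play.) Feeding these into the two exact sequences above gives $K_0 \cong \Z^2 \oplus \ker \cong \Z^3$ or $\Z^2$ respectively, and $K_1 \cong \Z^2 \oplus \operatorname{coker} \cong \Z^3 \oplus \Z_{h_1}$ or $\Z^2 \oplus \Z_{h_1} \oplus \Z_{h_2}$ — with the rank-$1$ free part of the cokernel in the parabolic case absorbed into the $\Z^2$, accounting for the $\Z^4$ in the statement. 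The extensions for $K_0$ split because the quotient $\ker(I_2 - A^{-1})$ is free.

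**Third**, and this is the part requiring the most care, I would pin down explicit generators and their traces. The inclusion $i \colon \mathcal{A}_\theta \hookrightarrow \mathcal{A}_\theta \rtimes_A \Z$ gives $i_*([1]_0) = [1]_0$ and $i_*([p_\theta]_0)$ in $K_0$ of the crossed product; since the trace $\tau_A$ restricts to the unique trace on $\mathcal{A}_\theta$, their traces are $1$ and $\theta$. These two classes generate the image of $i_* \colon K_0(\mathcal{A}_\theta) \to K_0(\mathcal{A}_\theta \rtimes_A \Z)$, which is precisely the $\Z^2$ subgroup appearing above — here one must check $i_*$ is injective, which follows because $\id - (\alpha_A)_* = 0$ on $K_0$ makes the relevant PV connecting map vanish. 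In the parabolic case the remaining generator is a projection $P_A$ lifting a generator of $\ker(I_2 - A^{-1}) \subseteq K_1(\mathcal{A}_\theta)$; to identify its trace I would adapt the construction from \cite{Thesis}, realizing $P_A$ concretely (as a Rieffel-type projection built from the implementing unitary of the $\Z$-action together with a function on the mapping torus / a Powers–Rieffel element), and compute $\tau_A(P_A)$ by an explicit integral, obtaining the value $1$. The key identity to verify is that $\tau_*(K_0(\mathcal{A}_\theta \rtimes_A \Z)) = \Z + \theta\Z$ — the inclusion $\supseteq$ is clear from $i_*$, and $\subseteq$ follows once we know $\tau_A(P_A) \in \Z + \theta\Z$ (indeed $= 1$), since the three classes generate.

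**The main obstacle** I anticipate is not the K-group computation — that is a mechanical PV argument once the functoriality of $(\alpha_A)_*$ is sorted out — but rather the explicit identification of the generator $P_A$ in the trace-$2$ case and the verification that $\tau_A(P_A) = 1$. This requires constructing an honest projection in (a matrix algebra over) $\mathcal{A}_\theta \rtimes_A \Z$ whose $K_1$-boundary hits the prescribed generator of $\ker(I_2 - A^{-1})$, which is where the borrowed techniques from \cite{Thesis} on crossed products of $C(\T^2)$ enter; transporting that construction across the (noncommutative) rotation algebra and checking the index/boundary computation is the delicate step. A secondary point to be careful about is bookkeeping of conventions: whether the relevant map in the PV sequence is $\id - (\alpha_A)_*$ or $\id - (\alpha_A^{-1})_*$, and correspondingly whether the invariant that appears is $I_2 - A$ or $I_2 - A^{-1}$ — the statement is phrased in terms of $A^{-1}$, so the conventions must be aligned to land there, and one should note that $I_2 - A$ and $I_2 - A^{-1}$ have the same Smith normal form anyway (they differ by multiplication by the unit $A \in GL_2(\Z)$), so the group-theoretic conclusions are convention-independent.
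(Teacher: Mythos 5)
Your proposal is correct and follows essentially the same route as the paper: the Pimsner--Voiculescu sequence with $\id-\alpha^{-1}_{*0}=0$ (no nontrivial order automorphism of $\Z+\theta\Z$), the resulting split short exact sequences analyzed via the Smith normal form of $I_2-A^{-1}$, and generators $[1]_0$, $i_*([p_\theta]_0)$ plus an extra projection in the $\tr(A)=2$ case adapted from the cited thesis. The one step you defer is filled in by the paper exactly as you anticipate: after conjugating $A$ in $SL_2(\Z)$ to a standard parabolic so that $U_1$ (or $U_2$) is fixed, one pushes the explicit projection over $C(\T^2)$ into $\mathcal{A}_\theta\rtimes_A\Z$ through the commuting pair $(U_1,w)$, uses naturality of the PV sequence to see its boundary is $[U_1]_1$, and computes its trace to be $1$ since the terms involving $U_1^{\pm1}$ and $w^{\pm1}$ die under $\tau_A$.
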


Now we turn to the proof of Theorem \ref{thm:1.2}. Combining our computation of the $K$-theory and the proof of Rieffel's original Morita equivalence result in \cite{Rie81}, we can see that if the crossed products $\mathcal{A}_\theta \rtimes_A \Z$ and $\mathcal{A}_{\theta'}\rtimes_B \Z$ are Morita equivalent, then $\theta$ and $\theta'$ are in the same $GL_2(\Z)$ orbit and $P(I-A^{-1})Q = I-B^{-1}$ for some $P,Q\in GL_2(\Z)$. The main ingredient of the reverse implication is a construction of suitable actions on the $\mathcal{A}_\theta$-$\mathcal{A}_{\frac{1}{\theta}}$-imprimitivity bimodule $\mathcal{S}(\R)$. This is done by studying the so-called \emph{metaplectic operators} on $\mathcal{S}(\R)$ (see Section 4). A similar argument also allows us to completely determine the Morita equivalence classes for crossed products of the form $\mathcal{A}_\theta \rtimes_\alpha F$ for any finite subgroup $F$ of $SL_2(\Z)$.

This paper is structured as follows. In Section 2 we recall various background materials related to irrational rotation algebras and the $SL_2(\Z)$-action on them. We also include a short discussion of the Smith normal form of an integral matrix. In Section 3 we compute the $K$-theory of the crossed product $\mathcal{A}_\theta\rtimes_A \Z$ (Theorem \ref{thm:1.3}) and prove Theorem \ref{thm:1.1}. Section 4 is devoted to the proof of Theorem \ref{thm:1.2}, and finally in Section 5 we determine the Morita equivalence classes of $\mathcal{A}_\theta \rtimes_\alpha F$ for any finite subgroup $F$ of $SL_2(\Z)$.

\ \newline
{\bf Acknowledgments}. Z. He thanks the staff and members of
the Mathematics Institute at WWU M\"unster for their help and hospitality
during the extended visit in summer 2017, during which this research was undertaken. We would like to thank Dominic Enders for suggesting an alternative proof of Theorem \ref{thm:mortia_finite}. We also like to thank Siegfried Echterhoff for helpful and enlightening discussions on the subject. 

C. B\"onicke, S. Chakraborty, and H-C. Liao were supported by \textit{Deutsche Forschungsgemeinschaft} (SFB 878). Z. He was supported by the FMSP program at the Graduate School of Mathematical Sciences of the University of Tokyo, and partially supported by \textit{Deutsche Forschungsgemeinschaft} (SFB 878).


\section{Preliminaries}

\subsection{Matrix equivalence and Smith normal form of integral matrices}
Here we only give the definitions and theorems needed for the paper. For a more comprehensive treatment of Smith normal forms, we refer the reader to \cite[Chapter 2]{Int}. 

Let $M_n(\Z)$ denote the set of $n$ by $n$ matrices with integer entries, and let $GL_n(\Z)$ be the group of elements in $M_n(\Z)$ with determinant $\pm1$.
\begin{defn}
	Let $A$ and $B$ be matrices in $M_n(\Z)$. We say \emph{$A$ is matrix equivalent to $B$}, written as $A\sim_{eq}B$, if there exist $P$ and $Q$ in $GL_n(\Z)$ such that 
	$$
	PAQ=B.
	$$ 
\end{defn}
It is easy to see that $\sim_{eq}$ is an equivalence relation on $M_n(\Z)$.
\begin{thm}\cite[Theorem II.9]{Int}
	Every matrix $A\in M_n(\Z)$ is matrix equivalent to a diagonal matrix 
	$$
	S(A)=\diag(h_1,h_2,\ldots,h_r,0,0,\ldots,0),
	$$
	where $r$ is the rank of $A$ and $h_1,h_2,\ldots,h_r$ are positive nonzero integers. 
	
	Moreover, the matrix $S(A)$ is unique subject to the condition that  $h_i$ divides $h_{i+1}$ for each $i=1,2,..., r-1$. In this case, $S(A)$ is called the \emph{Smith normal form} of $A$. 
\end{thm}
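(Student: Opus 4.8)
The plan is to prove existence by an explicit reduction using integral row and column operations, and uniqueness via the determinantal divisors.

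\textbf{Existence.} I would use the fact that $GL_n(\Z)$ is generated by the permutation matrices, the sign matrices $\diag(\pm 1,\dots,\pm 1)$, and the transvections $I_n+eE_{ij}$ with $i\ne j$ and $e\in\Z$; multiplying $A$ on the left or on the right by one of these performs an integral row or column operation, and the resulting matrix is again matrix equivalent to $A$. If $A=0$ there is nothing to prove. Otherwise, among all matrices equivalent to $A$ I would choose one — call it $A$ again — whose smallest nonzero entry in absolute value is as small as possible (this minimum exists, the values being positive integers), and after a permutation assume this entry sits in position $(1,1)$. Division with remainder, $a_{1j}=q_j a_{11}+r_j$ with $0\le r_j<|a_{11}|$, combined with the column operation subtracting $q_j$ times column $1$ from column $j$, replaces $a_{1j}$ by $r_j$; minimality of $|a_{11}|$ forces $r_j=0$, and similarly for the first column, so we may assume the first row and column vanish off the $(1,1)$ entry. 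If some entry $a_{ij}$ with $i,j\ge 2$ were not divisible by $a_{11}$, then adding row $i$ to row $1$ and repeating the previous step would produce a strictly smaller nonzero pivot, a contradiction; hence $a_{11}$ divides every entry, and $A$ is block diagonal $\diag(a_{11},A')$ with $a_{11}$ dividing all entries of $A'$. Applying the inductive hypothesis to $A'$ and absorbing signs gives the diagonal form with $h_1\mid h_2\mid\cdots\mid h_r$ and all $h_i>0$; and $r$, the number of nonzero diagonal entries, equals $\rank A$ because matrix equivalence preserves rank.

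\textbf{Uniqueness.} For $1\le k\le n$ let $d_k(A)$ be the greatest common divisor of all $k\times k$ minors of $A$, with $d_k(A)=0$ if every such minor vanishes, and put $d_0(A)=1$. The Cauchy--Binet formula expresses each $k\times k$ minor of $PAQ$ as a $\Z$-linear combination of $k\times k$ minors of $A$, so $d_k(A)\mid d_k(PAQ)$; applying this to $P^{-1}(PAQ)Q^{-1}=A$ gives the reverse divisibility, hence $d_k$ depends only on the matrix-equivalence class. For $S=\diag(h_1,\dots,h_r,0,\dots,0)$ with $h_1\mid\cdots\mid h_r$, a $k\times k$ submatrix has nonzero determinant only when the selected rows and columns coincide, in which case the determinant is $\prod_{i\in I}h_i$ for some $I\subseteq\{1,\dots,r\}$ with $|I|=k$; since the $j$-th smallest element of $I$ is at least $j$, we have $h_j\mid h_{i_j}$ and so $h_1\cdots h_k$ divides each such product, giving $d_k(S)=h_1\cdots h_k$ for $k\le r$ and $d_k(S)=0$ for $k>r$. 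Therefore $r$ is the largest index with $d_k(A)\ne 0$, so $r=\rank A$ is forced, and $h_k=d_k(A)/d_{k-1}(A)$ for $1\le k\le r$, which pins down each $h_i$ uniquely.

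\textbf{Main obstacle.} The existence algorithm is elementary, but the minimal-pivot bookkeeping needed to secure the \emph{full} divisibility chain $h_1\mid\cdots\mid h_r$ (rather than merely some diagonal form) is the delicate point; conceptually, the key input is the $GL_n(\Z)$-invariance of the determinantal divisors $d_k$ via Cauchy--Binet, which at once yields uniqueness of the $h_i$ and the identification $r=\rank A$.
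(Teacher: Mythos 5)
Your proof is correct. The paper does not prove this statement at all — it is quoted verbatim from the cited reference (Newman, \emph{Integral Matrices}, Theorem II.9) — and your argument (existence by the minimal-pivot Euclidean reduction with integral row and column operations, uniqueness and the identification $r=\rank A$ via the $GL_n(\Z)$-invariance of the determinantal divisors $d_k$ through Cauchy--Binet, so that $h_k=d_k/d_{k-1}$) is exactly the standard proof given in that source, so there is nothing to compare beyond noting that your write-up is complete and sound.
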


\begin{rem}
	Note that if $A$ has full rank and if the Smith normal form of $A$ is given by $\diag(h_1,h_2,\ldots,h_r)$, then we have $\vert \det(A)\vert=\prod_{i=1}^rh_i$.
\end{rem}

\subsection{Irrational rotation algebras}

Let $\theta\in \R\setminus \Q$.	The \emph{irrational rotation algebra}, or \emph{noncommutative 2-torus}, denoted by $\mathcal{A}_{\theta}$, is defined to be the universal $C^*$-algebra generated by two unitaries $U_1$ and $U_2$ satisfying the relation
$$
U_2U_1 = e^{2\pi i \theta}U_1 U_2.
$$

Recall that $\mathcal{A}_{\theta}$ is a unital, simple, separable, nuclear, monotracial $C^*$-algebra (see for example \cite{Dav}). We will write $\tau_\theta$ for the unique tracial state on $\mathcal{A}_{\theta}$. Since $\mathcal{A}_{\theta}$ is a simple AT $C^*$-algebra with real rank zero \cite{ElliottEvan}, by \cite{TAF} it has tracial rank zero in the sense of Lin. The $K$-theory of $\mathcal{A}_{\theta}$ was computed by Pimsner and Voiculescu in \cite{PV}, where they developed the celebrated Pimsner-Voiculescu six-term exact sequence for crossed products by integers. Here we summarize the $K$-theoretic data of $\mathcal{A}_{\theta}$:
\begin{itemize}
	\item $K_0(\mathcal{A}_{\theta}) \cong \Z^2$ with generators $[1]_0$ and $[p_\theta]_0$, where $p_\theta$ is a projection in $\mathcal{A}_{\theta}$ satisfying $\tau_\theta(p_\theta) = \theta$ (this is the so-called \emph{Rieffel projection}; see \cite[Theorem 1]{Rie81}).
	\item $K_1(\mathcal{A}_{\theta})\cong \Z^2$ with generators $[U_1]_1$ and $[U_2]_1$.
	\item $(\tau_\theta)_*:K_0(\mathcal{A}_{\theta})\to \R$ is an order isomorphism onto $\Z+\theta\Z$.
\end{itemize}
Note that the last item implies, among other things, that any automorphism $\alpha$ on $\mathcal{A}_{\theta}$ induces the identity map at the level of $K_0(\mathcal{A}_{\theta})$ (since there is no nontrivial unit-preserving order automorphism of $\Z+\theta\Z$).

\subsection{Twisted group $C^*$-algebras}

Let us consider $\mathcal{A}_{\theta}$ as a twisted group $C^*$-algebra. This picture of $\mathcal{A}_\theta$ will be useful when we discuss Morita equivalence classes of crossed products. For the following discussion we restrict our attention to discrete groups. Recall that a \emph{2-cocycle} on a discrete group $G$ is a function $\omega:G\times G\to \T$
satisfying
$$
\omega(x,y)\omega(xy,z) = \omega(x, yz)\omega(y,z)
$$
and
$$
\omega(x,1) = 1 = \omega(1,x)
$$
for all $x,y,z$ in $G$. We equip the Banach space $\ell^1(G)$ with the multiplication
$$
(f *_\omega g)(x) := \sum_{y\in G} f(y) g(y^{-1}x)\omega(y, y^{-1}x)\;\;\;\;\;\; (f,g\in \ell^1(G),\;\; x\in G)
$$
and the involution
$$
f^*(x) := \overline{ \omega(x,x^{-1})f(x^{-1}) }\;\;\;\;\;\; (f\in \ell^1(G).\;\; x\in G). 
$$
Then $\ell^1(G)$ becomes a Banach $*$-algebra. For clarity we write $\ell^1(G,\omega)$ for the resulting Banach *-algebra. As in the case of group $C^*$-algebras, we complete $\ell^1(G,\omega)$ with respect to the norm coming from the ``regular representation''. Recall that for a given 2-cocycle $\omega$ on $G$, an \emph{$\omega$-representation} of $G$ on a Hilbert space $\hh$ is a map
	$$
	V:G\to \mathcal{U}(\hh)
	$$
	satisfying
	$$
	V(x)V(y) = \omega(x,y)V(xy)
	$$
	for all $x,y\in G$. Note that every $\omega$-representation $V:G\to \mathcal{U}(\hh)$ can be promoted to a $*$-homomorphism $V:\ell^1(G,\omega)\to B(\hh)$ via the formula
$$
V(f) := \sum_{x\in G} f(x)V(x).
$$
Now consider the map $L_\omega:G\to \mathcal{U}(\ell^2(G))$ defined by
$$
[L_\omega(x)\xi ](y) := \omega(x,x^{-1}y) \xi(x^{-1}y)\;\;\;\;\;\; (\xi\in \ell^2(G),\;\; x,y\in G).
$$
A direct computation shows that $L_\omega$ is an $\omega$-representation of $G$, called the \emph{regular $\omega$-representation}. The \emph{reduced twisted group $C^*$-algebra}, written as $C^*_r(G,\omega)$, is defined to be the completion of $\ell^1(G,\omega)$ with respect to the norm $\| f \|_r := \| L_\omega(f) \|$.

Let $\theta$ be an irrational number. We may identify $\theta$ with the real $2\times 2$ skew-symmetric matrix $\begin{pmatrix}
0 & -\theta \\
\theta & 0
\end{pmatrix}$. Define a 2-cocycle $\omega_{\theta}:\Z^2\times \Z^2 \to \T$ by
$$
\omega_{\theta}(x,y) := e^{-\pi i \ip{ \theta x, y }}.
$$
Then there is a $*$-isomorphism $\mathcal{A}_{\theta}\to C^*_r(\Z^2,\omega_{\theta})$ sending $U_1$ to $\delta_{e_1}$ and $U_2$ to $\delta_{e_2}$, where $\{e_1,e_2\}$ is the standard basis for $\Z^2$.

\subsection{Actions of $SL_2(\Z)$ on irrational rotation algebras}

Let $SL_2(\Z)$ be the group of $2\times 2$ integer-valued matrices with determinant 1. For each matrix $A = \begin{pmatrix}
a & b \\
c & d
\end{pmatrix}$ in $SL_2(\Z)$, define an automorphism $\alpha_A:\mathcal{A}_{\theta}\to \mathcal{A}_{\theta}$ by declaring
$$
\alpha_A(U_1) := e^{\pi i (ac) \theta}U_1^a U_2^c,\;\;\;\;\;\; \alpha_A(U_2) := e^{\pi i (bd) \theta} U_1^b U_2^d.
$$
Note that the commutation relation holds because $A$ has determinant 1, and the scalars are there to ensure that the map $\alpha : SL_2(\Z) \to \Aut(\mathcal{A}_{\theta})$ sending $A$ to $\alpha_A$ is indeed a group homomorphism.

For each $A\in SL_2(\Z)$ we consider the $\Z$-action on $\mathcal{A}_{\theta}$ generated by $\alpha_A$. Throughout the paper, for the ease of notation we write $\mathcal{A}_{\theta} \rtimes_A\Z$ for the resulting crossed product.

\begin{lem} \label{lem:tracial-Rok}
	Let $\theta\in \R\setminus \Q$ and $A\in SL_2(\Z)$ be a matrix of infinite order. Then the $\Z$-action generated by $\alpha_A$ has the tracial Rokhlin property.
\end{lem}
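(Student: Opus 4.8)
The plan is to obtain the lemma from the general existence theorem for the tracial Rokhlin property: an automorphism $\beta$ of a unital, simple, separable, monotracial $C^*$-algebra $D$ with tracial rank zero has the tracial Rokhlin property as soon as $\beta^n$ is outer for every $n\in\Z\setminus\{0\}$. This is the shape in which the property is verified for finite-group actions in \cite{ELPW10}, and the corresponding statement for $\Z$-actions can be extracted from \cite{OP06b} and \cite{Lin07}. Since $\mathcal{A}_{\theta}$ is unital, simple, separable, monotracial and of tracial rank zero (as recalled above), the whole lemma reduces to a single claim: $\alpha_A^n=\alpha_{A^n}$ is outer for every $n\neq 0$.

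To prove this claim I would argue via $K$-theory. As noted above, every automorphism of $\mathcal{A}_{\theta}$ acts as the identity on $K_0(\mathcal{A}_{\theta})$, so $K_0$ is of no use; one instead computes the action on $K_1(\mathcal{A}_{\theta})\cong\Z^2$ in the basis $[U_1]_1,[U_2]_1$. From the defining formulas $\alpha_A(U_1)=e^{\pi i(ac)\theta}U_1^aU_2^c$ and $\alpha_A(U_2)=e^{\pi i(bd)\theta}U_1^bU_2^d$, using that unimodular scalars and the non-commutativity of $U_1,U_2$ do not affect $K_1$-classes, one gets $[\alpha_A(U_1)]_1=a[U_1]_1+c[U_2]_1$ and $[\alpha_A(U_2)]_1=b[U_1]_1+d[U_2]_1$, i.e.\ $(\alpha_A)_{*1}$ is the matrix $A$ and $(\alpha_A^n)_{*1}=A^n$. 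An inner automorphism induces the identity on $K$-theory, so if $\alpha_A^n$ were inner we would have $A^n=I_2$, contradicting that $A$ has infinite order. Hence $\alpha_A^n$ is outer for every $n\neq 0$, and the lemma follows.

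If the criterion one cites is phrased with a stronger notion of outerness — say, that the extension $\bar\alpha_A^n$ to the weak closure $\pi_{\tau_\theta}(\mathcal{A}_{\theta})''\cong R$ (the hyperfinite $\mathrm{II}_1$ factor, as $\theta$ is irrational) be outer — then only a short supplement is needed. Writing $\mathcal{A}_{\theta}=C^*_r(\Z^2,\omega_{\theta})$, the automorphism $\bar\alpha_A$ is, up to unimodular scalars, the one induced by $A$ acting on $\Z^2$, and $R\rtimes_{\bar\alpha_A^n}\Z\cong L(\Z^2\rtimes_{A^n}\Z,\tilde\omega_{\theta})$ for a suitable extension $\tilde\omega_{\theta}$ of $\omega_{\theta}$. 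If $|\tr(A)|>2$ then $A^n$ acts on $\Z^2\setminus\{0\}$ with infinite orbits, and an $\ell^2$-expansion of a hypothetical implementing unitary forces it to be a scalar, a contradiction; if $|\tr(A)|=2$ (the remaining infinite-order case) then $\Z^2\rtimes_{A^n}\Z$ is a twisted Heisenberg-type group on whose center the cocycle $\tilde\omega_{\theta}$ is non-degenerate precisely because $\theta$ is irrational, so $R\rtimes_{\bar\alpha_A^n}\Z$ is a factor — whereas it would be $R\vt L(\Z)$, which is not a factor, if $\bar\alpha_A^n$ were inner. In all cases $\bar\alpha_A^n$ is outer. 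I do not expect a substantial obstacle anywhere: the only point requiring real care is to cite the existence theorem in exactly the form needed — this is where the hypothesis ``tracial rank zero'' is used essentially — and, if that form demands it, to include the brief dichotomy just sketched; neither is genuinely hard.
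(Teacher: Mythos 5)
Your overall strategy coincides with the paper's: reduce the lemma to outerness of all nonzero powers of $\alpha_A$ and then invoke the criterion of Osaka--Phillips \cite[Theorem 2.17]{OP06b} for simple, separable, unital, monotracial $C^*$-algebras of tracial rank zero. One important caveat: that criterion requires more than outerness of $\alpha_A^n$ as an automorphism of $\mathcal{A}_\theta$; it requires that the extension of $\alpha_A^n$ to the weak closure $\pi_{\tau_\theta}(\mathcal{A}_\theta)''$ (the hyperfinite $\mathrm{II}_1$ factor) be outer. So your first, $K_1$-based argument --- which only rules out innerness at the $C^*$-level --- is not by itself sufficient, and the ``supplement'' you offer conditionally is in fact the heart of the matter. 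The paper disposes of exactly this point by citing \cite[Lemma 5.10]{ELPW10}, whereas you prove it directly; that is the only genuine difference between the two routes, and yours buys self-containedness at the cost of a page of von Neumann algebra bookkeeping. Your direct argument does work, and is in fact simpler than you make it: since $\alpha_A(U_\ell)=U_{A\ell}$ (Proposition \ref{prop:action}), a hypothetical implementing unitary $u=\sum_k c_k U_k$ (an $L^2$-expansion in the trace representation) satisfies $uU_\ell=U_{A^n\ell}u$, and comparing coefficients gives $|c_k|=|c_{k+(I-A^n)\ell}|$ for all $k,\ell\in\Z^2$; because $(I-A^n)\Z^2$ is an infinite subgroup whenever $A^n\neq I_2$ --- which holds for every $n\neq0$ since $A$ has infinite order --- square-summability forces $u=0$ in every case. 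Thus the trace dichotomy and the twisted-Heisenberg factoriality argument for $\tr(A)=\pm2$ are unnecessary; moreover, as phrased, ``the cocycle is non-degenerate on the center'' is loose (every $2$-cocycle on $\Z$ is trivial), and what that case really needs is that nontrivial central elements fail to be $\tilde\omega_\theta$-regular, which is where irrationality of $\theta$ enters.
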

\begin{proof}
	By \cite[Lemma 5.10]{ELPW10}, the extension of the automorphism $(\alpha_A)^n$ $(n\neq 0)$ to the weak closure of $\mathcal{A}_{\theta}$ in the tracial GNS representation is outer. Since $\mathcal{A}_{\theta}$ is a simple separable unital monotracial C*-algebra with tracial rank
	zero, the statement follows from \cite[Theorem 2.17]{OP06b}.
\end{proof}

\begin{thm} \label{thm:structure}
	Let $\theta\in \R\setminus \Q$ and $A\in SL_2(\Z)$ be a matrice of infinite order. Then $\mathcal{A}_{\theta}\rtimes_A\Z$ is a unital, simple, separable, nuclear, monotracial $C^*$-algebra with tracial rank zero and satisfies the UCT.
\end{thm}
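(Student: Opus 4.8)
The plan is to assemble the conclusion from a chain of permanence properties of crossed products together with the tracial Rokhlin property established in Lemma \ref{lem:tracial-Rok}. First I would recall that $\mathcal{A}_\theta$ itself is unital, simple, separable, nuclear, monotracial (with unique trace $\tau_\theta$), has tracial rank zero, and satisfies the UCT, as summarised in Section 2.2. Unitality and separability of $\mathcal{A}_\theta \rtimes_A \Z$ are immediate since $\Z$ is a countable discrete group and $\mathcal{A}_\theta$ is unital and separable. Nuclearity passes to the crossed product because $\Z$ is amenable, and the UCT is preserved under crossed products by $\Z$ via the Pimsner--Voiculescu sequence together with the five lemma (or, more conceptually, by the Baum--Connes machinery for $\Z$). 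Simplicity of $\mathcal{A}_\theta \rtimes_A \Z$ follows from the fact that $A$ has infinite order: by \cite[Lemma 5.10]{ELPW10} every nonzero power of $\alpha_A$ is outer on the tracial GNS closure, hence the action is (properly) outer on the simple $C^*$-algebra $\mathcal{A}_\theta$, and outer actions of $\Z$ on simple $C^*$-algebras yield simple crossed products.

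Next I would pin down the trace and the tracial rank. Monotracility: any tracial state on $\mathcal{A}_\theta \rtimes_A \Z$ restricts to an $\alpha_A$-invariant tracial state on $\mathcal{A}_\theta$, which must be $\tau_\theta$ since $\mathcal{A}_\theta$ is monotracial; conversely $\tau_\theta$ is $\alpha_A$-invariant (this is standard, and also follows because $\alpha_A$ induces the identity on $K_0$ and $(\tau_\theta)_*$ is injective on $K_0$, so $\tau_\theta \circ \alpha_A$ and $\tau_\theta$ agree on $K_0$, hence as traces by uniqueness), and it extends to a trace on the crossed product via the canonical conditional expectation $E\colon \mathcal{A}_\theta \rtimes_A \Z \to \mathcal{A}_\theta$. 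So the crossed product has exactly one tracial state, namely $\tau_A := \tau_\theta \circ E$. For tracial rank zero: by Lemma \ref{lem:tracial-Rok} the $\Z$-action has the tracial Rokhlin property, and $\mathcal{A}_\theta$ is a simple separable unital $C^*$-algebra with tracial rank zero; hence by the main result of \cite{OP06b} (specifically \cite[Theorem 2.6]{OP06b}, building on Lin's work in \cite{Lin07}) the crossed product $\mathcal{A}_\theta \rtimes_A \Z$ again has tracial rank zero.

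I expect the only genuinely substantive input to be the tracial-rank-zero conclusion, and that has already been reduced to citing \cite{OP06b}, \cite{Lin07}, and \cite{ELPW10} via Lemma \ref{lem:tracial-Rok}; everything else is a routine bookkeeping of permanence properties (amenability $\Rightarrow$ nuclearity; outerness $\Rightarrow$ simplicity; Pimsner--Voiculescu $\Rightarrow$ UCT; invariance of the unique trace $\Rightarrow$ monotracility). Thus the proof reduces to one sentence for each listed property with the appropriate citation, the ``hard part'' having been outsourced to the classification-theoretic machinery quoted in the preliminaries. One cosmetic point worth flagging: since $\mathcal{A}_\theta \rtimes_A \Z$ is now seen to be a unital separable simple nuclear monotracial $C^*$-algebra with tracial rank zero satisfying the UCT, it is classifiable by the Elliott invariant (Lin's theorem), which is exactly the launching pad for the $K$-theory computation in Theorem \ref{thm:1.3} and the isomorphism result Theorem \ref{thm:1.1}; I would not prove this here but it motivates the statement.
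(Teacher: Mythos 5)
Your proposal follows essentially the same route as the paper's proof: nuclearity from amenability of $\Z$ (the paper quotes \cite[Theorem 4.2.4]{BrOz}), simplicity from outerness of the nonzero powers of $\alpha_A$ via Kishimoto's theorem (the paper cites \cite[Theorem 3.1]{Kis}), tracial rank zero from the tracial Rokhlin property of Lemma \ref{lem:tracial-Rok} together with the Lin/Osaka--Phillips permanence result (the paper cites \cite[Theorem 3.16]{Lin07} at this step), and closure of the UCT class under crossed products by $\Z$ (the paper cites \cite[Proposition 2.7]{RoScho}). On all of these points your argument is correct and matches the paper, modulo which reference is invoked.

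The one place where your write-up has a genuine gap is monotraciality. You argue that every tracial state on $\mathcal{A}_\theta\rtimes_A\Z$ restricts to the unique $\alpha_A$-invariant trace $\tau_\theta$ on $\mathcal{A}_\theta$, that $\tau_\theta$ extends via the conditional expectation $E$, and then conclude that $\tau_\theta\circ E$ is the only trace. But a trace on the crossed product is not determined by its restriction to $\mathcal{A}_\theta$: one must also show that every trace $\tau'$ kills the off-diagonal parts, i.e. $\tau'(aw^n)=0$ for all $a\in\mathcal{A}_\theta$ and $n\neq 0$, so that $\tau'$ factors through $E$. (For an inner action the crossed product is $\mathcal{A}_\theta\otimes C^*(\Z)$, which has many traces all restricting to $\tau_\theta$, so some hypothesis beyond invariance is genuinely needed.) The missing ingredient is exactly the weak outerness from \cite[Lemma 5.10]{ELPW10}: since $(\alpha_A)^n$ extends to an outer automorphism of the tracial GNS weak closure for every $n\neq 0$, each functional $a\mapsto\tau'(aw^n)$, which by the trace property is an $\alpha_A^n$-twisted trace, must vanish (equivalently, the von Neumann algebra crossed product is a factor), whence $\tau'=\tau_\theta\circ E$. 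The paper leaves monotraciality implicit in its proof of this theorem, so this is the only substantive point to repair; the rest of your argument is sound.
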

\begin{proof}
It is well-known that every crossed product of a nuclear $C^*$-algebra by the integers is nuclear (see for example \cite[Theorem 4.2.4]{BrOz}). We have seen in the proof of Lemma \ref{lem:tracial-Rok} that the action generated by $\alpha_A$ is outer. Therefore simplicity follows from \cite[Theorem 3.1]{Kis}. The fact that $\mathcal{A}_\theta\rtimes_A\Z$ has tracial rank zero is a direct consequence of \cite[Theorem 3.16]{Lin07}.
Finally, by \cite[Proposition 2.7]{RoScho} the UCT class is closed under taking crossed products by integers. 
\end{proof}

\begin{rem}
For any matrix $A\in SL_2(\Z)$, the characteristic polynomial of $A$ is given by 
$$p(\lambda)=\lambda^2-\tr(A)\lambda+1.$$
It follows from the Cayley-Hamilton theorem that if $\tr(A)=\{0, \pm1\}$ then $A$ has finite order.
\end{rem}

In the present paper, when we consider $\Z$-actions we only allow matrices of infinite order. In particular we exclude the cases $\tr(A) \in \{0, \pm1\}$ and $A=\pm I_2$. 

Finally, in order to determine the Morita equivalence classes of these crossed products, it is important for us to understand how the action looks like in the twisted group $C^*$-algebra picture. This answer is given by the following proposition:

\begin{prop} \cite[p. 185]{ELPW10} \label{prop:action}
	Let $\alpha:SL_2(\Z)\curvearrowright \mathcal{A}_\theta$ be the canonical action. Then for any $A$ in $SL_2(\Z)$, $f\in \ell^1(\Z^2, \omega_{\theta})$, and $\ell \in \Z^2$, the action is given by
	$$
	(A.f)(\ell) := f( A^{-1} \ell ).
	$$
	In particular, if we write $U_\ell := \delta_\ell$ for $\ell\in \Z^2$, then $\alpha_A( U_\ell  ) = U_{A.\ell}$.
\end{prop}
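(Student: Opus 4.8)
The plan is to verify directly from the definitions that the formula $(A.f)(\ell) := f(A^{-1}\ell)$ defines an action of $SL_2(\Z)$ on $\ell^1(\Z^2,\omega_\theta)$ by $*$-algebra automorphisms, and then to check that under the $*$-isomorphism $\mathcal{A}_\theta \xrightarrow{\sim} C^*_r(\Z^2,\omega_\theta)$ sending $U_1 \mapsto \delta_{e_1}$, $U_2 \mapsto \delta_{e_2}$, this action is carried to the canonical action $\alpha$. First I would record that, since $A \in SL_2(\Z)$ preserves the standard symplectic form $\ip{\theta x, y}$ on $\Z^2$ (because $\det A = 1$), one has $\omega_\theta(A\ell, Am) = \omega_\theta(\ell, m)$ for all $\ell, m \in \Z^2$; this is the key compatibility that makes the substitution $\ell \mapsto A^{-1}\ell$ respect the twisted convolution and involution. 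A direct reindexing of the sum defining $*_\omega$ then shows $A.(f *_\omega g) = (A.f) *_\omega (A.g)$, and similarly $A.(f^*) = (A.f)^*$; the map is isometric on $\ell^1$ since it merely permutes the coordinates, and it extends to $C^*_r(\Z^2,\omega_\theta)$ because conjugation by the unitary $W_A \in \mathcal{U}(\ell^2(\Z^2))$, $(W_A\xi)(\ell) := \xi(A^{-1}\ell)$, intertwines the regular $\omega_\theta$-representation with itself up to this substitution (again using cocycle invariance).

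Next I would identify this action with $\alpha$. Writing $U_\ell := \delta_\ell$, the formula $(A.f)(\ell) = f(A^{-1}\ell)$ immediately gives $A.U_m = U_{Am}$ for each $m \in \Z^2$, which is the last assertion of the proposition. It then remains to match this with $\alpha_A$ on the generators $U_1 = U_{e_1}$ and $U_2 = U_{e_2}$. For this I would compare $U_{e_1}U_{e_2}\cdots$ products: in $C^*_r(\Z^2,\omega_\theta)$ one has $U_m U_n = \omega_\theta(m,n) U_{m+n}$, so $U_1^a U_2^c = \lambda\, U_{(a,c)}$ for an explicit scalar $\lambda$ obtained by iterating the cocycle, and a short computation identifies $\lambda$ with $e^{-\pi i (ac)\theta}$. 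Hence $\alpha_A(U_1) = e^{\pi i (ac)\theta} U_1^a U_2^c = U_{(a,c)} = U_{Ae_1} = A.U_1$, and likewise $\alpha_A(U_2) = A.U_2$; since both are $*$-homomorphisms agreeing on the generators, they coincide. (Alternatively, one cites that the scalars in the definition of $\alpha_A$ were inserted precisely so that $\alpha_A(U_m) = U_{Am}$, which is exactly what the substitution action does.)

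The argument is essentially a bookkeeping exercise, so I do not expect a serious obstacle; the one point requiring care is the cocycle identity $\omega_\theta(A\ell, Am) = \omega_\theta(\ell, m)$ and, relatedly, keeping track of the scalar $\lambda$ that appears when rewriting $U_1^a U_2^c$ as a multiple of $U_{(a,c)}$ — this is where the specific normalizing constants $e^{\pi i(ac)\theta}$ and $e^{\pi i(bd)\theta}$ in the definition of $\alpha_A$ get used, and it is the only place where an arithmetic slip could occur. Everything else (isometry on $\ell^1$, extension to the reduced $C^*$-completion, the homomorphism property) follows formally once this compatibility is in hand. In fact, as the cited reference \cite[p.~185]{ELPW10} already contains this computation, in the write-up I would keep it brief and merely indicate the cocycle-invariance step as the crux.
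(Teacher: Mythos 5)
Your proposal is correct: the paper itself gives no proof of this proposition (it is simply quoted from \cite[p.~185]{ELPW10}), and your direct verification is the standard argument one would supply --- the cocycle invariance $\omega_\theta(A\ell,Am)=\omega_\theta(\ell,m)$ (valid since $\det A=1$ makes $A$ preserve the symplectic form defining $\omega_\theta$), the reindexing of the twisted convolution and involution, the extension to $C^*_r(\Z^2,\omega_\theta)$ via the unitary $W_A$, and the scalar bookkeeping $U_1^aU_2^c=e^{-\pi i(ac)\theta}U_{(a,c)}$ which shows the normalizing constants in $\alpha_A$ were chosen exactly so that $\alpha_A(U_{e_1})=U_{Ae_1}$ and $\alpha_A(U_{e_2})=U_{Ae_2}$, whence the two $*$-homomorphisms agree on generators and hence everywhere. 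All the computations check against the paper's conventions (compare the identity $U_\ell=e(mn\theta/2)U_1^mU_2^n$ used in the proof of Proposition \ref{prop:module_action}), so there is no gap.
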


In what follows we will use the notation $U_\ell$ as in Proposition \ref{prop:action}. However, we continue to use $U_1$ and $U_2$ for the canonical generators as in the introduction. In other words, we have 
$$
U_1 = U_{\begin{pmatrix}
1 \\ 0
\end{pmatrix} }\;\;\;\;\;\; \text{and}\;\;\;\;\;\; U_2 = U_{\begin{pmatrix}
	0 \\ 1
	\end{pmatrix} }.
$$

\subsection{Rieffel's Heisenberg equivalence bimodules}

We recall Rieffel's construction of Heisenberg equivalence bimodule, which connects $\mathcal{A}_{\theta}$ and $\mathcal{A}_{\frac{1}{\theta}}$. Again this is needed when we construct equivalence bimodules between crossed products and determine the Morita equivalence classes.  The exposition and the formulas below largely follow  \cite{ELPW10}. Throughout the paper we write $e(x) := e^{2\pi i x}$. Let $\mathcal{S}(\R)$ be the linear space consisting of all smooth and rapidly decreasing complex-valued functions on $\R$ (i.e., the \emph{Schwartz space}). Let $\mathcal{A}^\infty := \mathcal{S}(\Z^2,\omega_{\theta})$ be the dense *-subalgebra of $C^*_r(\Z^2,\omega_\theta)\cong \mathcal{A}_{\theta}$ consisting of all rapidly decreasing functions on $\Z^2$. To be more precise,
$$
\mathcal{S}(\Z^2, \omega_\theta) := \left\lbrace f\in \ell^1(\Z^2, \omega_\theta):\sup_{m,n\in \Z} (1+m^2+n^2)^k | f(m,n) | < \infty \text{ for all } k\in \N   \right\rbrace.
$$
Define a right-action of $\mathcal{A}^\infty$ on $\mathcal{S}(\R)$ by setting
$$
(f.U_1)(x) := f(x+\theta)\;\;\;\;\;\; \text{and} \;\;\;\;\;\;(f.U_2)(x) = e(x)f(x).
$$
Similarly let $\mathcal{B}^\infty := \mathcal{S}(\Z^2, \omega_{  \frac{1}{\theta} }  )$ and define a left-action of $\mathcal{B}^\infty$ on $\mathcal{S}(\R)$ by
$$
(V_1.f)(x) := f(x+1)\;\;\;\;\;\; \text{and} \;\;\;\;\;\; (V_2.f)(x) = e(-x/\theta  )f(x).
$$
It will be convenient to have a concrete formula for the actions of $U_\ell$ and $V_\ell$ on $\mathcal{S}(\R)$ for all $\ell\in \Z^2$.

\begin{prop} \label{prop:module_action}
	For all $f\in \mathcal{S}(\R)$ and all $\ell = \begin{pmatrix}
	m \\ n
	\end{pmatrix} \in \Z^2$, we have
	$$
	(f.U_\ell)(x) = e(mn\theta/2)e(nx)f(x+m\theta)
	$$
	and
	$$
	(V_\ell.f)(x) = e(-mn/(2\theta)) e(-nx/\theta)f(x+m)
	$$
\end{prop}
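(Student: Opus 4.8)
The plan is to prove both formulas by induction, reducing the general $U_\ell$ (resp. $V_\ell$) to iterated applications of the generators $U_1, U_2$ (resp. $V_1, V_2$), whose action on $\mathcal{S}(\R)$ is given by definition. First I would fix the relation between $U_\ell$ and the generators. Recall that $U_\ell = \delta_\ell$, and in the twisted group algebra $\ell^1(\Z^2,\omega_\theta)$ we have $\delta_\ell \delta_{\ell'} = \omega_\theta(\ell,\ell')\, \delta_{\ell+\ell'}$. With $\ell = \binom{m}{n} = m e_1 + n e_2$ and the cocycle $\omega_\theta(x,y) = e^{-\pi i \langle \theta x, y\rangle}$, a short computation gives $U_\ell = e^{\pi i m n \theta}\, U_1^m U_2^n = e(mn\theta/2)\, U_1^m U_2^n$ (the phase being exactly the factor that makes $\binom{m}{n} \mapsto U_\ell$ consistent). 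So it suffices to compute $(f.U_1^m U_2^n)(x)$ for $m,n\in\Z$ from the defining right-action $(f.U_1)(x) = f(x+\theta)$ and $(f.U_2)(x) = e(x) f(x)$.

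Next I would carry out the induction. A trivial induction on $n\geq 0$ gives $(f.U_2^n)(x) = e(nx) f(x)$, and since $U_2$ acts as a unitary (multiplication by the unimodular function $e(x)$) this extends to all $n\in\Z$. Similarly, induction on $m$ gives $(g.U_1^m)(x) = g(x + m\theta)$ for all $m\in\Z$. Composing, $(f.U_1^m U_2^n)(x) = \big((f.U_1^m).U_2^n\big)(x) = e(nx)\, (f.U_1^m)(x) = e(nx) f(x+m\theta)$. Multiplying by the phase $e(mn\theta/2)$ coming from $U_\ell = e(mn\theta/2) U_1^m U_2^n$ yields
$$
(f.U_\ell)(x) = e(mn\theta/2)\, e(nx)\, f(x+m\theta),
$$
which is the first formula. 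The second formula for the left-action of $V_\ell$ is entirely parallel: one checks $V_\ell = e(-mn/(2\theta))\, V_1^m V_2^n$ using $\omega_{1/\theta}$, then inducts using $(V_1.f)(x) = f(x+1)$ and $(V_2.f)(x) = e(-x/\theta) f(x)$ to get $(V_1^m V_2^n . f)(x) = e(-nx/\theta) f(x+m)$, and multiplies by the phase.

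There is essentially no serious obstacle here; the one point requiring care is bookkeeping of the scalar phases, namely verifying that the coefficient relating $U_\ell$ to the ordered product $U_1^m U_2^n$ is precisely $e(mn\theta/2)$ (and likewise $e(-mn/(2\theta))$ on the $\mathcal{B}^\infty$ side), which follows from the cocycle identity $\delta_{e_1}^m \delta_{e_2}^n = \omega_\theta(m e_1, n e_2)^{-1} \cdots$ — more precisely from repeatedly applying $\delta_x \delta_y = \omega_\theta(x,y)\delta_{x+y}$ and collecting the resulting product of cocycle values, together with $\omega_\theta(e_1,e_1) = \omega_\theta(e_2,e_2) = 1$. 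Alternatively, and perhaps more cleanly, one can simply verify directly that the right-hand sides of the two displayed formulas define $\omega_\theta$- and $\omega_{1/\theta}$-representations of $\Z^2$ agreeing with the given generators, which pins them down uniquely and avoids tracking the inductive phases by hand.
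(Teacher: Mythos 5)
Your strategy is essentially the paper's: express $U_\ell$ as an explicit cocycle phase times the ordered product $U_1^mU_2^n$ and then iterate the generator actions (the paper does exactly this for $U_\ell$ and declares the $V_\ell$ case analogous), and your $U_\ell$ half is correct, including the phase $U_\ell=e(mn\theta/2)U_1^mU_2^n$.

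On the $V_\ell$ side, however, both of your intermediate claims are false as stated, and your final formula comes out right only because the two errors cancel. With the paper's conventions ($\delta_x\delta_y=\omega(x,y)\delta_{x+y}$ and a genuine left action, so $(V_1^mV_2^n).f=V_1^m.(V_2^n.f)$), one computes $\omega_{1/\theta}(me_1,ne_2)=e^{-\pi i mn/\theta}$, hence $V_\ell=e(+mn/(2\theta))\,V_1^mV_2^n$, not $e(-mn/(2\theta))\,V_1^mV_2^n$; and, unlike the right-action case where the multiplication by $e(nx)$ is applied last and no cross term appears, the left action produces one: $(V_1^mV_2^n.f)(x)=(V_2^n.f)(x+m)=e(-mn/\theta)\,e(-nx/\theta)\,f(x+m)$, not $e(-nx/\theta)f(x+m)$. (Your two formulas are in fact the correct ones for the reversed product $V_2^nV_1^m$, for which $V_\ell=e(-mn/(2\theta))V_2^nV_1^m$.) Since $e(mn/(2\theta))\,e(-mn/\theta)=e(-mn/(2\theta))$, the displayed conclusion is correct, but the ``bookkeeping of the scalar phases'' that you yourself flag as the only delicate point is precisely where the write-up slips; the fix is the one-line computation above, or your alternative suggestion of checking that the right-hand sides define $\omega_\theta$- and $\omega_{1/\theta}$-compatible actions agreeing with the generators, carried out with the correct composition order.
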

\begin{proof}
	We only prove the formula for $U_\ell$ as the other is completely analogous. Recall that we have
	$$
	U_{\ell} = \overline{ \omega_\theta \left( \begin{pmatrix}
		m \\ 0
		\end{pmatrix}, \begin{pmatrix}
		0 \\ n
		\end{pmatrix}   \right) } U_1^m U_2^n = e(mn\theta/2)U_1^mU_2^n.
	$$
	Therefore
	\begin{align*}
		(f.U_\ell)(x) &= e(mn\theta/2) (f.U_1^mU_2^n)(x) \\
		&= e(mn\theta/2)e(nx)(f.U_1^m)(x) \\
		&= e(mn\theta/2)e(nx)f(x+m\theta).
	\end{align*}
\end{proof}

Define $\mathcal{A}^\infty$-valued and $\mathcal{B}^\infty$-valued inner-products on  $\mathcal{S}(\R)$ by
$$
\ip{f,g}_{\mathcal{A}^\infty} (\ell) := \theta \int_\R \overline{ f(x+m\theta) } g(x) e(-nx) \;dx,
$$
$$
_{\mathcal{B}^\infty}\ip{f,g}(\ell) := \int_\R f(x-m) \overline{ g(x) } e(nx/\theta)\;dx,
$$
where $f,g\in \mathcal{S}(\R)$ and $\ell = \begin{pmatrix}
m \\ n
\end{pmatrix}\in \Z^2$.

\begin{thm}\cite[Theorem 1.1]{Rie83}, \cite[Theorem 2.15]{Rie88}, \cite[p. 646]{Wal00}
	With the actions and inner-products defined above, $\mathcal{S}(\R)$ becomes a $\mathcal{B}^\infty$-$\mathcal{A}^\infty$-pre-imprimitivity bimodule. In particular, $\mathcal{S}(\R)$ completes to a $\mathcal{A}_{\frac{1}{\theta}}$-$\mathcal{A}_{\theta}$-imprimitivity bimodule.
\end{thm}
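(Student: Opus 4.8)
The plan is to check directly that $\mathcal{S}(\R)$, together with the two module actions and the two inner products, satisfies the axioms of a $\mathcal{B}^\infty$-$\mathcal{A}^\infty$-pre-imprimitivity bimodule, and then to invoke the standard completion procedure to obtain the imprimitivity bimodule over the $C^*$-algebras. Concretely, the axioms to verify are: \emph{(a)} $\mathcal{S}(\R)$ carries commuting left $\mathcal{B}^\infty$- and right $\mathcal{A}^\infty$-module structures; \emph{(b)} the two $\mathcal{S}(\R)$-valued inner products are well defined (landing in $\mathcal{B}^\infty$ and $\mathcal{A}^\infty$ respectively), have the correct sesqui-linearity, satisfy $\langle f,g\rangle_{\mathcal{A}^\infty}^{*}=\langle g,f\rangle_{\mathcal{A}^\infty}$ and the analogous identity on the left, are compatible with the respective module actions, and are mutually adjointable (the left action is by operators adjointable for the right inner product, and vice versa); \emph{(c)} the associativity (``imprimitivity'') relation ${}_{\mathcal{B}^\infty}\langle f,g\rangle\cdot h = f\cdot\langle g,h\rangle_{\mathcal{A}^\infty}$ holds; \emph{(d)} both inner products are positive; and \emph{(e)} both inner products are full.

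\emph{Steps (a) and (b).} Using the explicit formulas of Proposition \ref{prop:module_action}, each $U_\ell$ and each $V_\ell$ acts on $\mathcal{S}(\R)$ by a translation composed with a modulation, so $\mathcal{S}(\R)$ is preserved; moreover $\ell\mapsto f\cdot U_\ell$ and $\ell\mapsto V_\ell\cdot f$ are rapidly decreasing in $\ell$ in the Schwartz seminorms, so the actions extend by $\ell^1$-summation to $\mathcal{A}^\infty$ and $\mathcal{B}^\infty$. A short computation with the exponential factors shows twisted multiplicativity in $\ell$ with respect to $\omega_\theta$, resp.\ $\omega_{1/\theta}$, and, crucially, that the right $U$-action commutes with the left $V$-action; this is the point where the pairing of $\theta$ with $1/\theta$ enters, as the cross phases arising from translating by $\theta$ against modulating by $1/\theta$ (and symmetrically) must cancel. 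Convergence of the integrals defining $\langle f,g\rangle_{\mathcal{A}^\infty}$ and ${}_{\mathcal{B}^\infty}\langle f,g\rangle$, together with their rapid decay in $\ell$ (hence membership in $\mathcal{A}^\infty$, resp.\ $\mathcal{B}^\infty$), follows from integration by parts using smoothness and decay of Schwartz functions. The algebraic identities in (b) — sesqui-linearity, $*$-symmetry, compatibility with the actions, and mutual adjointability — all reduce to changes of variable in the defining integrals and bookkeeping of the $\omega_\theta$-phases, and are routine.

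\emph{Step (c): the main obstacle.} Expanding both sides of
\[
{}_{\mathcal{B}^\infty}\langle f,g\rangle\cdot h \;=\; f\cdot\langle g,h\rangle_{\mathcal{A}^\infty}, \qquad f,g,h\in\mathcal{S}(\R),
\]
using Proposition \ref{prop:module_action} and the definitions of the inner products, one finds the left-hand side to be a sum over $m\in\Z$ of integrals coming from the $V$-action and the right-hand side a sum over $n\in\Z$ of integrals coming from the $U$-action. Matching the two is precisely an instance of the Poisson summation formula applied to a suitable Schwartz function manufactured from $g$; the real work is to identify that function and to track the constants $\theta$, $1/\theta$ and the $2\pi i$-phases so that the two sides coincide on the nose. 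I expect this Fourier-analytic identity, together with the positivity statement in (d), to be the technical heart of the argument — it is exactly the computation carried out in \cite{Rie83} and \cite{Rie88}. For positivity one checks directly (again by Poisson summation, which realizes $\langle f,f\rangle_{\mathcal{A}^\infty}$ as a positive-definite function on $\Z^2$, i.e.\ a positive element of $\mathcal{A}_\theta$) that one of the inner products is positive; positivity of the other then follows formally from the imprimitivity relation of Step (c).

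\emph{Step (e) and completion.} For fullness, observe that $\cspan\{\langle f,g\rangle_{\mathcal{A}^\infty}:f,g\in\mathcal{S}(\R)\}$ is a closed subspace of $\mathcal{A}_\theta$ which, by the compatibility $\langle f,g\cdot a\rangle_{\mathcal{A}^\infty}=\langle f,g\rangle_{\mathcal{A}^\infty}\,a$ (extended to all $a\in\mathcal{A}_\theta$ by density and continuity) and the $*$-symmetry, is a closed two-sided ideal; it is nonzero, since $\langle f,f\rangle_{\mathcal{A}^\infty}(0)=\theta\int_\R|f|^2\,dx>0$ for $f\neq0$, so by simplicity of $\mathcal{A}_\theta$ it is all of $\mathcal{A}_\theta$. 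The same argument, using simplicity of $\mathcal{A}_{1/\theta}$, yields fullness of the left inner product. With axioms (a)--(e) in hand, $(\mathcal{S}(\R),{}_{\mathcal{B}^\infty}\langle\cdot,\cdot\rangle,\langle\cdot,\cdot\rangle_{\mathcal{A}^\infty})$ is a $\mathcal{B}^\infty$-$\mathcal{A}^\infty$-pre-imprimitivity bimodule; completing in the norm $\|f\|=\|\langle f,f\rangle_{\mathcal{A}^\infty}\|^{1/2}$ — which, as part of the pre-imprimitivity axioms, agrees with the norm coming from the left inner product — and applying the standard completion theorem then produces the $\mathcal{A}_{1/\theta}$-$\mathcal{A}_\theta$-imprimitivity bimodule. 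This last step is purely formal.
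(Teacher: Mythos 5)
First, note that the paper itself does not prove this statement: it is imported verbatim from \cite[Theorem 1.1]{Rie83}, \cite[Theorem 2.15]{Rie88} and \cite[p.~646]{Wal00}, so there is no in-paper argument to compare against; your outline does follow the same route as those sources (verify the pre-imprimitivity axioms for the Heisenberg-type module and complete). However, as a proof your proposal has a genuine gap, and it sits exactly where the content of the theorem lies. Steps (a), (b) and (e) are indeed routine (your fullness argument via simplicity of $\mathcal{A}_{\theta}$ and $\mathcal{A}_{\frac{1}{\theta}}$ is fine, and is even a shortcut compared with Rieffel's direct density argument, which works without simplicity). But steps (c) and (d) are only described, not carried out: you assert that the associativity identity ${}_{\mathcal{B}^\infty}\ip{f,g}\cdot h = f\cdot \ip{g,h}_{\mathcal{A}^\infty}$ is ``precisely an instance of the Poisson summation formula'' without exhibiting the Schwartz function to which it is applied or matching the phases and the constants $\theta$, $1/\theta$ on the two sides, and you then defer to ``the computation carried out in \cite{Rie83} and \cite{Rie88}'' --- which is circular, since those are the very results being proved. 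Identifying the right axioms and guessing that the heart is a Poisson-summation-type collapse of $\sum_n e(n(x-y))$ is a plan, not a proof.

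The positivity step also needs repair as stated. Realizing $\ip{f,f}_{\mathcal{A}^\infty}$ as an (ordinary) positive-definite function on $\Z^2$ would only give positivity in $C^*(\Z^2)$; what is needed is $\omega_\theta$-twisted positive-definiteness, i.e.\ positivity in $C^*_r(\Z^2,\omega_\theta)\cong\mathcal{A}_\theta$, and the cleanest route is through the $L^2$-realization of the inner products (Proposition \ref{prop:L2-trick}), representing $\mathcal{A}_\theta$ faithfully on $L^2(\R)$ via the module action and checking $\ip{g\cdot\ip{f,f}_{\mathcal{A}^\infty},g}_{L^2(\R)}\geq 0$ --- again a computation you would have to do. Finally, deducing positivity of the second inner product ``formally from the imprimitivity relation'' is correct in spirit (via $\ip{{}_{\mathcal{B}^\infty}\ip{f,f}\cdot g,\,g}_{\mathcal{A}^\infty}=\ip{f,g}_{\mathcal{A}^\infty}^*\ip{f,g}_{\mathcal{A}^\infty}\geq 0$), but it additionally requires that positivity of the left action as operators on the right pre-Hilbert module forces positivity in $\mathcal{A}_{\frac{1}{\theta}}$, e.g.\ by faithfulness of that action (which you can get from fullness or simplicity); this ingredient should be stated, not left implicit.
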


Below we make an important observation that the inner-products on $\mathcal{S}(\R)$ can be realized using the actions and the usual $L^2$-inner product. This has been observed in \cite{CL17}.

\begin{prop} \label{prop:L2-trick}
	Let $f,g\in \mathcal{S}(\R)$ and $\ell = \begin{pmatrix}
	m \\ n
	\end{pmatrix}\in \Z^2$. Then 
	$$
	\ip{f,g}_{\mathcal{A}^\infty}(\ell) = \theta e(mn\theta/2) \ip{g.U_{-\ell}, f }_{L^2(\R)}
	$$
	and
	$$
	_{B_\infty}\ip{f,g} (\ell) = e(mn/(2\theta))\ip{f, V_\ell.g }_{L^2(\R)}.
	$$
\end{prop}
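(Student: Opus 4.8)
The plan is to verify each identity by a direct computation, unwinding the definitions of the inner products and the module actions given in Proposition \ref{prop:module_action}. The whole statement is essentially a bookkeeping exercise comparing two integral formulas, so the main ``obstacle'' is nothing more than keeping track of the various phase factors $e(\cdot)$; there is no conceptual difficulty.

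First I would treat the $\mathcal{A}^\infty$-valued inner product. Starting from the definition
$$
\ip{f,g}_{\mathcal{A}^\infty}(\ell) = \theta \int_\R \overline{f(x+m\theta)}\, g(x)\, e(-nx)\;dx,
$$
I would substitute $y = x + m\theta$ to move the shift onto $g$, obtaining $\theta \int_\R \overline{f(y)}\, g(y - m\theta)\, e(-n(y-m\theta))\;dy$, which equals $\theta e(mn\theta)\int_\R \overline{f(y)}\, g(y-m\theta) e(-ny)\;dy$. On the other hand, applying Proposition \ref{prop:module_action} with $-\ell = \begin{pmatrix} -m \\ -n \end{pmatrix}$ gives $(g.U_{-\ell})(y) = e(mn\theta/2) e(-ny) g(y - m\theta)$, so that
$$
\ip{g.U_{-\ell}, f}_{L^2(\R)} = \int_\R \overline{f(y)}\, (g.U_{-\ell})(y)\;dy = e(mn\theta/2)\int_\R \overline{f(y)}\, g(y-m\theta)\, e(-ny)\;dy,
$$
using the convention that the $L^2$-inner product is conjugate-linear in the second variable (matching the conjugate-linearity of $\ip{\cdot,\cdot}_{\mathcal{A}^\infty}$ in the first variable). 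Comparing the two expressions yields $\ip{f,g}_{\mathcal{A}^\infty}(\ell) = \theta e(mn\theta) e(-mn\theta/2) \ip{g.U_{-\ell},f}_{L^2(\R)} = \theta e(mn\theta/2) \ip{g.U_{-\ell},f}_{L^2(\R)}$, as claimed. (One should double-check the inner-product convention used in the paper; if the $L^2$-pairing is instead linear in the first variable, the same computation goes through after a harmless relabelling, since all the functions are Schwartz and the integrals converge absolutely.)

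The second identity is entirely analogous. From
$$
{}_{\mathcal{B}^\infty}\ip{f,g}(\ell) = \int_\R f(x-m)\, \overline{g(x)}\, e(nx/\theta)\;dx,
$$
I would use Proposition \ref{prop:module_action} to write $(V_\ell.g)(x) = e(-mn/(2\theta)) e(-nx/\theta) g(x+m)$, hence $(V_\ell.g)(x+m) = e(-mn/(2\theta)) e(-n(x+m)/\theta) g(x+2m)$ — but it is cleaner to instead compute $\ip{f, V_\ell.g}_{L^2(\R)} = \int_\R f(x)\, \overline{(V_\ell.g)(x)}\;dx = \int_\R f(x)\, e(mn/(2\theta))\, e(nx/\theta)\, \overline{g(x+m)}\;dx$, then substitute $x \mapsto x - m$ to match the shift in the definition of ${}_{\mathcal{B}^\infty}\ip{f,g}$. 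Collecting the resulting phase factors gives ${}_{\mathcal{B}^\infty}\ip{f,g}(\ell) = e(mn/(2\theta)) \ip{f, V_\ell.g}_{L^2(\R)}$. Since every function involved lies in $\mathcal{S}(\R)$, all integrals and substitutions are manifestly justified, so no analytic subtleties arise; the only care needed is in the sign and factor-of-$2$ conventions for the cocycle $\omega_\theta$ and for the $L^2$-pairing, which I would fix explicitly at the start of the proof.
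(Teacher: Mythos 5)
Your proposal is correct and follows essentially the same route as the paper: a direct computation unwinding the definitions of the inner products and the $U_\ell$, $V_\ell$ actions from Proposition \ref{prop:module_action}, with the same $L^2$-convention (linear in the first variable, conjugate-linear in the second) that the paper's own proof uses. The only cosmetic difference is that you start from the $\mathcal{A}^\infty$- and $\mathcal{B}^\infty$-valued inner products and change variables, whereas the paper expands $\ip{g.U_{-\ell},f}_{L^2(\R)}$ and compares; the phase bookkeeping in both identities checks out.
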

\begin{proof}
	Again we only verify the first equality.
	\begin{align*}
		\ip{g.U_{-\ell}, f}_{L^2(\R)} &= \int_\R (g.U_{-\ell})(x)\overline{f(x)}\;dx \\
		&= \int_\R e(mn\theta/2)e(-nx)g(x-m\theta)\overline{f(x)} \;dx \\
		&= e(mn\theta/2) \int_{\R} e(-n(x+m\theta)) g(x) \overline{  f(x+m\theta) } \;dx \\
		&= e(-mn\theta/2)\int_\R e(-nx) \overline{ f(x+m\theta) }g(x)\;dx.
	\end{align*}
	The proof is completed by comparing this expression with the $\mathcal{A}^\infty$-valued inner product $\ip{f,g}_{\mathcal{A}^\infty}(\ell)$.
\end{proof}


\section{The K-theory and isomorphism classes of $\mathcal{A}_{\theta}\rtimes_A \Z$}

In this section we compute the $K$-theory of crossed products of the form $\mathcal{A}_{\theta} \rtimes_A \Z$ and determine their isomorphism classes. Throughout the section $A$ will be a matrix in $SL_2(\Z)$ of infinite order. We also give a set of explicit generators for the $K_0$-group and compute their images under the induced map of the unique tracial state (recall from Theorem \ref{thm:structure} that the crossed product has a unique trace). 

The main tool is the Pimsner-Voiculescu sequence:
\[
\begin{CD}
K_0 (\mathcal{A}_{\theta}) @>{\id - \alpha^{-1}_{*0}}>>K_0 (\mathcal{A}_{\theta}))@>{i_{*}}>> K_0 (\mathcal{A}_{\theta} \rtimes_A \Z )   \\
@A{\delta_1}AA & &  @VV{\delta_0}V            \\
 K_1 (\mathcal{A}_{\theta}\rtimes_A \Z) @<<{i_{*}}<  K_1 (\mathcal{A}_{\theta}) @<<{\id - \alpha^{-1}_{*1}}< K_1 (\mathcal{A}_{\theta}).
\end{CD}
\]
Recall from Section 2.1 that the map $\id - \alpha^{-1}_{*0}$ is always the zero map. Therefore the sequence breaks into two short exact sequences 
\begin{equation*}
0 \longrightarrow \bigslant{ K_1(\mathcal{A}_{\theta})}{\mathrm{im}(\id-\alpha^{-1}_{*1})}\stackrel{i_*}{\longrightarrow} K_1(\mathcal{A}_{\theta}\rtimes_A\Z) \stackrel{\delta_1}{\longrightarrow} K_0(\mathcal{A}_{\theta}) \longrightarrow 0,
\end{equation*}
\begin{equation*}
0 \longrightarrow  K_0(\mathcal{A}_{\theta}) \stackrel {i_*}{\longrightarrow} K_0(\mathcal{A}_{\theta}\rtimes_A\Z)\stackrel{\delta_0}{\longrightarrow} \mathrm{ker}(\id-\alpha^{-1}_{*1}) \longrightarrow 0.
\end{equation*}
Note that each of the sequences splits because the second right-most group is free abelian.  Also note that since $K_1(\mathcal{A}_{\theta})$ is generated by $[U_1]_1$ and $[U_2]_1$, if we write $A  = \begin{pmatrix}
	a & b \\ c & d
	\end{pmatrix}$ then by the definition of the action we have 
$$
\id - \alpha^{-1}_{*1}([U_1]_1)=(1-d)[U_1]_1+b[U_2]_1,
$$
$$
\id - \alpha^{-1}_{*1}([U_2]_1)=c[U_1]_1+(1-a)[U_2]_1.
$$
Hence with respect to this set of generators the map $\id - \alpha^{-1}_{*1}$ is nothing but the matrix $I_2-A^{-1}$.

Let us begin by computing the $K_1$ group.

\begin{prop}
\begin{enumerate}
\item If $\tr(A)\not\in\{0,\pm1,2\}$ then $I_2-A^{-1}$ has the Smith normal form
$$
\begin{pmatrix}
  h_1&0\\
   0&h_2
\end{pmatrix}
$$
and the group $K_1(\mathcal{A}_{\theta}\rtimes_A\Z)$ is isomorphic to $\Z^2\oplus \Z_{h_1}\oplus\Z_{h_2}$.
\item If $\tr(A)=2$ then $I_2-A^{-1}$ has the Smith normal form
$$
\begin{pmatrix}
  h_1&0\\
   0&0
\end{pmatrix}
$$
and the group $K_1(\mathcal{A}_{\theta}\rtimes_A\Z)$ is isomorphic to $\Z^3\oplus \Z_{h_1}$.
\end{enumerate}
\end{prop}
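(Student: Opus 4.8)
The plan is to feed the two split short exact sequences into the computation of $K_1(\mathcal{A}_\theta \rtimes_A \Z)$. Recall that the Pimsner--Voiculescu sequence gives
\[
0 \longrightarrow \bigslant{K_1(\mathcal{A}_{\theta})}{\mathrm{im}(\id-\alpha^{-1}_{*1})} \stackrel{i_*}{\longrightarrow} K_1(\mathcal{A}_{\theta}\rtimes_A\Z) \stackrel{\delta_1}{\longrightarrow} K_0(\mathcal{A}_{\theta}) \longrightarrow 0,
\]
which splits because $K_0(\mathcal{A}_\theta)\cong \Z^2$ is free. Hence $K_1(\mathcal{A}_\theta \rtimes_A \Z) \cong \Z^2 \oplus \mathrm{coker}(\id - \alpha^{-1}_{*1})$. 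As observed just above the proposition, with respect to the generators $[U_1]_1, [U_2]_1$ of $K_1(\mathcal{A}_\theta)$ the endomorphism $\id - \alpha^{-1}_{*1}$ is given by the integer matrix $I_2 - A^{-1}$, so the whole problem reduces to identifying the cokernel of $I_2 - A^{-1}$ acting on $\Z^2$, which is exactly what the Smith normal form computes: if $S(I_2 - A^{-1}) = \diag(h_1,\dots)$ then $\mathrm{coker}(I_2 - A^{-1}) \cong \bigoplus_i \Z_{h_i} \oplus \Z^{2 - \rank(I_2 - A^{-1})}$.

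So the two cases are distinguished purely by the rank of $I_2 - A^{-1}$, and the key step is the elementary linear-algebra fact: for $A \in SL_2(\Z)$ of infinite order, $\rank(I_2 - A^{-1}) = 1$ if and only if $\tr(A) = 2$, and $\rank(I_2 - A^{-1}) = 2$ otherwise (i.e.\ when $\tr(A) \notin \{0,\pm 1, 2\}$; the excluded traces give finite order by the remark after Theorem~\ref{thm:structure}). To see this, note $\det(I_2 - A^{-1}) = \det(A^{-1})\det(A - I_2) = \det(A - I_2) = 1 - \tr(A) + \det(A) = 2 - \tr(A)$, using $\det A = 1$ and the characteristic polynomial $p(\lambda) = \lambda^2 - \tr(A)\lambda + 1$. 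Thus $I_2 - A^{-1}$ is invertible over $\Q$ precisely when $\tr(A) \neq 2$, which (excluding the finite-order traces) is the case $\tr(A) \notin \{0,\pm 1, 2\}$; here the Smith normal form is $\diag(h_1, h_2)$ with $h_1 h_2 = |2 - \tr(A)|$ and the cokernel is $\Z_{h_1} \oplus \Z_{h_2}$, giving $K_1 \cong \Z^2 \oplus \Z_{h_1} \oplus \Z_{h_2}$. When $\tr(A) = 2$, the matrix $I_2 - A^{-1}$ is nonzero (else $A = I_2$, which has finite order, contradiction), so it has rank exactly $1$, Smith normal form $\diag(h_1, 0)$, and cokernel $\Z_{h_1} \oplus \Z$, giving $K_1 \cong \Z^2 \oplus \Z \oplus \Z_{h_1} = \Z^3 \oplus \Z_{h_1}$.

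I do not expect any serious obstacle here: the hardest point is simply assembling the pieces correctly, namely that the relevant short exact sequence splits (free quotient) and that the Smith normal form governs the cokernel. One small care-point is to make sure $I_2 - A^{-1}$ is genuinely nonzero in the $\tr(A) = 2$ case and genuinely full-rank in the other case, which the determinant computation above settles, together with the standing assumption that $A$ has infinite order so $A \neq \pm I_2$. I would write the argument in the order: (1) recall the split SES and reduce $K_1(\mathcal{A}_\theta\rtimes_A\Z)$ to $\Z^2 \oplus \mathrm{coker}(I_2 - A^{-1})$; (2) compute $\det(I_2 - A^{-1}) = 2 - \tr(A)$ and deduce the rank dichotomy; (3) invoke the structure theorem for the Smith normal form to read off the two cases.
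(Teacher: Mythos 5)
Your proposal is correct and takes essentially the same route as the paper: split the Pimsner--Voiculescu sequence using freeness of $K_0(\mathcal{A}_\theta)$, identify $\id-\alpha^{-1}_{*1}$ with $I_2-A^{-1}$ in the basis $[U_1]_1,[U_2]_1$, and read the cokernel off the Smith normal form via $\det(I_2-A^{-1})=2-\tr(A)$. The only (minor) difference is in the $\tr(A)=2$ case, where the paper gets rank one by conjugating $A$ within $GL_2(\Z)$ to $\begin{pmatrix}1&k\\0&1\end{pmatrix}$ with $k\neq 0$, while you argue more directly that $I_2-A^{-1}\neq 0$ because $A\neq I_2$; both arguments are valid.
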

\begin{proof}
We have seen that the sequence
\begin{equation*}
	0 \longrightarrow \bigslant{ K_1(\mathcal{A}_{\theta})}{\mathrm{im}(\id-\alpha^{-1}_{*1})}\stackrel{i_*}{\longrightarrow} K_1(\mathcal{A}_{\theta}\rtimes_A\Z) \stackrel{\delta_1}{\longrightarrow} K_0(\mathcal{A}_{\theta}) \longrightarrow 0
\end{equation*}
is split exact. Therefore we need only determine the quotient group	$ K_1(\mathcal{A}_{\theta})/\mathrm{im}(\id-\alpha^{-1}_{*1}) $. Since an element of $GL_2(\Z)$ only changes the basis of $K_1(\mathcal{A}_{\theta})=\Z[U_1]_1+\Z[U_2]_1$, the image of the Smith normal form of $I_2-A^{-1}$ is isomorphic to the image of $I_2-A^{-1}$. It remains to compute the Smith normal form (or the rank) of $I_2-A^{-1}$ in each case.

In general we have
$$
\det(I_2-A^{-1})=2-\tr(A).
$$
As a consequence the matrix $I_2-A^{-1}$ has full rank when $\tr(A)\not\in\{0,\pm1,2\}$, and we have completed the proof for case (1). When $\tr(A)=2$, one can show that the matrix $A$ is in the same conjugacy class in $GL_2(\Z)$ as
$$
A' =\begin{pmatrix}
  1&k\\
   0&1
\end{pmatrix}
$$
for some integer $k$ (see for example \cite[Lemma 5]{Phy}). Since $A$ has infinite order, $k$ must be nonzero. Observe that if $A$ and $A'$ are conjugate within $GL_2(\Z)$ then so are $I-A^{-1}$ and $I-(A')^{-1}$. Hence we conclude that $I-A^{-1}$ has rank one.
\end{proof}

We proceed to compute the $K_0$ group. We will first look at the case $\tr(A) \notin \{0,\pm 1, 2\}$ and then treat the case $\tr(A) = 2$, as in the latter finding a set of generators is more involved.

\begin{prop}
If $\tr(A)\not\in\{0,\pm1,2\}$, then the group $K_0(\mathcal{A}_{\theta}\rtimes_A \Z)$ is generated by $[1]_0$ and $[i(p_{\theta})]_0$.
\end{prop}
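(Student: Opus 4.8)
The plan is to use the split exact sequence
\[
0 \longrightarrow K_0(\mathcal{A}_{\theta}) \stackrel{i_*}{\longrightarrow} K_0(\mathcal{A}_{\theta}\rtimes_A\Z) \stackrel{\delta_0}{\longrightarrow} \ker(\id - \alpha^{-1}_{*1}) \longrightarrow 0
\]
together with the fact, established in the preceding proposition, that $\id - \alpha^{-1}_{*1}$ is represented by $I_2 - A^{-1}$ on $K_1(\mathcal{A}_{\theta}) = \Z[U_1]_1 + \Z[U_2]_1$. Since $\tr(A) \notin \{0,\pm 1, 2\}$ gives $\det(I_2 - A^{-1}) = 2 - \tr(A) \neq 0$, the matrix $I_2 - A^{-1}$ is injective, so $\ker(\id - \alpha^{-1}_{*1}) = 0$. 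Hence $\delta_0$ has trivial image and $i_* : K_0(\mathcal{A}_{\theta}) \to K_0(\mathcal{A}_{\theta}\rtimes_A\Z)$ is an isomorphism. Since $K_0(\mathcal{A}_{\theta})$ is generated by $[1]_0$ and $[p_\theta]_0$ (recalled in Section 2.2), the group $K_0(\mathcal{A}_{\theta}\rtimes_A\Z)$ is generated by $i_*([1]_0) = [1]_0$ and $i_*([p_\theta]_0) = [i(p_\theta)]_0$.

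The only points requiring care are bookkeeping ones. First, one should check that $i_*$ sends the class of the unit of $\mathcal{A}_{\theta}$ to the class of the unit of the crossed product; this is immediate since the canonical inclusion $i : \mathcal{A}_{\theta} \hookrightarrow \mathcal{A}_{\theta}\rtimes_A\Z$ is unital. Second, injectivity of $i_*$ is already built into exactness of the sequence, so no separate argument is needed. I do not anticipate a genuine obstacle here: the computation of $\det(I_2 - A^{-1})$ and the resulting vanishing of the kernel are exactly the ingredients already assembled in the previous proposition, and the statement follows formally. The substantive work — identifying the Smith normal form and handling the degenerate trace values $\tr(A) \in \{0,\pm 1\}$ — has been dispatched earlier by excluding finite-order matrices, and the genuinely delicate case $\tr(A) = 2$ (where $\ker(\id - \alpha^{-1}_{*1})$ is nonzero and a third generator $[P_A]_0$ must be produced) is deferred to the subsequent proposition.
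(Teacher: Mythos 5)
Your proposal is correct and follows essentially the same route as the paper: the split Pimsner--Voiculescu sequence, the observation that $\det(I_2-A^{-1})=2-\tr(A)\neq 0$ forces $\ker(\id-\alpha^{-1}_{*1})=0$, and hence that $i_*$ is an isomorphism carrying the generators $[1]_0$ and $[p_\theta]_0$ of $K_0(\mathcal{A}_{\theta})$ to generators of $K_0(\mathcal{A}_{\theta}\rtimes_A\Z)$. The additional bookkeeping remarks (unitality of $i$, injectivity of $i_*$ from exactness) are fine and consistent with the paper's argument.
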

\begin{proof}
Consider the split short exact sequence
\begin{equation}\label{eq exact seq}
0 \longrightarrow  K_0(\mathcal{A}_{\theta}) \stackrel {i_*}{\longrightarrow} K_0(\mathcal{A}_{\theta}\rtimes_A\Z)\stackrel{\delta_0}{\longrightarrow} \mathrm{ker}(\id-\alpha^{-1}_{*1}) \longrightarrow 0.
\end{equation}
By assumpion $\tr(A)\neq\{0,\pm1,2\}$. It follows from the equation $\det(I_2-A^{-1})=2-\tr (A)$ that $I_2 - A^{-1}$ is full-rank. This implies that the kernel of $\id - \alpha^{-1}_{*1}$ is zero, and therefore we obtain an isomorphism
$$
i_*:K_0(\mathcal{A}_{\theta})\rightarrow K_0(\mathcal{A}_{\theta}\rtimes_A\Z),
$$
which maps generators to generators
\end{proof}

Now we consider the case $\tr(A)=2$ and $A\neq I_2$. We first study the special case that $A$ is equal to either
$$
\begin{pmatrix}
1&h_1\\
0&1
\end{pmatrix}\ \text{or\ } 
\begin{pmatrix}
1&0\\
h_1&1
\end{pmatrix} \;\;\;\;\;\; (h_1 \in \Z).
$$
Then we show that one can always reduce to these cases. Note that if $A$ has either one of the forms above, then the Smith normal form of $I_2 -A^{-1}$ is precisely $\diag(h_1, 0)$.

Let us assume that $A$ is equal to the former one (we will see that the other case is completely analogous). Then $U_1$ is a fixed point of the automorphism $\alpha_A$. Hence we have
$$
\mathrm{ker}(\id-\alpha^{-1}_{*1})=\Z[U_1]_1,
$$
and we need to find a preimage of $[U_1]_1$ under $\delta_0$. Recall that the Pimsner-Voiculescu sequence is natural since the six-term exact sequence for Toeplitz extension is natural (see \cite{PV}). Now consider the homomorphism
$$
\varphi:C(\T)\rightarrow \mathcal{A}_{\theta},\quad \iota\mapsto U_1,
$$
where $\iota:\T\rightarrow \mathbb{C}$ is the inclusion function, which generates $C(\T)$ as a $C^*$-algebra. Since $U_1$ is fixed by $\alpha_A$, we have the follwoing commutative diagram
$$
\begin{CD}
C(\T)@>\varphi>>\mathcal{A}_{\theta}\\
@VV= V@VV\alpha_A V\\
C(\T)@>\varphi>>\mathcal{A}_{\theta}.
\end{CD}
$$
Regard $C(\T^2)$ as $C(\T)\rtimes_{\tau}\Z$, where $\tau$ is the trivial action. Let $w'$ be the implementing unitary in $C(\T)\rtimes_{\tau}\Z$ coming from $\Z$. Also let $w$ be the implementing unitary in $\mathcal{A}_{\theta} \rtimes_A \Z$ of the action. The unitaries $U_1$ and $w$ generate a commutative $C^*$-subalgebra of $\mathcal{A}_{\theta}\rtimes_A\Z$ since $U_1$ is a fixed point. Thus there is a $*$-homormophism $\varphi\rtimes\id:C(\T^2)\rightarrow\mathcal{A}_{\theta}\rtimes_A\Z$ satisfying
$$
(\varphi\rtimes\id)\left(\sum_{n\in\Z}f_n(w')^n \right) = \sum_{n\in\Z}\varphi(f_n)w^n \;\;\;\;\;\; (f_n\in C(\T)).
$$
By naturality of the Pimsner-Voiculescu sequence, we have the following commutative diagram
$$
\begin{CD}
K_0(C(\T^2))@>(\varphi\rtimes\id)_*>>K_0(\mathcal{A}_{\theta}\rtimes_A\Z)\\
@VV\delta_0' V@VV\delta_0 V\\
K_1(C(\T))@>\varphi_*>>K_1(\mathcal{A}_{\theta}).
\end{CD}
$$

As in \cite{Thesis}, define $X_0$ and $X_1\in C([0,1])$ by
$$
X_0(t)=\frac{1}{4}\begin{pmatrix}
3+\cos(2\pi t)&\sin(2\pi t)\\
\sin(2\pi t)&1-\cos(2\pi t)
\end{pmatrix},
$$
$$
X_1(t)=\frac{1}{8}
\begin{pmatrix}
1-\cos(2\pi t)&\sqrt{2(1-\cos(2\pi t))}-\sin(2\pi t)\\
-\sqrt{2(1-\cos(2\pi t))}-\sin(2\pi t)&-1+\cos(2\pi t)
\end{pmatrix}.
$$
These  can be regarded as elements in $M_2(C(\T))$. Further define $P$ in $M_2(C(\T^2))$ by
$$
P(y,e^{2\pi i t})=
\begin{pmatrix}
\overline{y}&0\\
0&\overline{y}
\end{pmatrix}
X_1(t)^*+
X_0(t)+
X_1(t)
\begin{pmatrix}
y&0\\
0&y
\end{pmatrix}.
$$
It was shown in \cite{Thesis} that $P$ is a projection, $[1]_0$ and $[P]_0$ generate $K_0(C(\T^2))$, and
$$
\delta_0'([P]_0)=[\iota]_1.
$$
Hence if we write $P_{U_1,w} := (\varphi\rtimes\id)(P)$ then we have the following commutative diagram.
$$
\begin{CD}
[P]_0@>(\varphi\rtimes\id)_*>>[P_{U_1,w}]_0\\
@VV\delta_0' V@VV\delta_0 V\\
[\iota]_1@>\varphi_*>>[U_1]_1.
\end{CD}
$$
and we have found a preimage of $[U_1]_1$ under the index map $\delta_0$. 

Now if $A = \begin{pmatrix}
1 & 0 \\
h_1 & 1
\end{pmatrix}$, then $U_2$ is a fixed point of the action generated by $A$. We can repeat the entire discussion and obtain an element $P_{U_2,w}$, which represents a preimage of $[U_2]_1$.

\begin{prop}
	When $A$ is equal to either
	$$
	\begin{pmatrix}
	1&h_1\\
	0&1
	\end{pmatrix}\ \text{or\ } 
	\begin{pmatrix}
	1&0\\
	h_1&1
	\end{pmatrix},
	$$
	the group $K_0(\mathcal{A}_{\theta}\rtimes_A \Z)$ is generated by $[1]_0$, $[i(p_{\theta})]_0$, and $[P_{U_1,w}]_0$ (or $[P_{U_2,w}]_0$ in the latter case). 
	
	Moreover, under the unique tracial state of $\mathcal{A}_{\theta}\rtimes_A \Z$, the image of these generators are respectively $1$, $\theta$, and $1$.
\end{prop}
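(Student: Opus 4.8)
The plan is to read the three classes off the split short exact sequence \eqref{eq exact seq} and then compute the trace of each one separately. First I would recall that $K_0(\mathcal{A}_\theta)=\Z[1]_0\oplus\Z[p_\theta]_0$ (Section 2.2), that for $A=\begin{pmatrix}1&h_1\\0&1\end{pmatrix}$ we have already shown $\ker(\id-\alpha^{-1}_{*1})=\Z[U_1]_1$, and that the element $P_{U_1,w}=(\varphi\rtimes\id)(P)$ constructed above satisfies $\delta_0([P_{U_1,w}]_0)=[U_1]_1$. Then, given an arbitrary $x\in K_0(\mathcal{A}_\theta\rtimes_A\Z)$, exactness at the right end gives $\delta_0(x)=n[U_1]_1$ for a unique $n\in\Z$, so $\delta_0(x-n[P_{U_1,w}]_0)=0$; by exactness in the middle, $x-n[P_{U_1,w}]_0$ then lies in $i_*(K_0(\mathcal{A}_\theta))=\Z[1]_0+\Z\, i_*([p_\theta]_0)$. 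Hence $[1]_0$, $i_*([p_\theta]_0)$ and $[P_{U_1,w}]_0$ generate $K_0(\mathcal{A}_\theta\rtimes_A\Z)$.

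Next I would compute the traces, writing $\tau:=\tau_A$. Since $\tau$ is a state, $\tau(1)=1$. The composition $\tau\circ i$ is a tracial state on $\mathcal{A}_\theta$, hence equals $\tau_\theta$ by uniqueness of the trace, so $\tau(i(p_\theta))=\tau_\theta(p_\theta)=\theta$. For the last generator, pass to $2\times2$ matrices: $(\tau)_*([P_{U_1,w}]_0)=(\tau)_*\big((\varphi\rtimes\id)_*[P]_0\big)=(\mu\otimes\Tr_2)(P)$, where $\mu:=\tau\circ(\varphi\rtimes\id)$ is a tracial state — that is, a Borel probability measure — on $C(\T^2)$. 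From the explicit formulas one reads off $\Tr_2 X_0(t)=1$ and $\Tr_2 X_1(t)=\Tr_2 X_1(t)^*=0$ for every $t$, and multiplying $X_1(t)^*$ or $X_1(t)$ on one side by the scalar matrix $\overline{y}I_2$ (resp. $yI_2$) leaves the trace zero; hence $\Tr_2\big(P(y,e^{2\pi i t})\big)=1$ for all $(y,e^{2\pi i t})\in\T^2$. Therefore $(\tau)_*([P_{U_1,w}]_0)=\int_{\T^2}\Tr_2\circ P\;d\mu=1$.

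Finally, the case $A=\begin{pmatrix}1&0\\h_1&1\end{pmatrix}$ is handled by repeating the argument verbatim with $U_1$ replaced by $U_2$ and $P_{U_1,w}$ by $P_{U_2,w}$; the identity $\Tr_2\circ P\equiv1$ is unaffected, since it never refers to the automorphism. I do not expect a genuine obstacle at this point: the substantive work — the naturality argument with $C(\T^2)=C(\T)\rtimes_\tau\Z$ that produces the explicit lift $P_{U_1,w}$ of $[U_1]_1$ — has already been carried out before the statement, so what remains is the bookkeeping above together with the elementary observation that the unnormalized matrix trace of $P$ is the constant function $1$. That last point is exactly what makes the computed trace value equal to $1$ regardless of which probability measure $\tau$ induces on $\T^2$.
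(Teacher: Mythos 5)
Your proposal is correct, and its overall structure matches the paper's: the generating set is read off the split Pimsner--Voiculescu sequence together with the already-established facts $\ker(\id-\alpha^{-1}_{*1})=\Z[U_1]_1$ and $\delta_0([P_{U_1,w}]_0)=[U_1]_1$, and the only substantive point remaining is that $\tau_A(P_{U_1,w})=1$. Where you differ is in how that trace is evaluated: the paper rewrites $X_0$ and $X_1$ via Euler's formula, expresses $P_{U_1,w}$ in terms of $U_1$ and $w$ by functional calculus, and applies $\tau_A$ directly (the $X_1$-terms and the nonconstant diagonal terms are killed by, or cancel under, the trace), whereas you pull the trace back to the probability measure $\mu=\tau_A\circ(\varphi\rtimes\id)$ on $\T^2$ and observe that the unnormalized fibrewise trace $\Tr_2\circ P$ is identically $1$, so the pairing is $1$ against any state whatsoever. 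Your version is slightly more robust: it avoids the functional-calculus bookkeeping, is insensitive to which circle coordinate is sent to $U_1$ and which to $w$, and makes transparent why the answer is independent of $\theta$ and $h_1$; the paper's version, in exchange, exhibits the concrete element of the crossed product being traced. Both arguments ultimately rest on the same elementary fact that the diagonal entries of $P$ sum to $1$, and your treatment of the other two generators ($\tau_A(1)=1$, and $\tau_A\circ i=\tau_\theta$ by uniqueness of the trace on $\mathcal{A}_\theta$, giving the value $\theta$ on $i_*([p_\theta]_0)$) is exactly what the paper leaves implicit.
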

\begin{proof}
It remains to prove that the element $P_{U_1,w}$ (or $P_{U_2,w}$) has trace one. By Euler's formula we have 
	$$
	X_0(t)=\frac{1}{4}\begin{pmatrix}
	3+\frac{1}{2}(e^{2\pi i t}+e^{-2\pi i t})& \frac{1}{2i}(e^{2\pi i t}-e^{-2\pi i t})\\
	\frac{1}{2i}(e^{2\pi i t}-e^{-2\pi i t})&1-\frac{1}{2}(e^{2\pi i t}+e^{-2\pi i t})
	\end{pmatrix}
	$$
	and 
	$$
	X_1(t)=\frac{1}{8}
	\begin{pmatrix}
	1-\frac{1}{2}(e^{2\pi i t}+e^{-2\pi i t})&\sqrt{2-(e^{2\pi i t}+e^{-2\pi i t})}-\frac{1}{2i}(e^{2\pi i t}-e^{-2\pi i t})\\
	-\sqrt{2-(e^{2\pi i t}+e^{-2\pi i t})}-\frac{1}{2i}(e^{2\pi i t}-e^{-2\pi i t})&-1+\frac{1}{2}(e^{2\pi i t}+e^{-2\pi i t})
	\end{pmatrix}.
	$$
	Hence we can express $P_{U_1,w}$ in terms of $U_1$ and $w$ using functional calculus.
	Applying (the canonical extension of) the unique tracial state $\tau_A$ of $\mathcal{A}_{\theta}\rtimes_A\Z$ to $P_{U_1,w}$ we get
	$$
	\tau_A(P_{U_1,w})=\tau_A\left(
	\frac{1}{4}  \begin{pmatrix}
	3+\frac{1}{2}(w+w^*)&\frac{1}{2i}(w-w^*)\\
	\frac{1}{2i}(w-w^*)&1-\frac{1}{2}(w+w^*)
	\end{pmatrix}\right)
	=\tau_A(1)=1.$$
This completes the proof
\end{proof}

\begin{rem}
The construction of $X_0$, $X_1$, and $P$ in \cite{Thesis} was inspired by a proposition in the appendix of \cite{PV}, which says that if $\mathcal{A}$ is a unital $C^*$-algebra and $\alpha$ is an automorphism of $\mathcal{A}$, then given  a projection of the form $p=u^*x_1^*+x_0+x_1u\in\mathcal{A}\rtimes_{\alpha}\Z$ for some $x_0$ and $x_1\in\mathcal{A}$ the unitary $\exp(2\pi i x_0l_{x_1})$ is in $\mathcal{A}$ and satisfies 
	$$
	\delta_0([p]_0)=[\exp(2\pi i x_0l_{x_1})]_1.
	$$
Here $l_{x_1}$ denotes the left support projection of $x_1$ in the enveloping von Neumann algebra of $\mathcal{A}$.
\end{rem}

For a generic $A$ in $SL_2(\Z)$ with $\tr(A)=2$ and $A\neq I_2$, we know that the matrix $A$ is in the same conjugacy class in $SL_2(\Z)$ with either 
$$
\begin{pmatrix}
1&h_1\\
0&1
\end{pmatrix}\ \text{or\ } 
\begin{pmatrix}
1&0\\
h_1&1
\end{pmatrix}.
$$
This is because one $GL_2(\Z)$-conjugacy class breaks into at most two $SL_2(\Z)$-conjugacy classes and one can verify the above two matrices are not similar in $SL_2(\Z)$. Suppose we have
$$
QAQ^{-1}=\begin{pmatrix}
1&h_1\\
0&1
\end{pmatrix}=:B
$$
for some $Q\in SL_2(\Z)$. Then the automorphism of $\mathcal{A}_{\theta}$ determined by $Q$ intertwines the actions coming from $A$ and $B$. Hence we obtain an isomorphism between $\mathcal{A}_{\theta}\rtimes_A\Z$ and $\mathcal{A}_{\theta}\rtimes_B\Z$, which preserves the unique tracial states. Let $P_A$ denote the image of $P_{U_1,w}$ (or $P_{U_2,w}$) under this isomorphism.


\begin{thm}\label{thm:K}
	Let $\theta$ be an irrational number and $A\in SL_2(\Z)$ be a matrix of infinite order. Write $\tau_A$ for the unique trace on $\mathcal{A}_{\theta}\rtimes_A \Z$. 	
	\begin{enumerate}
		\item If $\tr(A) = 2$, then $I_2-A^{-1}$ is of rank $1$. Suppose $I_2-A^{-1}$ has the Smith normal form of $\diag(h_1,0)$, then
		\begin{align*}
			K_0(\mathcal{A}_{\theta} \rtimes_A \Z) &\cong \Z\oplus \Z\oplus \Z, \\
			K_1(\mathcal{A}_{\theta} \rtimes_A \Z) &\cong \Z\oplus \Z \oplus \Z \oplus \Z_{h_1},\\
			(\tau_A)_*( K_0(\mathcal{A}_{\theta} \rtimes_A \Z) ) &= \Z + \theta\Z.
		\end{align*}
		The generators of $K_0$ are given by $[1]_0$, $i_*([p_\theta]_0)$, and $[P_A]_0$, and the images under the unique trace are $1$, $\theta$, and $1$, respectively.
		\item If $\tr(A) \not\in\{0,\pm1,2\}$, then $I_2-A^{-1}$ is of rank $2$. Suppose $I_2-A^{-1}$ has the Smith normal form of $\diag(h_1,h_2)$, then
		
		\begin{align*}
			K_0(\mathcal{A}_{\theta} \rtimes_A \Z) &\cong \Z\oplus \Z, \\
			K_1(\mathcal{A}_{\theta} \rtimes_A \Z) &\cong \Z \oplus \Z \oplus \Z_{h_1} \oplus \Z_{h_2}, \\
			(\tau_A)_*( K_0(\mathcal{A}_{\theta} \rtimes_A \Z) ) &= \Z + \theta\Z.
		\end{align*}
		The generators of $K_0$ are given by $[1]_0$ and $i_*([p_\theta]_0)$, and the images under the unique trace are $1$ and $\theta$, respectively.
	\end{enumerate}
\end{thm}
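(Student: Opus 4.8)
The plan is to assemble Theorem~\ref{thm:K} by stitching together the propositions already proved in this section, organizing the argument around the two trace cases. First I would handle the case $\tr(A)\not\in\{0,\pm1,2\}$: here the preceding propositions already give $K_1(\mathcal{A}_\theta\rtimes_A\Z)\cong\Z^2\oplus\Z_{h_1}\oplus\Z_{h_2}$ and $i_*:K_0(\mathcal{A}_\theta)\xrightarrow{\sim}K_0(\mathcal{A}_\theta\rtimes_A\Z)$ an isomorphism carrying the generators $[1]_0,[p_\theta]_0$ to $[1]_0,i_*([p_\theta]_0)$. The only new ingredient is the trace computation: since the inclusion $i:\mathcal{A}_\theta\hookrightarrow\mathcal{A}_\theta\rtimes_A\Z$ intertwines $\tau_\theta$ with the restriction of $\tau_A$ (the trace $\tau_A\circ i$ is a tracial state on $\mathcal{A}_\theta$, hence equals $\tau_\theta$ by uniqueness), we get $\tau_A(i_*([1]_0))=1$ and $\tau_A(i_*([p_\theta]_0))=\tau_\theta(p_\theta)=\theta$. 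Consequently $(\tau_A)_*(K_0(\mathcal{A}_\theta\rtimes_A\Z))=\Z+\theta\Z$, which also recovers the order-embedding statement.

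Next I would treat the case $\tr(A)=2$, $A\neq I_2$. The $K_1$ computation is again quoted from the earlier proposition. For $K_0$, I would recall that $A$ is $SL_2(\Z)$-conjugate to one of the two elementary matrices $\left(\begin{smallmatrix}1&h_1\\0&1\end{smallmatrix}\right)$ or $\left(\begin{smallmatrix}1&0\\h_1&1\end{smallmatrix}\right)$ via some $Q\in SL_2(\Z)$, so that $\alpha_Q$ induces a trace-preserving isomorphism $\mathcal{A}_\theta\rtimes_A\Z\cong\mathcal{A}_\theta\rtimes_B\Z$. By the proposition on the elementary case, $K_0(\mathcal{A}_\theta\rtimes_B\Z)$ is generated by $[1]_0$, $i_*([p_\theta]_0)$, $[P_{U_1,w}]_0$ with traces $1,\theta,1$; transporting back along this isomorphism gives generators $[1]_0$, $i_*([p_\theta]_0)$, $[P_A]_0$ of $K_0(\mathcal{A}_\theta\rtimes_A\Z)$ with the same traces, using that $\alpha_Q$ fixes $[1]_0$ and $i_*([p_\theta]_0)$ (the latter because any automorphism of $\mathcal{A}_\theta$ acts trivially on $K_0$, as noted in Section~2.2). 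The group structure $K_0\cong\Z^3$ follows from the split exact sequence~\eqref{eq exact seq} together with $\ker(\id-\alpha^{-1}_{*1})\cong\Z$ (which holds because $I_2-A^{-1}$ has rank one). The trace range $\Z+\theta\Z$ then follows since $P_A$ contributes a class of trace $1\in\Z+\theta\Z$, so no new values appear.

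The only point requiring a little care — and the one I expect to be the main obstacle in writing this cleanly — is the claim that the transported element $[P_A]_0$ is genuinely a \emph{new} generator, i.e.\ that $\{[1]_0, i_*([p_\theta]_0), [P_A]_0\}$ actually generates all of $K_0(\mathcal{A}_\theta\rtimes_A\Z)\cong\Z^3$ rather than a proper subgroup. This is handled by the earlier proposition via the diagram $\delta_0([P_{U_1,w}]_0)=[U_1]_1$, which shows $[P_{U_1,w}]_0$ maps onto a generator of $\ker(\id-\alpha^{-1}_{*1})$ under $\delta_0$; combined with $i_*$ surjecting onto the kernel of $\delta_0$ from $K_0(\mathcal{A}_\theta)=\Z[1]_0\oplus\Z[p_\theta]_0$, an elementary diagram chase on the split exact sequence gives generation. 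I would therefore spell out this last chase explicitly and otherwise simply cite the three propositions above, so that the proof of Theorem~\ref{thm:K} is essentially bookkeeping. Since nothing genuinely new needs to be proved, the ``proof'' can be kept to a few lines: assemble the $K_1$ statements from the first proposition, the $K_0$ statements from the second and fourth propositions (via the conjugation reduction), and the trace statements from uniqueness of the trace plus the explicit computation $\tau_A(P_{U_1,w})=1$ already carried out.
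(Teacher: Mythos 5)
Your proposal is correct and follows essentially the same route as the paper: Theorem \ref{thm:K} is not given a separate proof there but is exactly the assembly of the preceding propositions (the $K_1$ computation, the full-rank case where $i_*$ is an isomorphism, the elementary $\tr(A)=2$ case with $\delta_0([P_{U_1,w}]_0)=[U_1]_1$ and $\tau_A(P_{U_1,w})=1$, and the $SL_2(\Z)$-conjugation defining $P_A$), with the trace values coming from uniqueness of the trace on $\mathcal{A}_\theta$ just as you argue. Your explicit diagram chase for generation and the remark that $\alpha_Q$ acts trivially on $K_0(\mathcal{A}_\theta)$ only spell out steps the paper leaves implicit.
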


\begin{cor}\label{cor:3}
Let $\theta$ be an irrational number and $A\in SL_2(\Z)$ be a matrix of infinite order. Then the crossed product $\mathcal{A}_{\theta}\rtimes_A\Z$ is an AH algebra with real rank zero and no dimension growth. Moreover, it is an AT algebra if and only if either $\tr(A)=3$ or $\tr(A) = 2$ and the greatest common divisor of the entries in $I_2-A^{-1}$ is one.
\end{cor}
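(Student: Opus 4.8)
The plan is to read the statement off from Theorem~\ref{thm:structure}, Theorem~\ref{thm:K}, and the classification theory for simple $C^*$-algebras of tracial rank zero satisfying the UCT. For the first assertion, Theorem~\ref{thm:structure} shows that $\mathcal{A}_\theta\rtimes_A\Z$ is a unital, simple, separable, nuclear, monotracial $C^*$-algebra of tracial rank zero satisfying the UCT. By Lin's classification theorem for such algebras, together with the range results of Elliott and Gong for real rank zero AH algebras, any algebra of this kind is $*$-isomorphic to a unital simple AH algebra of real rank zero with no dimension growth; moreover two such algebras are $*$-isomorphic if and only if their scaled ordered $K_0$-groups and their $K_1$-groups agree, the trace data being determined by $K_0$ in the real rank zero case. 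This gives the first claim.

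For the second assertion I would first argue that $\mathcal{A}_\theta\rtimes_A\Z$ is an AT algebra if and only if $K_1(\mathcal{A}_\theta\rtimes_A\Z)$ is torsion-free. The ``only if'' direction is immediate: for an AT algebra $K_1$ is a direct limit of finitely generated free abelian groups, hence torsion-free. For the converse, suppose $K_1(\mathcal{A}_\theta\rtimes_A\Z)$ is torsion-free. By the first part, $\mathcal{A}_\theta\rtimes_A\Z$ is a simple real rank zero AH algebra with no dimension growth, so $(K_0,K_0^+,[1]_0)$ is a simple dimension group with order unit; by Theorem~\ref{thm:K} it is just $\Z^2$ or $\Z^3$ equipped with the strict order induced by the unique trace, so it is already the scaled ordered $K_0$-group of a real rank zero AT algebra (for instance that of $\mathcal{A}_\theta$ itself when $\tr(A)\ne 2$). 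Elliott's description of the range of the invariant for real rank zero AT algebras then produces a real rank zero AT algebra $B$ with the same scaled ordered $K_0$-group and the same (torsion-free) $K_1$-group as $\mathcal{A}_\theta\rtimes_A\Z$; since $B$ has tracial rank zero and satisfies the UCT, the uniqueness part of the classification gives $\mathcal{A}_\theta\rtimes_A\Z\cong B$, so it is an AT algebra.

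It remains to convert torsion-freeness of $K_1$ into the stated condition on $A$ via Theorem~\ref{thm:K}. The infinite-order hypothesis excludes $\tr(A)\in\{0,\pm1\}$ and $A=\pm I_2$, so we are in one of the two cases of Theorem~\ref{thm:K}. If $\tr(A)\notin\{0,\pm1,2\}$, then $K_1\cong\Z^2\oplus\Z_{h_1}\oplus\Z_{h_2}$ with $h_1h_2=\lvert\det(I_2-A^{-1})\rvert=\lvert 2-\tr(A)\rvert$; this is torsion-free if and only if $h_1=h_2=1$, i.e. $\lvert 2-\tr(A)\rvert=1$, and since $\tr(A)=1$ would force finite order the only possibility is $\tr(A)=3$. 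If $\tr(A)=2$, then $K_1\cong\Z^3\oplus\Z_{h_1}$, where $h_1$ is the first elementary divisor of $I_2-A^{-1}$, that is, the greatest common divisor of its entries; this is torsion-free exactly when that gcd equals $1$. Putting the two cases together yields precisely the asserted dichotomy. The only non-formal input is the classification and range machinery, and the step requiring the most care is to invoke it in the form that makes the AT property depend on $K_1$ alone --- concretely, to check that in our situation the ordered $K_0$-group with order unit is genuinely that of an AT algebra of real rank zero, so that torsion in $K_1$ is the sole obstruction.
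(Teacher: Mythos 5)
Your proposal is correct and follows essentially the same route as the paper: structural properties from Theorem \ref{thm:structure}, classification machinery to reduce the AT question to torsion-freeness of the $K$-groups, and then the identical Smith-normal-form analysis of $I_2-A^{-1}$ via Theorem \ref{thm:K} (with $\tr(A)=1$ excluded by the infinite-order hypothesis). The only difference is that the paper obtains both the AH/real-rank-zero/no-dimension-growth statement and the criterion ``AT if and only if the $K$-groups are torsion-free'' directly from a single cited proposition of Lin, whereas you re-derive that criterion by hand through Elliott's range theorem for real rank zero AT algebras and the uniqueness part of the classification, which is a legitimate but more laborious justification of the same step.
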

\begin{proof}
We saw in Theorem \ref{thm:structure} that the crossed product is simple, nuclear, has tracial rank zero, and satisfies the UCT. Then it follows from \cite[Proposition 3.7]{everysimple} that the crossed product $\mathcal{A}_{\theta}\rtimes_A\Z$ is an AH algebra with real rank zero and no dimension growth. Furthermore, the same proposition shows that the crossed product is an AT algebra if and only if its $K$-groups are torsion-free. By Theorem \ref{thm:K}, $K_0(\mathcal{A}_\theta \rtimes _A \Z ) $ is torsion-free if and only if the Smith normal form of $I_2-A^{-1}$ is equal to either $\diag(1,0)$ or $\diag(1,1)$. The former corresponds to the case $\tr(A) = 2$ and $h_1 = 1$. The algorithm of computing the Smith normal form asserts that $h_1$ is precisely the greatest common divisor of the entries. On the other hand, if the Smith normal form of $I_2 -A^{-1}$ is equal to $\diag(1,1)$, then $|2-\tr(A)| = 1$. Since $A$ has infinite order, this happens if and only if $\tr(A) = 3$.
\end{proof}

To apply the classification theorem, it is necessary to describe the order structure of the $K_0$-group. It is shown in \cite{TAF} that a unital simple separable $C^*$-algebra with tracial rank zero has real rank zero, stable rank one, weakly unperforated $K_0$-group and is quasidiagonal. Since a weakly unperforated group is unperforated if and only if it is torsion-free, the $K_0(\mathcal{A}_{\theta}\rtimes_A \Z)$ for our cases is unperforated.

The following proposition is well-known to experts. We include a proof only for the reader's convenience.

\begin{prop} \label{prop:order-iso}
Let $\mathcal{A}$ and $\mathcal{B}$ be unital simple $C^*$-algebras with weakly unperforated $K_0$-groups and with unique tracial states $\tau_{\mathcal{A}}$ and $\tau_{\mathcal{B}}$ respectively. Then a group isomorphism 
$$
f:K_0(\mathcal{A})\rightarrow K_0(\mathcal{B})
$$
is an order isomorphism if $
(\tau_{\mathcal{B}})_*\circ f= k(\tau_{\mathcal{A}})_*$ for some $k>0$.
\end{prop}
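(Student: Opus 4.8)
The hypothesis and the conclusion are symmetric in $\mathcal A$ and $\mathcal B$: the inverse isomorphism $f^{-1}$ satisfies $(\tau_{\mathcal A})_*\circ f^{-1}=k^{-1}(\tau_{\mathcal B})_*$ with $k^{-1}>0$. So it will suffice to prove that $f$ is positive, i.e.\ $f\big(K_0(\mathcal A)_+\big)\subseteq K_0(\mathcal B)_+$; applying the same assertion to $f^{-1}$ then yields the reverse inclusion, and a bijective group homomorphism with $f(K_0(\mathcal A)_+)=K_0(\mathcal B)_+$ is an order isomorphism. The plan is to reduce everything to the following description of the positive cone, which I would isolate as a lemma: \emph{for a unital simple $C^*$-algebra $\mathcal C$ with weakly unperforated $K_0$-group and unique tracial state $\tau_{\mathcal C}$,}
\[
K_0(\mathcal C)_+=\{0\}\cup\{\,x\in K_0(\mathcal C):(\tau_{\mathcal C})_*(x)>0\,\}.
\]
This is the axiomatic form of the fact, recalled earlier for $\mathcal A_\theta$, that $(\tau_\theta)_*$ is an order isomorphism of $K_0(\mathcal A_\theta)$ onto $\Z+\theta\Z\subseteq\R$.

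\textbf{Deducing the proposition from the lemma.}
Granting the lemma, the proof is immediate. Let $x\in K_0(\mathcal A)_+$. If $x=0$ then $f(x)=0\in K_0(\mathcal B)_+$. If $x\neq 0$, the lemma applied to $\mathcal A$ gives $(\tau_{\mathcal A})_*(x)>0$, hence $(\tau_{\mathcal B})_*\big(f(x)\big)=k\,(\tau_{\mathcal A})_*(x)>0$; since $f$ is injective we also have $f(x)\neq 0$, so the lemma applied to $\mathcal B$ gives $f(x)\in K_0(\mathcal B)_+$. This shows $f\big(K_0(\mathcal A)_+\big)\subseteq K_0(\mathcal B)_+$, and we are done by the symmetry remark.

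\textbf{Proving the lemma, and the main obstacle.}
The inclusion ``$\subseteq$'' is routine: a tracial state on a simple $C^*$-algebra is faithful, so for a projection $p\in M_n(\mathcal C)$ the number $(\tau_{\mathcal C})_*([p]_0)=(\tau_{\mathcal C}\otimes\Tr)(p)$ is $\ge 0$, and vanishes only if $p=0$, i.e.\ only if $[p]_0=0$. The content is in ``$\supseteq$'', which is the step I expect to be the real obstacle. There I would first use simplicity: a nonzero projection $p\in M_n(\mathcal C)$ is full in $M_n(\mathcal C)$, so $[1]_0\le m[p]_0$ in $K_0(\mathcal C)$ for some $m\in\N$; hence every nonzero element of $K_0(\mathcal C)_+$ is an order unit, i.e.\ $(K_0(\mathcal C),K_0(\mathcal C)_+,[1]_0)$ is a \emph{simple} ordered abelian group with order unit, and it is weakly unperforated by hypothesis. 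I would then quote the standard fact that for a simple unital $C^*$-algebra, weak unperforation of $K_0$ is equivalent to the strict positive cone being detected by the states on $(K_0(\mathcal C),[1]_0)$, i.e.\ $K_0(\mathcal C)_+\setminus\{0\}=\{\,x:s(x)>0\ \text{for every state }s\,\}$; together with the fact that the unique tracial state induces the unique state on $(K_0(\mathcal C),[1]_0)$ (here using that $\mathcal C$ is exact, e.g.\ nuclear, so that its $2$-quasitraces are traces), this gives ``$\supseteq$''. The delicate point throughout is exactly this passage from $(\tau_{\mathcal C})_*$-positivity to $K_0$-positivity: it is where simplicity and weak unperforation genuinely enter, and it is the piece one should cite from the structure theory of ordered $K_0$-groups of simple $C^*$-algebras rather than reprove.
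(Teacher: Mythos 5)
Your proposal is correct and takes essentially the same route as the paper: after the identical symmetry reduction, the paper also passes from strict positivity under the (unique) trace to the statement that an integer multiple of the image class is an order unit (citing Goodearl and R{\o}rdam/Blackadar--R{\o}rdam/Haagerup, i.e.\ exactly the ``states detect strict positivity'' plus ``unique state from the unique trace via quasitraces'' machinery you invoke) and then removes the multiple by weak unperforation. Your cone-description lemma is just a repackaging of that step, and your explicit caveat about exactness so that $2$-quasitraces are traces corresponds to the paper's implicit reliance on Haagerup's theorem through its citations.
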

\begin{proof}
Since $f$ is a group isomorphism, by symmetry it is sufficient to show that $f(K_0(\mathcal{A})^+)\subseteq K_0(\mathcal{B})^+$. For any nonzero projection $p\in\mathcal{P}_{\infty}(\mathcal{A})$, by assumption we have
$$
(\tau_{\mathcal{B}})_*\circ f ([p]_0)=k(\tau_{\mathcal{A}})_*([p]_0)= k\tau_\mathcal{A}(p) > 0
$$
(note that $\tau_{\mathcal{A}}$ is faithful since $\mathcal{A}$ is simple). It follows from Theorem 4.12 in \cite{Good} and Theorem 5.2.2 in \cite{Ror} (which is proved by \cite{BR92} and \cite{Haar}) that there exists a positive integer $n\in \N$ such that $nf([p]_0)$ is an order unit for $K_0(\mathcal{B})$. In particular $nf([p]_0)$ is nonzero. Since $K_0(\mathcal{B})$ is weakly unperforated, we conclude that $f([p]_0)$ belongs to $K_0(\mathcal{B})^+$ .
\end{proof}

Below we recall the definition of the Elliott invariant of a $C^*$-algebra. Here we restrict ourselves to the stably finite case.

\begin{defn} \label{defn:Ell} \cite[Definition 2.2.6]{Ror02}
	Let $\mathcal{A}$ be a unital simple separable $C^*$-algebra with nonempty trace space $T(\mathcal{A})$. The \emph{Elliott invariant} of $\mathcal{A}$, written as $Ell(\mathcal{A})$, is the 6-tuple
	$$
	( K_0(\mathcal{A}), K_0(\mathcal{A})^+, [1_\mathcal{A}]_0, K_1(\mathcal{A}), T(\mathcal{A}), r_\mathcal{A}:T(\mathcal{A})\to S(K_0(\mathcal{A})) ),
	$$
	where $S(K_0(\mathcal{A}))$ is the state space of $K_0(\mathcal{A})$. 
	
	We say two $C^*$-algebras $\mathcal{A}$ and $\mathcal{B}$ have \emph{isomorphic} Elliott invariants if there is an ordered group isomorphism $$
	\alpha_0:(K_0(\mathcal{A}), K_0(\mathcal{A})^+, [1_\mathcal{A}]_0)\to (K_0(\mathcal{B}), K_0(\mathcal{B})^+, [1_\mathcal{B}]_0),
	$$
	a group isomorphism $\alpha_1: K_1(\mathcal{A})\to K_1(\mathcal{B})$, and an affine homeomorphism $\gamma: T(\mathcal{B})\to T(\mathcal{A})$ such that $r_\mathcal{A}\circ \gamma = \widehat{\alpha}_0\circ r_\mathcal{B}$, where $\widehat{\alpha}_0:S(K_0(\mathcal{B}))\to S(K_0(\mathcal{A}))$ is defined by $\widehat{\alpha}_0(f) = f\circ \alpha_0$.
\end{defn}

\begin{thm} \label{thm:main_iso}
	Let $\theta,\theta'$ be irrational numbers and $A,B\in SL_2(\Z)$ be matrices of infinite order. Then the following are equivalent:
	\begin{enumerate}
		\item $\mathcal{A}_{\theta} \rtimes_A \Z$ and $\mathcal{A}_{\theta'} \rtimes_B \Z$ are $*$-isomorphic;
		\item $\mathrm{Ell}( \mathcal{A}_{\theta} \rtimes_A \Z )$ and $\mathrm{Ell}( \mathcal{A}_{\theta'} \rtimes_B \Z )$ are isomorphic;
		\item $\theta = \pm \theta' \pmod \Z$ and $I - A^{-1}\sim_{eq} I - B^{-1}$.
	\end{enumerate}
\end{thm}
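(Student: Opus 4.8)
The plan is to prove the chain of implications $(1) \Rightarrow (2) \Rightarrow (3) \Rightarrow (1)$. The implication $(1) \Rightarrow (2)$ is trivial, since a $*$-isomorphism of $C^*$-algebras induces an isomorphism of Elliott invariants by functoriality. The heart of the argument lies in $(2) \Rightarrow (3)$ and $(3) \Rightarrow (1)$, and both rely essentially on the $K$-theory computation in Theorem~\ref{thm:K} together with the classification theorem for simple separable unital nuclear $C^*$-algebras with tracial rank zero satisfying the UCT (which applies by Theorem~\ref{thm:structure}).

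For $(2) \Rightarrow (3)$: suppose the Elliott invariants are isomorphic. First I would extract the angle. From Theorem~\ref{thm:K} the pairing of the unique trace with $K_0$ gives the subgroup $\Z + \theta\Z$ of $\R$, and the ordered-group structure on $K_0$ with distinguished order unit $[1]_0$ pins down this subgroup up to the order isomorphisms of $\Z+\theta\Z$ fixing $1$; as recalled in Section~2.2 (and in Proposition~\ref{prop:order-iso}) these force $\Z + \theta\Z = \Z + \theta'\Z$ as ordered groups, which by a standard argument gives $\theta' = \pm\theta \pmod \Z$. Next I would recover the matrix data: since $\tr(A), \tr(B) \notin \{0,\pm 1\}$ and the two possibilities $\tr = 2$ versus $\tr \notin\{0,\pm1,2\}$ are distinguished by whether $\rank(K_0) = 3$ or $2$ (equivalently by the torsion-free part of $K_1$ having rank $4$ or $3$), the isomorphism of Elliott invariants forces $A$ and $B$ to fall in the same case. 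In either case, $K_1$ is of the form $\Z^{\text{(free part)}} \oplus \Z_{h_1} (\oplus \Z_{h_2})$, and the isomorphism of $K_1$-groups forces the torsion subgroups to agree, i.e.\ the Smith normal forms of $I - A^{-1}$ and $I - B^{-1}$ coincide. By the uniqueness of the Smith normal form (the theorem from \cite{Int} quoted in Section~2.1), $I - A^{-1} \sim_{eq} I - B^{-1}$.

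For $(3) \Rightarrow (1)$: assume $\theta' = \pm\theta \pmod \Z$ and $I - A^{-1} \sim_{eq} I - B^{-1}$. Since $\mathcal{A}_\theta \cong \mathcal{A}_{-\theta} \cong \mathcal{A}_{\theta \bmod \Z}$ canonically and this isomorphism intertwines the $SL_2(\Z)$-actions appropriately, I may reduce to the case $\theta = \theta'$. Now I would construct an isomorphism of Elliott invariants by hand using the explicit generators from Theorem~\ref{thm:K}: map $[1]_0 \mapsto [1]_0$, $i_*([p_\theta]_0) \mapsto i_*([p_\theta]_0)$, and (in the $\tr = 2$ case) $[P_A]_0 \mapsto [P_B]_0$. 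Since all these generators have the same trace values ($1$, $\theta$, $1$ respectively), this $K_0$-map is compatible with the trace pairing, hence by Proposition~\ref{prop:order-iso} it is an order isomorphism preserving the order unit. On $K_1$, the equivalence $P(I-A^{-1})Q = I - B^{-1}$ exhibits an explicit isomorphism $\Z^2/\mathrm{im}(I-A^{-1}) \cong \Z^2/\mathrm{im}(I-B^{-1})$, which combined with the identity on the free $\delta_1$-image summand (both are copies of $K_0(\mathcal{A}_\theta) \cong \Z^2$) gives an isomorphism $K_1(\mathcal{A}_\theta \rtimes_A \Z) \cong K_1(\mathcal{A}_\theta \rtimes_B \Z)$; since the trace spaces are singletons the compatibility condition $r_\mathcal{A} \circ \gamma = \widehat{\alpha}_0 \circ r_\mathcal{B}$ is automatic once the $K_0$-part is correct. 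Thus the Elliott invariants are isomorphic, and by the classification theorem the crossed products are $*$-isomorphic.

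The main obstacle I anticipate is not conceptual but bookkeeping: in $(3) \Rightarrow (1)$ one must make sure the splittings of the Pimsner--Voiculescu short exact sequences are chosen compatibly on both sides so that the prescribed assignment on generators genuinely extends to a well-defined group isomorphism of $K_0$ (and of $K_1$) — in particular that $[P_A]_0$ and $[P_B]_0$ really do serve as the ``same'' extra generator lifting the kernel class. This is where the naturality of the Pimsner--Voiculescu sequence and the explicit construction of $P_A$ via the conjugating matrix $Q$ (as in the discussion preceding Theorem~\ref{thm:K}) do the work, and it should be handled carefully but presents no serious difficulty given the groundwork already laid.
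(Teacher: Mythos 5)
Your overall route is the same as the paper's: use Theorem \ref{thm:structure} plus Lin's classification to reduce (1)$\Leftrightarrow$(2), read off $\theta=\pm\theta'\pmod\Z$ from the trace image of $K_0$ and the matrix condition from $K_1$ via Theorem \ref{thm:K}, and for the converse build an isomorphism of invariants on the explicit generators and invoke Proposition \ref{prop:order-iso}. The one place you genuinely deviate is the opening move of $(3)\Rightarrow(1)$, and that is where there is a gap: the claim that $\mathcal{A}_\theta\cong\mathcal{A}_{-\theta}\cong\mathcal{A}_{\theta+1}$ ``canonically and this isomorphism intertwines the $SL_2(\Z)$-actions appropriately'' is not true as stated and is not proved. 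For $\theta'=\theta+1$ the algebras are literally equal, but the automorphisms $\alpha_A$ built from the parameters $\theta$ and $\theta+1$ differ by the signs $(-1)^{ac}$ and $(-1)^{bd}$ on $U_1,U_2$, so the identity map does \emph{not} intertwine them; to get an isomorphism of crossed products you would have to exhibit a conjugating (gauge) automorphism, which amounts to solving a mod-$2$ linear system governed by $A$ and is exactly the kind of bookkeeping you defer. For $\theta'=-\theta$ the flip $U_1\leftrightarrow U_2$ does intertwine exactly, but it replaces $A$ by $EAE$ with $E=\begin{pmatrix}0&1\\1&0\end{pmatrix}$, so ``reduce to $\theta=\theta'$ with the same matrices'' is not quite what you get (harmless, since $I-(EAE)^{-1}\sim_{eq}I-A^{-1}$, but it must be said). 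The paper avoids this entirely: it does not reduce at the level of algebras but assumes $\theta=1-\theta'$ and absorbs the sign into the $K_0$-map by sending $i_*[p_\theta]_0\mapsto i_*[1]_0-i_*[p_{\theta'}]_0$, using $\tau(1-p_{\theta'})=\theta$; you should either adopt that twist or actually carry out the equivariance argument you are implicitly relying on.

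Two smaller points. In $(2)\Rightarrow(3)$ your parenthetical ``torsion-free part of $K_1$ having rank $4$ or $3$'' is off by one: by Theorem \ref{thm:K} the free ranks are $3$ (when $\tr(A)=2$) and $2$ (otherwise); your primary criterion $\operatorname{rank}K_0=3$ versus $2$ is correct, so the case separation, and hence the identification of the Smith normal forms from the torsion subgroups, still goes through. Finally, your remark that the compatibility $r_\mathcal{A}\circ\gamma=\widehat{\alpha}_0\circ r_\mathcal{B}$ is ``automatic'' should be phrased as in the paper: it holds because your $K_0$-map matches the trace values of the generators ($1$, $\theta$, $1$), i.e.\ $(\tau_\mathcal{D})_*\circ f=(\tau_\mathcal{C})_*$, which is precisely the hypothesis needed both for this compatibility and for Proposition \ref{prop:order-iso}.
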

\begin{proof}
	$(1)\Longrightarrow (2)$: This is obvious.\\
	$(2)\Longrightarrow (1)$: Since the crossed products satisfy the UCT, this follows from the classification theorem of tracially AF algebras by Lin \cite[Theorem 5.2]{Lin04}.\\
	$(2)\Longrightarrow (3)$: We get the condition on $\theta$ and $\theta'$ by comparing the images of $K_0$ under the (unique) tracial state. The condition on $A$ and $B$ follows from $K_1$.\\
	$(3)\Longrightarrow (2)$: Let us write $\mathcal{C} := \mathcal{A}_{\theta}\rtimes_A\Z$ and $\mathcal{D} := \mathcal{A}_{\theta'}\rtimes_B\Z$. By the previous proposition, we need only find a group isomorphism $f:K_0(\mathcal{C})\to  K_0(\mathcal{D})$ satisfying $
	(\tau_{\mathcal{D}})_*\circ f= (\tau_{\mathcal{C}})_*$ and $f([1_\mathcal{C}]_0) = [1_\mathcal{D}]_0$. Without loss of generality we may assume $\theta=1-\theta'$. Since $I - A^{-1}\sim_{eq} I - B^{-1}$, they have a common Smith normal form. In the case that $\tr(A)\notin \{0,\pm 1, 2\}$, define $f:K_0(\mathcal{C})\to  K_0(\mathcal{D})$ by
$$
f([i(1_{\mathcal{A}_{\theta}})]_0)=[i(1_{\mathcal{A}_{\theta'}})]_0,\quad f([i(p_{\theta})]_0)=[i(1_{\mathcal{A}_{\theta'}})]_0-[i(p_{\theta'})]_0.
$$
If $\tr(A) = 2$ then define $f:K_0(\mathcal{C})\to  K_0(\mathcal{D})$ by
$$
f([i(1_{\mathcal{A}_{\theta}})]_0)=[i(1_{\mathcal{A}_{\theta'}})]_0,\quad f([i(p_{\theta})]_0)=[i(1_{\mathcal{A}_{\theta'}})]_0-[i(p_{\theta'})]_0,\quad f([P_A]_0)=[P_B]_0.
$$
\end{proof}

\begin{rem}
We discuss an interesting corollary of Theorem \ref{thm:K}. Suppose $A\in SL_2(\Z)$ has infinite order and $\tr(A)=3$. By Corollary \ref{cor:3} the crossed product $\mathcal{A}_{\theta}\rtimes_A\Z$ is an AT algebra. Moreover, from Proposition \ref{prop:order-iso} and the proof of Theorem \ref{thm:K} we see that the map
$$
i_*:K_0(\mathcal{A}_{\theta})\rightarrow K_0(\mathcal{A}_{\theta}\rtimes_A\Z)
$$
is an order isomorphism (which maps $[1]_0$ to $[1]_0$). Since the index map 
$$
\delta_1:K_1(\mathcal{A}_{\theta}\rtimes_A\Z)\rightarrow K_0(\mathcal{A}_{\theta})\cong K_1(\mathcal{A}_{\theta})
$$
is a group isomorphism, we conclude from the classification of tracially AF algebras that $\mathcal{A}_{\theta}\rtimes_A\Z\cong \mathcal{A}_{\theta}$.
\end{rem}

\begin{rem}
We continue to assume $\tr(A) = 3$. Using the fact that $\mathcal{A}_{\theta}\rtimes_A\Z\cong \mathcal{A}_{\theta}$ we can construct an inductive sequence in the following way. Let $\mathcal{B}_0=\mathcal{A}_{\theta}$ and $\mathcal{B}_1 = \mathcal{A}_\theta \rtimes_A \Z$. As $\mathcal{B}_1$ is isomorphic to $\mathcal{A}_\theta$, there exists an automorphism $\alpha_1$ on $\mathcal{B}_1$ such that the crossed product $\mathcal{B}_1\rtimes_{\alpha_1} \Z =: \mathcal{B}_2$ is again isomorphic to $\mathcal{A}_\theta$. Iterating this procedure, we obtain an inductive sequence
$$
\begin{CD}
\mathcal{B}_0@>i>>\mathcal{B}_1@>i>>\mathcal{B}_2@>i>>\cdots
\end{CD},
$$
where all the connecting maps are inclusions. Let us write $\mathcal{B}$ for the limit of this sequence. Then $K_0(\mathcal{B})$ is isomorphic to $\Z+\Z\theta$ as an ordered abelian group and $K_1(\mathcal{B})$ vanishes. Since $\mathcal{B}$ is an AT algebra with real rank zero, by \cite{Ell93} (see also \cite[Theorem 3.2.6]{Ror02}) it is classified by the $K$-groups. Comparing the $K$-groups directly, we see that $\mathcal{B}$ is isomorphic to the AF algebra constructed by Effros and Shen in \cite{ES} (it was shown in \cite{PV80} that the ordered $K_0$-group of this AF algebra is isomorphic to $\Z+\Z\theta$).
\end{rem}

\begin{rem}
According to Theorem \ref{thm:main_iso}, for a fixed irrational number $\theta$ and matrices $A$ and $B\in SL_2(\Z)$ with different traces, the resulting crossed products might still be isomorphic. This is because $I_2-A^{-1}$ and $I_2-B^{-1}$ might still have the same Smith normal form. For example, take
$$
A=\begin{pmatrix}
-1& \phantom{-}1\\
 \phantom{-}0&-1
\end{pmatrix}\ {\rm and\ }
B=\begin{pmatrix}
2&1\\
 7&4
\end{pmatrix}.
$$
Then $I_2-A^{-1}$ and $I_2-B^{-1}$ have the common Smith normal form $\diag(1,4)$. In general, $I_2-A^{-1}$ and $I_2-B^{-1}$ have the common Smith normal form if and only if
\begin{enumerate}
\item The greatest common divisor of the entries of $I_2-A^{-1}$ equals to that of $I_2-B^{-1}$ up to sign, and
\item $\vert 2-\tr(A)\vert=\vert 2-\tr(B)\vert$.
\end{enumerate}

In particular, having the same trace does not guarantee the resulting crossed products to be isomorphic. For example, if we take
$$
A=\begin{pmatrix}
4&9\\
 3&7
\end{pmatrix}\ {\rm and\ }
B=\begin{pmatrix}
1&9\\
 1&10
\end{pmatrix},
$$
then $I_2-A^{-1}$ has the Smith normal form $\diag(3,3)$, while $I_2-B^{-1}$ has the Smith normal form $\diag(1,9)$.
\end{rem}

We close this section by giving a set of generators for $K_1(\mathcal{A}_{\theta}\rtimes_A\Z)$. Recall that we have the split short exact sequence
\begin{equation*}
0 \longrightarrow \bigslant{ K_1(\mathcal{A}_{\theta})}{\mathrm{im}(\id-\alpha^{-1}_{*1})}\stackrel{i_*}{\longrightarrow} K_1(\mathcal{A}_{\theta}\rtimes_A\Z) \stackrel{\delta_1}{\longrightarrow} K_0(\mathcal{A}_{\theta}) \longrightarrow 0.
\end{equation*}
We first consider the question of computing preimages of the index map $\delta_1$. The next theorem is essentially the same as \cite[Proposition 3.2.7]{Thesis}.
	
\begin{thm}\label{thm:index}
Let $\alpha \in \Aut(\mathcal{A}_{\theta})$ and $w$ be the implementing unitary in $\mathcal{A}_\theta\rtimes_\alpha \Z$. Let 
$$
\delta_1:K_1(\mathcal{A}_{\theta}\rtimes_{\alpha}\Z)\rightarrow K_0(\mathcal{A}_{\theta})
$$
be the index map in the Pimsner-Voiculescu sequence. Suppose $p$ is a projection in $\mathcal{A}_{\theta}$ and $s$ is a partial isometry in $\mathcal{A}_\theta$ such that $s^*s=p$ and $ss^*=\alpha^{-1}(p)$. Define
$$
y=s^*w^*+(1-p)\in\mathcal{A}_{\theta}\rtimes_{\alpha}\Z.
$$
Then $y$ is a unitary and $\delta_1([y]_1)=[p]_0$.
\end{thm}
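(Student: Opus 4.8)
The plan is to treat the two assertions separately. Showing that $y$ is a unitary is a direct computation in $\mathcal{A}_{\theta}\rtimes_{\alpha}\Z$; the value of $\delta_1([y]_1)$ we obtain by lifting $y$ to the Pimsner--Voiculescu Toeplitz algebra and invoking the standard description of the index (boundary) map of an extension. (Since this proposition is asserted to be essentially \cite[Proposition 3.2.7]{Thesis}, one could alternatively just cite that reference after matching conventions, as it is proved there in exactly this generality.)

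For the first part, I would first record the identities used repeatedly. From $s^*s=p$, $ss^*=\alpha^{-1}(p)$ and $ss^*s=s$ one gets $sp=s$, $ps^*=s^*$, $\alpha^{-1}(p)s=s$ and $s^*\alpha^{-1}(p)=s^*$; and from $waw^*=\alpha(a)$ one gets $wa=\alpha(a)w$ and $aw=w\alpha^{-1}(a)$. Using $y^*=ws+(1-p)$ and expanding $y^*y$ and $yy^*$, every mixed term vanishes (e.g. $ws(1-p)=w(s-sp)=0$ and $s^*w^*(1-p)=s^*(1-\alpha^{-1}(p))w^*=0$), while $wss^*w^*=w\alpha^{-1}(p)w^*=p$ and $s^*w^*ws=s^*s=p$; hence $y^*y=p+(1-p)=1=yy^*$. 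This is the routine part.

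For the second part, recall the Pimsner--Voiculescu extension $0\to\mathcal{A}_{\theta}\otimes\mathcal{K}\to\mathcal{T}_{\alpha}\stackrel{q}{\longrightarrow}\mathcal{A}_{\theta}\rtimes_{\alpha}\Z\to 0$, in which $\mathcal{T}_{\alpha}$ contains a copy of $\mathcal{A}_{\theta}$ together with an isometry $v$ satisfying $q(v)=w$, $v^*v=1$ and $vv^*=1-e$, where $e$ lies in the ideal, commutes with $\mathcal{A}_{\theta}\subseteq\mathcal{T}_{\alpha}$, and corresponds to $1_{\mathcal{A}_{\theta}}\otimes e_{00}$ under $\mathcal{A}_{\theta}\otimes\mathcal{K}\cong\mathcal{A}_{\theta}\otimes\mathcal{K}(\ell^2(\N_0))$; one also has $va=\alpha(a)v$, $v^*a=\alpha^{-1}(a)v^*$, $av=v\alpha^{-1}(a)$ and $av^*=v^*\alpha(a)$ for $a\in\mathcal{A}_{\theta}$, and, under $K_0(\mathcal{A}_{\theta}\otimes\mathcal{K})\cong K_0(\mathcal{A}_{\theta})$, the map $\delta_1$ is precisely the index map of this extension (see \cite{PV}). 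Now set $\tilde y:=s^*v^*+(1-p)\in\mathcal{T}_{\alpha}$, so that $q(\tilde y)=y$. Running exactly the cancellations of the previous paragraph with the $v$-relations in place of the $w$-relations gives $\tilde y\tilde y^*=p+(1-p)=1$ and $\tilde y^*\tilde y=v\alpha^{-1}(p)v^*+(1-p)=p\,vv^*+(1-p)=p(1-e)+(1-p)=1-pe$. Thus $\tilde y$ is a lift of $y$ which is a partial isometry, with $1-\tilde y^*\tilde y=pe$ and $1-\tilde y\tilde y^*=0$ both in the ideal, so the standard formula for the index map of an extension gives $\delta_1([y]_1)=[1-\tilde y^*\tilde y]_0-[1-\tilde y\tilde y^*]_0=[pe]_0$, and $pe$ corresponds to $p\otimes e_{00}$, i.e. $[pe]_0=[p]_0$ in $K_0(\mathcal{A}_{\theta})$.

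The only genuine difficulty is convention bookkeeping: one must fix compatibly (i) the direction in which $w$ implements $\alpha$, (ii) which element of $\mathcal{T}_{\alpha}$ maps onto $w$ and the sign in $vv^*=1-e$, and (iii) the sign in the index-map formula $\delta_1([u]_1)=[1-\tilde u^*\tilde u]_0-[1-\tilde u\tilde u^*]_0$, so that the three combine to produce $+[p]_0$ rather than $-[p]_0$. A convenient check that pins down the conventions is the degenerate case $p=1$, $s=1$: then $y=w^*$, $\tilde y=v^*$, the computation gives $1-\tilde y^*\tilde y=e$ and $1-\tilde y\tilde y^*=0$, so $\delta_1([w^*]_1)=[e]_0=[1_{\mathcal{A}_{\theta}}]_0$, which is consistent with the Pimsner--Voiculescu sequence (and in particular with $\id-\alpha^{-1}_{*0}=0$).
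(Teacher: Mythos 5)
Your proposal is correct and takes essentially the same route as the paper: unitarity of $y$ by the same cancellations, and then a partial-isometry lift of $y$ into the Pimsner--Voiculescu Toeplitz algebra combined with the standard index-map formula $\delta_1([u]_1)=[1-\tilde u^*\tilde u]_0-[1-\tilde u\tilde u^*]_0$, which the paper invokes via \cite[Proposition 9.2.3]{Ror} in the concrete picture $v=w\otimes S$. Your lift $\tilde y=s^*v^*+(1-p)$ is exactly the lift the paper intends (namely $s^*w^*\otimes S^*+(1-p)\otimes 1$), and your convention check in the case $p=s=1$, giving $\delta_1([w^*]_1)=[1]_0$, matches Corollary \ref{Ge}.
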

\begin{proof}
We first observe that
$$
p(s^*w^*)=s^*w^*=s^*\alpha^{-1}(p)w^*=s^*w^*(w\alpha^{-1}(p)w^*)=(s^*w^*)p
$$
and that
$$
p(ws)=w(w^*pw)s=w(\alpha^{-1}(p)s)=ws=(ws)p.
$$
Then we compute
\begin{align*}
		y^*y &= (ws+(1-p))(s^*w^*+(1-p)) \\
				&= w\alpha^{-1}(p)w^*+0+0+(1-p) \\
		&= 1,		
\end{align*}
\begin{align*}
		yy^* &= (s^*w^*+(1-p))(ws+(1-p)) \\
				&= s^*s+0+0+(1-p) \\
		&= 1.		
\end{align*}
Therefore $y$ is a unitary. To give a concrete recipe of the index map $\delta_1$, let us recall the Toeplitz extension. Let $\mathcal{T}$ denote the Toeplitz algebra, which is the universal $C^*$-algebra generated by an isometry $S$. Then there exists a short exact sequence
\begin{equation*}
0 \longrightarrow  \mathcal{K}\otimes\mathcal{A}_{\theta} \stackrel {\varphi}{\longrightarrow} \mathcal{T}_{\alpha}\stackrel{\psi}{\longrightarrow} \mathcal{A}_{\theta}\rtimes_{\alpha}\Z \longrightarrow 0,
\end{equation*}
where $\mathcal{T}_{\alpha}$ is the $C^*$-subalgebra of $(\mathcal{A}_{\theta}\rtimes_{\alpha}\Z)\otimes \mathcal{T}$ generated by $(\mathcal{A}_{\theta}\rtimes_{\alpha}\Z)\otimes 1$ and $w\otimes S$. The map $\varphi$ is given by the formula
$$
\varphi(e_{i,j}\otimes a)=\alpha^i(a)V^jP(V^*)^j\;\;\;\;\;\; (a\in\mathcal{A}_{\theta}),
$$
where $V=w\otimes S$ and $P=1-VV^*=1\otimes (1-SS^*)$, and the map $\psi$ is defined by setting $\psi(b\otimes 1)=b$ for all $b\in\mathcal{A}_{\theta}\rtimes_{\alpha}\Z$ and $\psi(w\otimes S)=w$.

We apply the second part of \cite[Proposition 9.2.3]{Ror} (due to Elliott) to the six-term exact sequence arising from the Toeplitz extension. Toward this end, define
$$
x=s^*w^*\otimes S+(1-p)\otimes 1\in\mathcal{T}_{\alpha}.
$$
then 
$$
\psi(x)=\psi(s^*\otimes 1)\psi(w^*\otimes S)+\psi((1-p)\otimes 1)=y,
$$
and $x$ is a partial isometry. Indeed, we compute
\begin{align*}
		x^*x &=p\otimes SS^*+(1-p)\otimes 1
\end{align*}
and 
\begin{align*}
		xx^* &= p\otimes 1+(1-p)\otimes 1 = 1\otimes 1		.
\end{align*}
Hence $1-xx^*=0$ and 
\begin{align*}
		1-x^*x &= (p\otimes 1)(1\otimes (1-SS^*)) \\
				&= \varphi(e_{0,0}\otimes p).	
\end{align*}
Since $K_0(\mathcal{A}_{\theta})$ is identified with $K_0(\mathcal{K}\otimes \mathcal{A}_{\theta})$ via the map $[q]_0 \mapsto [e_{0,0}\otimes q]_0$, we deduce from \cite[Proposition 9.2.3]{Ror} that
$$
\delta_1([y]_1)=[p]_0,
$$
as desired.
\end{proof}

Now let $p_\theta$ be a Rieffel projection in $\mathcal{A}_\theta$ such that $[1]_0$ and $[p_\theta]_0$ generate $K_0(\mathcal{A}_\theta)$. Since $\mathcal{A}_{\theta}$ has the cancellation property  (see for example \cite{Rie83}) and $\alpha^{-1}_{*0}=\id_{K_0(\mathcal{A}_{\theta})}$, there exist a partial isometry $s_{\theta}$ such that $s^*_{\theta}s_{\theta}=p_{\theta}$ and $s_{\theta}s^*_{\theta}=\alpha^{-1}(p_{\theta})$. 
\begin{cor}\label{Ge}
If we define
$$
y_{\theta}=s^*_{\theta}w^*+(1-p_{\theta})\in\mathcal{A}_{\theta}\rtimes_A\Z,
$$
then $y_{\theta}$ is a unitary with $\delta_1([y_{\theta}]_1)=[p_{\theta}]_0$. Moreover, $\delta_1([w^*]_1)=[1]_0$.
\end{cor}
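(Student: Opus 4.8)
The plan is to deduce the corollary directly from Theorem \ref{thm:index}, applying it twice — once to the projection $p_\theta$ and once to the unit $1_{\mathcal{A}_\theta}$.

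For the first assertion, I would invoke Theorem \ref{thm:index} with $\alpha = \alpha_A$, $p = p_\theta$, and $s = s_\theta$. The required hypotheses $s_\theta^* s_\theta = p_\theta$ and $s_\theta s_\theta^* = \alpha_A^{-1}(p_\theta)$ are precisely the properties for which $s_\theta$ was produced in the paragraph preceding the corollary: since $\alpha^{-1}_{*0} = \id_{K_0(\mathcal{A}_\theta)}$, the projections $p_\theta$ and $\alpha_A^{-1}(p_\theta)$ represent the same class in $K_0(\mathcal{A}_\theta)$, and the cancellation property of $\mathcal{A}_\theta$ promotes this equality of $K_0$-classes to an actual partial isometry implementing the Murray--von Neumann equivalence. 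Theorem \ref{thm:index} then immediately yields that $y_\theta = s_\theta^* w^* + (1 - p_\theta)$ is a unitary in $\mathcal{A}_\theta \rtimes_A \Z$ with $\delta_1([y_\theta]_1) = [p_\theta]_0$.

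For the second assertion, I would apply Theorem \ref{thm:index} once more, this time with $p = 1_{\mathcal{A}_\theta}$ and $s = 1_{\mathcal{A}_\theta}$; here $s^*s = 1_{\mathcal{A}_\theta} = ss^* = \alpha_A^{-1}(1_{\mathcal{A}_\theta})$ trivially, so the hypotheses are satisfied with no appeal to cancellation. The associated unitary is $y = 1_{\mathcal{A}_\theta} \cdot w^* + (1 - 1_{\mathcal{A}_\theta}) = w^*$, and the conclusion of the theorem reads $\delta_1([w^*]_1) = [1]_0$, as claimed.

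I do not anticipate any genuine obstacle: the statement is a repackaging of Theorem \ref{thm:index} for two specific choices of projection. The only point deserving a moment of attention is the existence of the partial isometry $s_\theta$, but this was already established using the cancellation property of $\mathcal{A}_\theta$ together with the triviality of $\alpha^{-1}_{*0}$, so the proof consists merely of checking that the hypotheses of Theorem \ref{thm:index} hold in each of the two cases.
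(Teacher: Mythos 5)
Your proposal is correct and follows the paper's own proof essentially verbatim: the first assertion is an immediate application of Theorem \ref{thm:index} with $p = p_\theta$, $s = s_\theta$ (whose existence was already secured via cancellation and the triviality of $\alpha^{-1}_{*0}$), and the second follows by taking $p = s = 1$, which makes the unitary of the theorem equal to $w^*$. No gap.
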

\begin{proof}
	The first assertion follows directly from the previous theorem. For the second statement, take $p= 1$ and $s=1$ in Theorem \ref{thm:index}.
\end{proof}

Now we consider the quotient $K_1(\mathcal{A}_{\theta})/\mathrm{im}(\id-\alpha^{-1}_{*1})$. According to the general theory of Smith normal norms, there exist matrices $P$ and $Q\in GL_2(\Z)$ such that
$$
P(I_2-A^{-1})Q=\diag(h_1,h_2),
$$
for some positive integers $h_1$, $h_2$ with $h_1$ dividing $h_2$ (in the case $I_2-A^{-1}$ is of rank $1$ we have $h_2=0$). We choose $[U_1]_1$ and $[U_2]_1$ to be the generators of $K_1(\mathcal{A}_{\theta})$. Then the quotient group $\bigslant{ K_1(\mathcal{A}_{\theta})}{\mathrm{im}(\diag(h_1,h_2)Q^{-1})}$ is generated by the classes represented by $[U_1']_1$ and $[U_2']_1$, where
 $$
  \begin{pmatrix}
 [U_1']_1\\
 [U_2']_1
\end{pmatrix}
=Q^{-1}
 \begin{pmatrix}
 [U_1]_1\\
 [U_2]_1
\end{pmatrix}.
 $$
Since there is a group isomorphism 
$$
\bigslant{ K_1(\mathcal{A}_{\theta})}{\mathrm{im}(P^{-1}\diag(h_1,h_2)Q^{-1})}\rightarrow \bigslant{ K_1(\mathcal{A}_{\theta})}{\mathrm{im}(\diag(h_1,h_2)Q^{-1})}
$$
given by multiplication of the matrix $P$, the quotient group  $K_1(\mathcal{A}_{\theta})/\mathrm{im}(\id-\alpha^{-1}_{*1})$ is then generated by the class represented by $[U_1'']_1$ and $[U_2'']_1$, where
\begin{equation}\label{vector}
 \begin{pmatrix}
 [U_1'']_1\\
 [U_2'']_1
\end{pmatrix}
=P^{-1}Q^{-1}
 \begin{pmatrix}
 [U_1]_1\\
 [U_2]_1
\end{pmatrix}.
\end{equation}
From these discussions we obtain the following description of a set of generators for $K_1(\mathcal{A}_{\theta}\rtimes_A\Z)$.
\begin{thm}\label{thm:K_1}
Let $\theta$ be an irrational number and $A\in SL_2(\Z)$ be a matrix of infinite order. Let $P$ and $Q$ be matrices in $GL_2(\Z)$ such that $P(I_2-A^{-1})Q=\diag(h_1,h_2)$ is the Smith normal form of $I_2-A^{-1}$ (with $h_2$ possibly being $0$). Then $K_1(\mathcal{A}_{\theta}\rtimes_A\Z)$ is generated by $[w^*]_1$, $[y_{\theta}]_1$ (given in Corollary \ref{Ge}) $[U_1'']_1$, and $[U_2'']_1$ (given by (\ref{vector}) above).
\end{thm}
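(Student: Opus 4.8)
The plan is to read off the generators directly from the split short exact sequence
\[
0 \longrightarrow \bigslant{ K_1(\mathcal{A}_{\theta})}{\mathrm{im}(\id-\alpha^{-1}_{*1})}\stackrel{i_*}{\longrightarrow} K_1(\mathcal{A}_{\theta}\rtimes_A\Z) \stackrel{\delta_1}{\longrightarrow} K_0(\mathcal{A}_{\theta}) \longrightarrow 0
\]
recorded above, for which the preceding results already supply every ingredient. The only abstract fact needed is the elementary observation that for an exact sequence $0 \to K \stackrel{\iota}{\to} G \stackrel{\pi}{\to} H \to 0$, if $\{a_j\}$ generates $K$ and $\{b_k\}$ generates $H$ with chosen preimages $\tilde b_k \in \pi^{-1}(b_k)$, then $\{\iota(a_j)\}\cup\{\tilde b_k\}$ generates $G$: indeed, for $g\in G$ write $\pi(g)=\sum_k n_k b_k$, note $g-\sum_k n_k \tilde b_k\in\ker\pi=\mathrm{im}(\iota)$, and expand the latter in the $\iota(a_j)$. (Splitness of the sequence is not even needed for this.)

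Next I would instantiate this with $G=K_1(\mathcal{A}_{\theta}\rtimes_A\Z)$, $H=K_0(\mathcal{A}_\theta)$ and $K=\bigslant{K_1(\mathcal{A}_\theta)}{\mathrm{im}(\id-\alpha^{-1}_{*1})}$. For the $H$-part: $K_0(\mathcal{A}_\theta)$ is generated by $[1]_0$ and $[p_\theta]_0$ (Section 2.2), and Corollary \ref{Ge} provides explicit preimages under $\delta_1$, namely $\delta_1([w^*]_1)=[1]_0$ and $\delta_1([y_\theta]_1)=[p_\theta]_0$; so $[w^*]_1$ and $[y_\theta]_1$ play the role of the $\tilde b_k$. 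For the $K$-part: the discussion immediately preceding the statement shows that $\bigslant{K_1(\mathcal{A}_\theta)}{\mathrm{im}(\id-\alpha^{-1}_{*1})}$ is generated by the classes of $[U_1'']_1$ and $[U_2'']_1$ defined in (\ref{vector}) via the change-of-basis matrices $P,Q\in GL_2(\Z)$ realizing the Smith normal form $P(I_2-A^{-1})Q=\diag(h_1,h_2)$; so $i_*([U_1'']_1)$ and $i_*([U_2'']_1)$ furnish the $\iota(a_j)$. Combining the two halves yields that $[w^*]_1$, $[y_\theta]_1$, $[U_1'']_1$, $[U_2'']_1$ generate $K_1(\mathcal{A}_{\theta}\rtimes_A\Z)$, which is the assertion.

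There is no genuine obstacle here; the theorem is a bookkeeping corollary of Theorem \ref{thm:index}, Corollary \ref{Ge}, and the Smith-normal-form discussion that precedes it. The one point deserving a sentence of care is the passage from the ``obvious'' generators $[U_1']_1,[U_2']_1$ of $\bigslant{K_1(\mathcal{A}_\theta)}{\mathrm{im}(\diag(h_1,h_2)Q^{-1})}$ to $[U_1'']_1,[U_2'']_1$: this uses that multiplication by $P$ is a group isomorphism $\bigslant{K_1(\mathcal{A}_\theta)}{\mathrm{im}(P^{-1}\diag(h_1,h_2)Q^{-1})}\to\bigslant{K_1(\mathcal{A}_\theta)}{\mathrm{im}(\diag(h_1,h_2)Q^{-1})}$, which I would simply cite from the preamble rather than reprove. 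It is also worth noting that the case distinction $\tr(A)=2$ versus $\tr(A)\notin\{0,\pm1,2\}$ plays no role in this statement --- it affects only the order of $[U_2'']_1$ in $K_1(\mathcal{A}_{\theta}\rtimes_A\Z)$, not the fact that the four listed classes generate --- so, unlike in the proof of Theorem \ref{thm:K}, the two cases need not be treated separately.
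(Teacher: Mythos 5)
Your proposal is correct and follows exactly the route the paper takes: the paper offers no separate proof beyond the discussion preceding the statement, namely the split Pimsner--Voiculescu short exact sequence, the preimages $\delta_1([w^*]_1)=[1]_0$ and $\delta_1([y_\theta]_1)=[p_\theta]_0$ from Corollary \ref{Ge}, and the Smith-normal-form change of basis producing $[U_1'']_1,[U_2'']_1$ as generators of the quotient, which is precisely the bookkeeping you carry out.
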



\section{Morita equivalence classes of $\mathcal{A}_{\theta}\rtimes_A \Z$}

In this section we determine precisely when two crossed products of the form $A_\theta\rtimes_A\Z$ are Morita equivalent. The main tool is the following theorem obtained independently by Combes and Curto-Muhly-Williams. Roughly, the result says that if two $C^*$-algebras $\mathcal{A}$ and $\mathcal{B}$ are Morita equivalent via a bimodule $X$ and a group $G$ acts on both $\mathcal{A}$ and $\mathcal{B}$, then the crossed products $\mathcal{A}\rtimes G$ and $\mathcal{B}\rtimes G$ are Morita equivalent provided we can find a $G$-action on $X$ which is compatible with the actions on $\mathcal{A}$ and $\mathcal{B}$. Also see \cite{EKQR06} for a more categorical approach.

\begin{thm} \cite[p. 299]{Com84},\cite[Theorem 1]{CMW84}
	Let $\mathcal{A},\mathcal{B}$ be $C^*$-algebras, $G$ a locally compact group, and $\alpha:G\to \Aut(\mathcal{A})$ and $\beta:G\to \Aut(\mathcal{B})$ be continuous group actions. 
	Suppose there is a $\mathcal{B}$-$\mathcal{A}$-imprimitivity bimodule $X$ and a strongly continuous action of $G$ on $X$, $\{ \tau_g \}_{g\in G}$, such that for all $x,y\in X$ and $g\in G$ we have
	\begin{enumerate}
		\item $\ip{ \tau_g(x), \tau_g(y) }_\mathcal{A} = \alpha_g( \ip{x,y}_\mathcal{A} )$, and
		\item $_\mathcal{B} \ip{ \tau_g(x), \tau_g(y) } = \beta_g( _\mathcal{B}\ip{x,y} )$.
	\end{enumerate}
	Then the crossed products $\mathcal{A}\rtimes_{\alpha} G$ and $\mathcal{B}\rtimes_{\beta} G$ are Morita equivalent.
\end{thm}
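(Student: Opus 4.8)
The plan is to build, out of $X$ and the action $\tau$, an explicit $\mathcal{B}\rtimes_{\beta}G$-$\mathcal{A}\rtimes_{\alpha}G$-imprimitivity bimodule, and the most economical way to organize the bookkeeping is via the linking algebra. Since $X$ is a $\mathcal{B}$-$\mathcal{A}$-imprimitivity bimodule, one has the linking $C^*$-algebra
$$
L=\begin{pmatrix} \mathcal{B} & X \\ \widetilde{X} & \mathcal{A}\end{pmatrix},
$$
where $\widetilde{X}$ is the dual module; $L$ carries complementary projections $p,q\in M(L)$ with $p+q=1$ such that $pLp\cong\mathcal{B}$, $qLq\cong\mathcal{A}$ (both full corners) and $pLq\cong X$. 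Conversely, any pair of complementary full projections in a $C^*$-algebra realizes a Morita equivalence between the two corners, so it is enough to produce such a picture \emph{for the crossed products}.

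First I would assemble the three actions into a single action $\gamma\colon G\to\Aut(L)$: let $\gamma_g$ act as $\beta_g$ on the upper-left entry, as $\alpha_g$ on the lower-right entry, as $\tau_g$ on the $X$-slot, and as the dual of $\tau_g$ on the $\widetilde{X}$-slot. Here is exactly where hypotheses (1) and (2) are used: multiplying an $X$-entry by a $\widetilde{X}$-entry produces an $\mathcal{A}$- or $\mathcal{B}$-entry via the two inner products, and (1)–(2) are precisely the identities making $\gamma_g$ respect the multiplication and the involution of $L$; strong continuity of $g\mapsto\gamma_g$ follows from that of $\alpha,\beta,\tau$. The action extends to $M(L)$ and fixes $p$ and $q$. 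I would then form the full crossed product $L\rtimes_\gamma G$, inside whose multiplier algebra $p$ and $q$ remain complementary projections.

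The step I expect to require the most care is the identification
$$
p\,(L\rtimes_\gamma G)\,p\;\cong\;(pLp)\rtimes_{\gamma|}G\;=\;\mathcal{B}\rtimes_\beta G,
$$
and similarly $q(L\rtimes_\gamma G)q\cong\mathcal{A}\rtimes_\alpha G$ — that is, that forming the crossed product commutes with cutting down by a $G$-invariant full corner. On the dense level this is visible: $p\,C_c(G,L)\,p=C_c(G,pLp)$, with the convolution inherited from the restricted action, because $p\gamma_t(\cdot)p$ only sees the corner. To match the $C^*$-norms one uses that $p$ is \emph{full} in $L$ (true since $X$ is an imprimitivity bimodule), so that covariant representations of $(pLp,G)$ and of $(L,G)$ correspond; for full crossed products this makes the identification clean. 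One also checks that $p$ (and $q$) stay full in $L\rtimes_\gamma G$: the ideal they generate is automatically $G$-invariant and, since $\overline{LpL}=L$, it contains a dense subset of $C_c(G,L)$, hence is everything. Granting this, $p$ and $q$ are complementary full projections in $L\rtimes_\gamma G$, and its off-diagonal corner $p(L\rtimes_\gamma G)q$ is the sought $\mathcal{B}\rtimes_\beta G$-$\mathcal{A}\rtimes_\alpha G$-imprimitivity bimodule, establishing Morita equivalence. (For the applications in this paper $G$ is $\Z$ or finite, hence amenable, so full and reduced crossed products coincide and all integrals above are finite sums, which removes every continuity subtlety.)

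As an alternative that avoids the linking algebra, one can define directly on $C_c(G,X)$ a right $C_c(G,\mathcal{A})$-module structure, a left $C_c(G,\mathcal{B})$-module structure, and $C_c(G,\mathcal{A})$- and $C_c(G,\mathcal{B})$-valued inner products by the obvious convolution formulas twisted by $\alpha$, $\beta$, and $\tau$, and then verify the imprimitivity-bimodule axioms by hand. This replaces the structural identifications above by a string of Fubini and change-of-variable computations; once more, conditions (1)–(2) are exactly what is needed for the two inner-product compatibility identities to hold.
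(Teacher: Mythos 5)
This theorem is not proved in the paper at all: it is quoted as an external result of Combes and of Curto--Muhly--Williams, so there is no internal proof to compare against. Your proposal is, however, a correct proof sketch, and in fact your two routes correspond to the two cited sources: the linking-algebra argument (build $L$, assemble $\beta$, $\tau$, $\alpha$ into a single action $\gamma$ on $L$, pass to $L\rtimes_\gamma G$, and cut down by the complementary full $G$-invariant projections $p,q\in M(L)$) is essentially Combes' proof, while the direct construction of a bimodule structure on $C_c(G,X)$ with convolution-type inner products is essentially the Curto--Muhly--Williams proof. The only point I would flag is your claim that (1)--(2) are \emph{precisely} what makes $\gamma_g$ multiplicative on $L$: you also need equivariance of the module actions, $\tau_g(b\cdot x\cdot a)=\beta_g(b)\cdot\tau_g(x)\cdot\alpha_g(a)$, which is not literally one of the hypotheses but does follow from (1)--(2) by a short positivity computation (expand $\ip{\tau_g(x\cdot a)-\tau_g(x)\cdot\alpha_g(a),\,\tau_g(x\cdot a)-\tau_g(x)\cdot\alpha_g(a)}_{\mathcal{A}}$ and use $\mathcal{A}$-linearity of the inner product), so this is a gap of exposition rather than substance. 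Your treatment of the corner identification $p(L\rtimes_\gamma G)p\cong (pLp)\rtimes G$ via fullness and the correspondence of covariant representations, and of fullness of $p,q$ in the crossed product, is the right way to handle the delicate step; and your remark that for the paper's applications $G$ is $\Z$ or finite, hence amenable, correctly disposes of any full-versus-reduced issue.
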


A standard completion argument shows that in the theorem it is enough to have an action of $G$ on some pre-imprimitivity bimodule linking dense $*$-subalgebras of $\mathcal{A}$ and $\mathcal{B}$. Here is the precise statement.

\begin{prop} \label{prop:Combes}
	Let $\mathcal{A},\mathcal{B}$ be $C^*$-algebras, $G$ a locally compact group, and $\alpha:G\to \Aut(\mathcal{A})$ and $\beta:G\to \Aut(\mathcal{B})$ be continuous group actions. Suppose there exist dense $*$-subalgebras $\mathcal{A}_0\subseteq \mathcal{A}$ and $\mathcal{B}_0\subseteq \mathcal{B}$, a $\mathcal{B}_0$-$\mathcal{A}_0$-pre-imprimitivity bimodule $X_0$, and a strongly continuous action of $G$ on $X$, $\{\tau_g\}_{g\in G}$ such that for all $x,y\in X_0$ and $g\in G$ we have
	\begin{enumerate}
		\item $\ip{ \tau_g(x), \tau_g(y) }_\mathcal{A} = \alpha_g( \ip{x,y}_\mathcal{A} )$, and
		\item $_\mathcal{B} \ip{ \tau_g(x), \tau_g(y) } = \beta_g( _\mathcal{B}\ip{x,y} )$.
	\end{enumerate}
	Then the crossed products $\mathcal{A}\rtimes_{\alpha} G$ and $\mathcal{B}\rtimes_{\beta} G$ are Morita equivalent.
\end{prop}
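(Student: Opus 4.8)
The plan is to reduce the statement of Proposition~\ref{prop:Combes} to the theorem of Combes and Curto--Muhly--Williams by showing that the hypothesized action $\{\tau_g\}_{g\in G}$ on the pre-imprimitivity bimodule $X_0$ extends uniquely to a strongly continuous action on the completed imprimitivity bimodule $X$, and that the covariance identities (1) and (2), which are assumed only on $X_0$, automatically persist on all of $X$ by a density argument. First I would recall how $X$ is built from $X_0$: one equips $X_0$ with the norm $\|x\|_X := \|\ip{x,x}_{\mathcal{A}}\|^{1/2}$ (equivalently $\|{}_{\mathcal{B}}\ip{x,x}\|^{1/2}$, the two agreeing on an imprimitivity bimodule), and $X$ is the Hilbert $\mathcal{A}$-module completion; the $\mathcal{B}_0$-valued inner product and the left $\mathcal{B}_0$-action extend by continuity so that $X$ becomes a $\mathcal{B}$-$\mathcal{A}$-imprimitivity bimodule. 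This is standard (see Raeburn--Williams), so I would cite it rather than reprove it.

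Next I would check that each $\tau_g$ is isometric on $X_0$ for the norm $\|\cdot\|_X$. Indeed, by hypothesis~(1) and the fact that $\alpha_g$ is a $*$-automorphism of $\mathcal{A}$ (hence isometric), $\|\tau_g(x)\|_X^2 = \|\ip{\tau_g(x),\tau_g(x)}_{\mathcal{A}}\| = \|\alpha_g(\ip{x,x}_{\mathcal{A}})\| = \|\ip{x,x}_{\mathcal{A}}\| = \|x\|_X^2$. Since $\tau_{g^{-1}}$ is a two-sided inverse, $\tau_g$ is a surjective linear isometry of $X_0$, and therefore extends uniquely to a linear isometry $\bar\tau_g$ of $X$; the cocycle identity $\bar\tau_g\bar\tau_h = \bar\tau_{gh}$ and $\bar\tau_e = \mathrm{id}$ pass to the completion by continuity and density. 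The covariance identities extend likewise: for fixed $x,y\in X_0$ the maps $x \mapsto \ip{\tau_g(x),\tau_g(y)}_{\mathcal{A}}$ and $x\mapsto \alpha_g(\ip{x,y}_{\mathcal{A}})$ are each continuous from $X_0$ (with $\|\cdot\|_X$) into $\mathcal{A}$ — using that inner products are norm-continuous in each variable and $\alpha_g$ is isometric — and they agree on the dense subspace $X_0$, hence on all of $X$; a second density step in the $y$-variable completes identity~(1), and identity~(2) is handled symmetrically using the $\mathcal{B}$-valued inner product (again the two norms on $X$ coincide, so the same completion works).

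The last point is strong continuity of $g\mapsto \bar\tau_g$ on $X$. Here I would use a standard $3\varepsilon$-argument: given $\xi\in X$ and $\varepsilon>0$, pick $x\in X_0$ with $\|\xi-x\|_X<\varepsilon$; then for $g$ near $g_0$, $\|\bar\tau_g\xi - \bar\tau_{g_0}\xi\|_X \le \|\bar\tau_g(\xi-x)\|_X + \|\tau_g(x)-\tau_{g_0}(x)\|_X + \|\bar\tau_{g_0}(x-\xi)\|_X < 2\varepsilon + \|\tau_g(x)-\tau_{g_0}(x)\|_X$, and the middle term is small by the assumed strong continuity of $\{\tau_g\}$ on $X_0$. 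With $X$, $\{\bar\tau_g\}$, and the two covariance identities in hand, the hypotheses of the Combes / Curto--Muhly--Williams theorem are met verbatim, so $\mathcal{A}\rtimes_\alpha G$ and $\mathcal{B}\rtimes_\beta G$ are Morita equivalent, as claimed.

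I do not expect a genuine obstacle here; the only mild subtlety is making sure that the norm used to complete $X_0$ — and hence the ambient space on which $\bar\tau_g$ lives — is unambiguous, which is exactly why it matters that on an imprimitivity bimodule the $\mathcal{A}$-norm and the $\mathcal{B}$-norm agree. Given that, both the isometry extension and the two covariance identities extend through the \emph{same} completion, and the argument is purely routine functional analysis.
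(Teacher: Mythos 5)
Your proposal is correct and follows exactly the route the paper intends: the paper offers no written proof, only the remark that ``a standard completion argument'' reduces the proposition to the Combes / Curto--Muhly--Williams theorem, and your argument (isometric extension of each $\tau_g$ via condition (1), extension of the covariance identities by density and Cauchy--Schwarz, strong continuity by a $3\varepsilon$ estimate) is precisely that standard argument spelled out. No gaps; the observation that the $\mathcal{A}$- and $\mathcal{B}$-norms coincide on an imprimitivity bimodule is indeed the one point worth making explicit.
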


Let $\theta\in \R\setminus \Q$ and let $\alpha:SL_2(\Z)\curvearrowright \mathcal{A}_{\theta}$ and $\beta:SL_2(\Z)\curvearrowright \mathcal{A}_{\frac{1}{\theta}}$ be the actions defined in Section 2. We wish to apply this result to the case that $\mathcal{A} = \mathcal{A}_{\theta}$, $\mathcal{A}_0 = \mathcal{S}(\Z^2, \omega_\theta)$, $\mathcal{B}= \mathcal{A}_{ \frac{1}{\theta} }$, $\mathcal{B}_0 = \mathcal{S}(\Z^2, \omega_{ \frac{1}{\theta }})$, and $X_0 = \mathcal{S}(\R)$. We write
$$
J = \begin{pmatrix}
0 & 1 \\
-1 & 0
\end{pmatrix} \;\;\;\;\;\;\text{and}\;\;\;\;\;\; P = \begin{pmatrix}
1 & 0 \\
1 & 1
\end{pmatrix}.
$$
It is well-known that $J$ and $P$ generate the group $SL_2(\Z)$. Our first goal is to show that there exist unitary operators $S_J$ and $S_P$ on $L^2(\R)$ such that for all $f,g\in \mathcal{S}(\R)$ we have
$$
\ip{ S_J(f), S_J(g) }_{\mathcal{A}^\infty} = \alpha_J ( \ip{f,g}_{\mathcal{A}^\infty}  ),\;\;\;\;\;\; _{\mathcal{B}^\infty}\ip{ S_J(f), S_J(g) } = \beta_{J^{-1}}( _{\mathcal{B}^\infty}\ip{f,g} )
$$
and 
$$
\ip{ S_P(f), S_P(g) }_{\mathcal{A}^\infty} = \alpha_P ( \ip{f,g}_{\mathcal{A}^\infty}  ),\;\;\;\;\;\; _{\mathcal{B}^\infty}\ip{ S_P(f), S_P(g) } = \beta_{P^{-1}}( _{\mathcal{B}^\infty}\ip{f,g} )
$$

\begin{defn} \label{defn:S_J}
	Define $S_J:\mathcal{S}(\R)\to \mathcal{S}(\R)$ by
	$$
	S_J(f)(x) := \theta^{-1/2}\int_{\R} e(xy/\theta)f(y)\;dy
	$$
	and $S_P:\mathcal{S}(\R)\to \mathcal{S}(\R)$ by
	$$
	S_P(f)(x) := e(-x^2/(2\theta))f(x).
	$$
\end{defn}

The operator $S_J$ is precisely the operator $W$ introduced in \cite[p. 647]{Wal00}, where it is shown that $S_J$ has period four and its square is equal to the flip operator, i.e., the operator sending $f(x)$ to $f(-x)$ (see also \cite[Proposition 4.6]{EGLN15}). Indeed, we have
$$
S_J(f)(x) = \theta^{-1/2}\hat{f}(-x/\theta),
$$
where $\hat{f}$ is the Fourier transform of $f$. Since the Fourier transform extends to a unitary operator on $L^2(\R)$, so does the operator $S_J$.

The operator $S_P$ is a so-called \emph{metaplectic operator} associated to the matrix $P$ (see \cite[Chapter 7]{dGos11}). It is clear from the definition that $S_P$ extends to a unitary operator on $L^2(\R)$. 

We use the same notations $S_J$ and $S_P$ for the extensions to $L^2(\R)$.  In what follows, we keep the notations in Section 2.4., and we will use Proposition \ref{prop:action} and Proposition \ref{prop:module_action} freely.

The next two lemmas show that the operators $S_J$ and $S_P$ are compatible with the automorphisms $\alpha_J$ and $\alpha_P$, respectively.

\begin{lem} \label{lem:action-comp}
	For all $f\in \mathcal{S}(\R)$ and $\ell\in \Z^2$, we have 
	$$
	S_J(f.U_\ell) = S_J(f). \alpha_J(U_\ell),\;\;\;\;\;\; S_J(V_\ell.f) = \beta_{J^{-1}}(V_\ell). S_J(f).
	$$
\end{lem}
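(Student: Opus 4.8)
The plan is to verify both identities by a direct computation: substitute the explicit formulas for the module actions from Proposition \ref{prop:module_action} into the definition of $S_J$ and perform a change of variables. The statement is purely an identity of functions in $\mathcal{S}(\R)$, so it suffices to evaluate both sides at an arbitrary point $x\in\R$.

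First I would unwind the right-hand sides. Writing $\ell = \binom{m}{n}$, we have $J\binom{m}{n} = \binom{n}{-m}$ and $J^{-1}\binom{m}{n} = \binom{-n}{m}$, so Proposition \ref{prop:action} gives $\alpha_J(U_\ell) = U_{\binom{n}{-m}}$ and $\beta_{J^{-1}}(V_\ell) = V_{\binom{-n}{m}}$. Feeding these indices into Proposition \ref{prop:module_action} yields, for any $g\in\mathcal{S}(\R)$,
\[
\bigl(g.\alpha_J(U_\ell)\bigr)(x) = e(-mn\theta/2)\,e(-mx)\,g(x+n\theta), \qquad \bigl(\beta_{J^{-1}}(V_\ell).g\bigr)(x) = e(mn/(2\theta))\,e(-mx/\theta)\,g(x-n).
\]

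For the first identity I would substitute $(f.U_\ell)(y) = e(mn\theta/2)e(ny)f(y+m\theta)$ into $S_J(f.U_\ell)(x) = \theta^{-1/2}\int_\R e(xy/\theta)(f.U_\ell)(y)\,dy$, change variables $y \mapsto y - m\theta$, and collect the exponential factors. The key algebraic observation is $e(xy/\theta)\,e(ny) = e\bigl((x+n\theta)y/\theta\bigr)$, so up to a scalar the integral becomes $S_J(f)(x+n\theta)$; tracking the scalar (here $e(mn\theta/2)e(-mn\theta) = e(-mn\theta/2)$) gives precisely $e(-mn\theta/2)e(-mx)S_J(f)(x+n\theta)$, which matches $\bigl(S_J(f).\alpha_J(U_\ell)\bigr)(x)$ computed above. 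The second identity is entirely parallel: substitute $(V_\ell.f)(y) = e(-mn/(2\theta))e(-ny/\theta)f(y+m)$, change variables $y\mapsto y-m$, and use $e(xy/\theta)\,e(-ny/\theta) = e\bigl((x-n)y/\theta\bigr)$ to recognize $S_J(f)(x-n)$ up to a scalar, which again reconciles with the right-hand side after the analogous bookkeeping.

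I do not expect a genuine obstacle: the content is a change-of-variables calculation, and the only point requiring care is keeping the phase factors $e(mn\theta/2)$ and $e(mn/(2\theta))$ together with those produced by the translation in the argument of $f$. Conceptually the lemma merely records that $S_J$ is a rescaled Fourier transform, while the two $\Z^2$-actions on $\mathcal{S}(\R)$ are built from translations and modulations which the Fourier transform interchanges, and $J$ is exactly the matrix implementing this interchange on $\Z^2$; but since the precise normalizations (powers of $\theta$ and the half-integer phases) matter for the later argument, I would carry out the explicit computation rather than appeal to this heuristic.
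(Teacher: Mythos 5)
Your proposal is correct and proceeds by essentially the same route as the paper: a direct change-of-variables computation against the definition of $S_J$ as a rescaled Fourier transform, with the phase bookkeeping you describe working out exactly as claimed. The only (harmless) difference is that you verify the identities for arbitrary $U_\ell$ and $V_\ell$ using the general formulas of Proposition \ref{prop:module_action}, whereas the paper checks only the generators $U_1,U_2,V_1,V_2$ and implicitly invokes multiplicativity of the module actions and of $\alpha_J$, $\beta_{J^{-1}}$.
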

\begin{proof}
	It suffices to verify the equalities on generators $U_1, U_2$ and $V_1, V_2$. The result follows from a series of direct computations.
	\begin{align*}
		S_J(f.U_1)(x) &= \theta^{-1/2}\int_\R e(xy/\theta) (f.U_1)(y) \;dy = \theta^{-1/2}\int_\R e(xy/\theta)f(y+\theta)\;dy \\
		&= \theta^{-1/2}\int_{\R} e(x(y-\theta)/\theta)f(y)\;dy = \theta^{-1/2}e(-x)\int_\R e(xy/\theta) f(y) \;dy \\
		&= e(-x)S_J(f)(x) = [S_J(f).U_2^{-1}](x) \\
		&= [S_J(f).\alpha_J(U_1)](x).
	\end{align*}
	\begin{align*}
		S_J(f.U_2)(x) &= \theta^{-1/2}\int_{\R} e(xy/\theta)(f.U_2)(y)\;dy = \theta^{-1/2}\int_\R e(xy/\theta)e(y)f(y)\;dy \\
		&= \theta^{-1/2}\int_\R e( (x+\theta)y/\theta )f(y)\;dy = S_J(f)(x+\theta) \\
		&= [S_J(f).U_1](x) \\
		&= [S_J(f).\alpha_J(U_2)](x).		
	\end{align*}
	\begin{align*}
		S_J(V_1.f)(x) &= \theta^{-1/2}\int_\R e(xy/\theta)(V_1.f)(y)\;dy = \theta^{-1/2}\int_{\R} e(xy/\theta) f(y+1)\;dy \\
		&= \theta^{-1/2}\int_{\R}e(x(y-1)/\theta)f(y)\;dy = \theta^{-1/2}e(-x/\theta) \int_{\R} e(xy/\theta)f(y)\;dy \\
		&= e(-x/\theta) S_J(f)(x) = [V_2.S_J(f)](x) \\
		&= [\beta_{J^{-1}}(V_1).S_J(f)](x).
	\end{align*}
	\begin{align*}
		S_J(V_2.f)(x) &= \theta^{-1/2}\int_\R e(xy/\theta)(V_2.f)(y)\;dy = \theta^{-1/2}\int_\R e(xy/\theta)e(-y/\theta)f(y)\;dy \\
		&= \theta^{-1/2}\int_\R e((x-1)y/\theta)f(y)\;dy = S_J(f)(x-1) \\
		&= [V_1^{-1}.S_J(f)](x)  \\
		&= [ \beta_{J^{-1}}(V_2). S_J(f) ](x).
	\end{align*}
\end{proof}

\begin{lem} 
	For all $f\in \mathcal{S}(\R)$ and $\ell\in \Z^2$, we have
	$$
	S_P(f.U_\ell) = S_P(f). \alpha_P(U_\ell),\;\;\;\;\;\; S_P(V_\ell.f) = \beta_{P^{-1}}(V_\ell). S_P(f).
	$$
\end{lem}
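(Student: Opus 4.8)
The plan is to mimic the proof of Lemma~\ref{lem:action-comp} exactly: reduce to checking the two stated identities on the generators $U_1,U_2$ (for the first) and $V_1,V_2$ (for the second), and then verify each by a short direct computation. Since $S_P$ is just multiplication by $e(-x^2/(2\theta))$, the computations should be even simpler than those for $S_J$; all that happens is that the quadratic phase picks up a linear shift whenever a translation operator is applied, and this linear shift is exactly what distinguishes $\alpha_P(U_\ell)$ from $U_\ell$.

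First I would recall that $P=\begin{pmatrix}1&0\\1&1\end{pmatrix}$, so $\alpha_P(U_1)=e(\theta/2)U_1U_2$ (up to the appropriate cocycle scalar, i.e.\ $\alpha_P(U_1)=U_{\binom{1}{1}}$ in the notation of Proposition~\ref{prop:action}, using $\alpha_P(U_\ell)=U_{P.\ell}$) and $\alpha_P(U_2)=U_2$; similarly $P^{-1}=\begin{pmatrix}1&0\\-1&1\end{pmatrix}$ gives $\beta_{P^{-1}}(V_1)=V_{\binom{1}{-1}}$ and $\beta_{P^{-1}}(V_2)=V_2$. Then I would compute, for $f\in\mathcal{S}(\R)$:
\begin{align*}
S_P(f.U_1)(x) &= e(-x^2/(2\theta))f(x+\theta) \\
&= e(-(x+\theta)^2/(2\theta))\,e(x)\,e(\theta/2)\,f(x+\theta) \\
&= e(\theta/2)\,[S_P(f).U_1U_2](x) = [S_P(f).\alpha_P(U_1)](x),
\end{align*}
using $(x+\theta)^2 = x^2 + 2\theta x + \theta^2$ so that $-\,(x+\theta)^2/(2\theta) = -x^2/(2\theta) - x - \theta/2$, and $[g.U_1U_2](x) = e(x)g(x+\theta)$ from Proposition~\ref{prop:module_action}. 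The identity $S_P(f.U_2) = S_P(f).U_2 = S_P(f).\alpha_P(U_2)$ is immediate since both $S_P$ and $.U_2$ are multiplication operators. The two identities for $V_1,V_2$ on the left are entirely analogous: $S_P(V_2.f) = V_2.S_P(f)$ is trivial, and for $V_1$ one computes $S_P(V_1.f)(x) = e(-x^2/(2\theta))f(x+1) = e(-(x+1)^2/(2\theta))\,e(-(2x+1)/(2\theta))^{-1}f(x+1)$; matching $-\,(x+1)^2/(2\theta) = -x^2/(2\theta) - x/\theta - 1/(2\theta)$ against the formula $[V_{\binom{1}{-1}}.g](x) = e(1/(2\theta))\,e(x/\theta)\,g(x+1)$ from Proposition~\ref{prop:module_action} shows $S_P(V_1.f) = \beta_{P^{-1}}(V_1).S_P(f)$, with the scalars $e(\pm 1/(2\theta))$ accounting for the cocycle discrepancy between $V_1V_2^{-1}$ and $V_{\binom{1}{-1}}$.

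There is essentially no obstacle here; the only thing requiring care is bookkeeping of the cocycle scalars, i.e.\ making sure the phase factors $e(mn\theta/2)$ and $e(-mn/(2\theta))$ from Proposition~\ref{prop:module_action} are tracked consistently so that the result is stated with the correct normalization of $\alpha_P(U_\ell)$ and $\beta_{P^{-1}}(V_\ell)$. Since the proof of the preceding lemma is spelled out in full, I would likely present this one tersely, checking the four generator identities and leaving the remaining algebra to the reader, exactly as the phrase ``completely analogous'' would suggest. Once both Lemma~\ref{lem:action-comp} and this lemma are in hand, combined with Proposition~\ref{prop:L2-trick} (which converts the module inner products into $L^2$ inner products, where unitarity of $S_J$ and $S_P$ can be applied), one gets the compatibility of $S_J$ and $S_P$ with the inner products, and hence via Proposition~\ref{prop:Combes} the desired Morita equivalences of crossed products.
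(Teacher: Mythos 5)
Your proposal is correct and matches the paper's own proof: both reduce to the four generator identities for $U_1,U_2,V_1,V_2$ and verify them by the same direct computations with the quadratic phase $e(-x^2/(2\theta))$, using $\alpha_P(U_1)=U_{\binom{1}{1}}$, $\alpha_P(U_2)=U_2$, $\beta_{P^{-1}}(V_1)=V_{\binom{1}{-1}}$, $\beta_{P^{-1}}(V_2)=V_2$ and the module-action formulas of Proposition \ref{prop:module_action}. The only difference is cosmetic (you expand the phase starting from $S_P(f.U_1)$ while the paper starts from $[S_P(f).\alpha_P(U_1)]$), and your bookkeeping of the cocycle scalars is consistent with the paper's.
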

\begin{proof}
	Again we check the claims for generators $U_1,U_2$ and $V_1,V_2$.
	\begin{align*}
		[S_P(f).\alpha_P(U_1)](x) &= \left[ S_P(f).U_{ \tiny \begin{pmatrix} 
				1 \\ 1
			\end{pmatrix} }  \right](x) = e(\theta/2)e(x)S_P(f)(x+\theta) \\
			&= e(\theta/2)e(x)e(-(x+\theta)^2/(2\theta))f(x+\theta) \\
			&= e(-x^2/(2\theta))f(x+\theta) \\
			&= S_P(f.U_1)(x).
		\end{align*}
		\begin{align*}
			[S_P(f).\alpha_P(U_2)](x) &= [S_P(f).U_2](x) = e(x)S_P(f)(x) \\
			&= e(x)  e(-x^2/(2\theta))f(x) \\
			&= e(-x^2/(2\theta))(f.U_2)(x) \\
			&= S_P(f.U_2)(x).
	\end{align*}
	\begin{align*}
			[\beta_{P^{-1}}(V_1).S_P(f)](x) &= \left[ V_{ \tiny   \begin{pmatrix}
					1 \\ -1
				\end{pmatrix} }. S_P(f) \right](x) = e(1/(2\theta))e(x/\theta)S_P(f)(x+1) \\
				&= e(1/(2\theta))e(x/\theta) e(-(x+1)^2/(2\theta)) f(x+1) \\
				&= e(-x^2/(2\theta))f(x+1) \\
				&= S_P(V_1.f)(x).
	\end{align*}
	\begin{align*}
				\left[  \beta_{p^{-1}}(V_2).S_P(f)  \right](x) &= [V_2.S_P(f)](x) = e(-x/\theta) S_P(f)(x) \\
				&= e(-x/\theta)e(-x^2/(2\theta))f(x) \\
				&= e(-x^2/(2\theta))(V_2.f)(x) \\
				&= S_P(V_2.f)(x).
	\end{align*}
\end{proof}

Next we show the compatibility among inner products, which is what we need in order to apply Proposition \ref{prop:Combes}.

\begin{prop} \label{prop:ip-comp}
	For all $f,g\in \mathcal{S}(\R)$ we have 
	$$
	\ip{ S_J(f), S_J(g) }_{\mathcal{A}^\infty} = \alpha_J ( \ip{f,g}_{\mathcal{A}^\infty}  ),\;\;\;\;\;\; _{\mathcal{B}^\infty}\ip{ S_J(f), S_J(g) } = \beta_{J^{-1}}( _{\mathcal{B}^\infty}\ip{f,g} )
	$$
	and 
	$$
	\ip{ S_P(f), S_P(g) }_{\mathcal{A}^\infty} = \alpha_P ( \ip{f,g}_{\mathcal{A}^\infty}  ),\;\;\;\;\;\; _{\mathcal{B}^\infty}\ip{ S_P(f), S_P(g) } = \beta_{P^{-1}}( _{\mathcal{B}^\infty}\ip{f,g} )
	$$	
\end{prop}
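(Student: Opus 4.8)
The plan is to reduce everything to the previous two lemmas by means of the ``$L^2$-trick'' recorded in Proposition \ref{prop:L2-trick}, which expresses both the $\mathcal{A}^\infty$-valued and $\mathcal{B}^\infty$-valued inner products on $\mathcal{S}(\R)$ in terms of the ordinary $L^2(\R)$-inner product together with the right/left module actions. The point is that $S_J$ and $S_P$ are \emph{unitary} on $L^2(\R)$ (this was noted right after Definition \ref{defn:S_J}), so they preserve $\ip{\cdot,\cdot}_{L^2(\R)}$, and Lemma \ref{lem:action-comp} and the lemma following it tell us exactly how $S_J$, $S_P$ interact with the module actions by $U_\ell$ and $V_\ell$. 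Combining these two facts should turn the $L^2$-formula for $\ip{S_J(f),S_J(g)}_{\mathcal{A}^\infty}$ directly into the $L^2$-formula for $\alpha_J(\ip{f,g}_{\mathcal{A}^\infty})$, and similarly in the other three cases.

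Concretely, I would argue as follows for the first identity. Fix $\ell=\begin{pmatrix} m\\ n\end{pmatrix}\in\Z^2$. By Proposition \ref{prop:L2-trick},
$$
\ip{S_J(f),S_J(g)}_{\mathcal{A}^\infty}(\ell) = \theta\, e(mn\theta/2)\,\ip{S_J(g).U_{-\ell},\, S_J(f)}_{L^2(\R)}.
$$
By Lemma \ref{lem:action-comp}, $S_J(g).\alpha_J(U_{-\ell}) = S_J(g.U_{-\ell})$; since $\alpha_J$ is an automorphism, $S_J(g).U_{-\ell}' = S_J(g.U_{-\ell}')$ where $U_{-\ell}' := \alpha_J^{-1}(U_{-\ell})$, and as $U_{-\ell}$ ranges over all of the $U$'s so does $U_{-\ell}'$; more usefully, applying the lemma to the element $g.U_{-\ell}'$ itself rewrites $S_J(g).U_{-\ell}$ as $S_J(g.U_{-\ell}'')$ for the appropriate index, after which unitarity of $S_J$ on $L^2(\R)$ gives $\ip{S_J(g).U_{-\ell},S_J(f)}_{L^2} = \ip{g.U_{-\ell}'',f}_{L^2}$. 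Translating this back through Proposition \ref{prop:L2-trick} and matching it against the definition of $\alpha_J$ on $C^*_r(\Z^2,\omega_\theta)$ (Proposition \ref{prop:action}: $\alpha_J(U_\ell)=U_{J.\ell}$) yields $\alpha_J(\ip{f,g}_{\mathcal{A}^\infty})(\ell)$. The cleanest bookkeeping is probably: write $\ip{f,g}_{\mathcal{A}^\infty} = \sum_\ell c_\ell U_\ell$ formally, observe that $\ip{S_J(f),S_J(g)}_{\mathcal{A}^\infty}$ is characterized by $\ip{h.\ip{S_J f, S_J g}_{\mathcal{A}^\infty}, k}_{L^2} = \ldots$, and use the intertwining relation to slide $S_J$ across. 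The same scheme handles $_{\mathcal{B}^\infty}\ip{S_J(f),S_J(g)}$ using the second formula in Proposition \ref{prop:L2-trick} and the second half of Lemma \ref{lem:action-comp}, and the $S_P$ statements using the analogous lemma; note the appearance of $J^{-1}$ (resp.\ $P^{-1}$) on the $\mathcal{B}$-side is forced precisely because in Lemma \ref{lem:action-comp} it is $\beta_{J^{-1}}$ that intertwines the left action, which is a left module, so ``transpose/inverse'' enters.

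Alternatively — and this may be the shortest route to write down — one can bypass the formal expansion entirely: it suffices to check the identity of $\mathcal{A}^\infty$-valued inner products by testing against the right action on $\mathcal{S}(\R)$, since the action of $\mathcal{A}^\infty$ on $\mathcal{S}(\R)$ is faithful (nondegenerate). That is, for all $h\in\mathcal{S}(\R)$ one has $h.\ip{f,g}_{\mathcal{A}^\infty} = {}_{\mathcal{B}^\infty}\ip{h,f}\,.g$ (the imprimitivity-bimodule compatibility, which holds by the theorem of Rieffel--Walters quoted above), so the two sides of the desired equation are equal iff $h.\alpha_J(\ip{f,g}_{\mathcal{A}^\infty}) = h.\ip{S_Jf,S_Jg}_{\mathcal{A}^\infty}$ for all $h$, and then one uses $h.\alpha_J(a) = S_J\big((S_J^{-1}h).a\big)$ (which is Lemma \ref{lem:action-comp} rearranged) together with the associativity identity again. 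This reduces the whole proposition to the two lemmas plus unitarity of $S_J$, $S_P$ and costs essentially no computation.

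\textbf{Main obstacle.} The real content is already in Lemma \ref{lem:action-comp} and its companion; the remaining difficulty is purely organizational — keeping the scalars $e(mn\theta/2)$, $\theta^{-1/2}$, and the passage $\ell \mapsto -\ell$, $J\mapsto J^{-1}$ straight, and making sure the inner products are tested on the correct side (right module for $\mathcal{A}^\infty$, left module for $\mathcal{B}^\infty$), so that one does not accidentally prove the identity with $\alpha_J$ in place of $\alpha_{J^{-1}}$ or vice versa. I do not anticipate any genuine analytic subtlety, since everything takes place on the Schwartz space where all integrals converge absolutely and $S_J$, $S_P$ map $\mathcal{S}(\R)$ into itself.
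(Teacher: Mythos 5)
Your primary route is exactly the paper's proof: after replacing $f$ by $S_J^{-1}(f)$, the paper evaluates $\ip{f,S_J(g)}_{\mathcal{A}^\infty}(\ell)$ via Proposition \ref{prop:L2-trick}, slides $S_J$ across the module action using Lemma \ref{lem:action-comp} together with $\alpha_{J^{-1}}(U_{-\ell})=U_{-J^{-1}\ell}$, and then uses unitarity of $S_J$ on $L^2(\R)$ to arrive at the $L^2$-expression for $\alpha_J\bigl(\ip{S_J^{-1}(f),g}_{\mathcal{A}^\infty}\bigr)(\ell)$, with the $\mathcal{B}^\infty$-side and the $P$-case handled in the same way. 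So your proposal is correct and essentially identical to the paper's argument; the alternative bimodule-associativity route you sketch is not needed.
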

\begin{proof}
	We only check the identities for $J$, as for $P$ it is completely analogous. Replacing $f$ by $S_J^{-1}(f)$, it suffices to show that
	$$
	\ip{ f, S_J(g) }_{\mathcal{A}^\infty} = \alpha_J ( \ip{ S_J^{-1}(f),g}_{\mathcal{A}^\infty}  ),\;\;\;\;\;\; _{\mathcal{B}^\infty}\ip{ f, S_J(g) } = \beta_{J^{-1}}( _{\mathcal{B}^\infty}\ip{ S_J^{-1}(f),g} ).
	$$
	For convenience let us write $\lambda_A := \theta e(mn\theta/2)$ and $\lambda_B := e(mn/(2\theta))$. Using Proposition \ref{prop:L2-trick} and Lemma \ref{lem:action-comp}, for each $\ell =  \begin{pmatrix}
	m \\ n
	\end{pmatrix} \in \Z^2$ we have 
	\begin{align*}
		\ip{ f, S_J(g) }_{\mathcal{A}^\infty}(\ell) &= \lambda_A \ip{ S_J(g).U_{-\ell}, f}_{L^2(\R)} \\
		&= \lambda_A \ip{ S_J(g). \alpha_J (\alpha_{J^{-1}}(U_{-\ell})  ), f }_{L^2(\R)} \\
		&= \lambda_A\ip{  S_J( g.\alpha_{J^{-1}}(U_{-\ell}) ), f }_{L^2(\R)} \\
		&= \lambda_A\ip{ S_J(g. U_{-J^{-1}\ell}) , f }_{L^2(\R)} \\
		&= \lambda_A\ip{ g.U_{-J^{-1}\ell}, S_J^{-1}(f) }_{L^2(\R)} \\
		&= \ip{S_J^{-1}(f), g}_{\mathcal{A}^\infty}(J^{-1}\ell) \\
		&= \alpha_J( \ip{S_J^{-1}(f), g  }  )(\ell).
	\end{align*}
	Similarly, we have
	\begin{align*}
		_{\mathcal{B}^\infty} \ip{ f, S_J(g) }(\ell) &= \lambda_B \ip{f, V_\ell.S_J(g) }_{L^2(\R)} \\
		&= \lambda_B \ip{f, \beta_{J^{-1}}(\beta_J(V_\ell)). S_J(g) }_{L^2(\R)} \\
		&= \lambda_B\ip{ f, S_J(\beta_J(V_\ell).g) }_{L^2(\R)} \\
		&= \lambda_B \ip{  f, S_J(V_{J\ell}.g) }_{L^2(\R)} \\
		&= \lambda_B \ip{ S_J^{-1}(f), V_{J\ell}.g	}_{L^2(\R)} \\
		&= {}_{\mathcal{B}^\infty} \ip{ S_J^{-1}(f), g}(J\ell) \\
		&= \beta_{J^{-1}}\left(  _{\mathcal{B}^\infty}\ip{  S_J^{-1}(f), g}   \right)	(\ell)
	\end{align*}
	This completes the proof.
\end{proof}

We wish to use the fact that $J$ and $P$ generate the group $SL_2(\Z)$ to construct $\Z$-actions on $\mathcal{S}(\R)$. 
We define $S_{J^{-1}}$ and $S_{P^{-1}}$ to be the inverses of $S_J$ and $S_P$ in $\mathcal{U}(L^2(\R))$, respectively.

\begin{defn}
	Let $A \in SL_2(\Z)$. Then $A$ can be written as $A = W_1W_2\cdots W_n$, where each $W_k$ belongs to $\{J,P,J^{-1}, P^{-1}\}$. Define the operator $S_A:\mathcal{S}(\R)\to \mathcal{S}(\R)$ by
	$$
	S_A := S_{W_1}\circ S_{W_2}\circ\cdots \circ S_{W_n}.
	$$
\end{defn}

\begin{rem}
	Strictly speaking, the operator $S_A$ depends on how we write $A$ as a product of generators. However this would not matter for our consideration, hence we omit this dependence in the notation. In fact, the operator $S_A$ is uniquely determined up to sign. See \cite[Chapter 7]{dGos11} for a comprehensive discussion.
\end{rem}

Now we are ready to prove the main theorem of this section. Let us write $T := \begin{pmatrix}
-1 & 0 \\
0 & 1
\end{pmatrix}$. Note that $T^{-1} = T$ (hence $T^2 = I$).

\begin{thm} \label{thm:morita_integer_reciprocal}
	Let $\theta \in \R\setminus \Q$ and $A\in SL_2(\Z)$. Then the crossed product $\mathcal{A}_{\theta} \rtimes_A \Z$ is Morita equivalent to the crossed product $\mathcal{A}_{ \frac{1}{\theta}  } \rtimes_B \Z$, where $B := TAT$.
\end{thm}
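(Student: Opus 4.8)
The plan is to apply Proposition~\ref{prop:Combes} with $G=\Z$, $\mathcal{A}=\mathcal{A}_\theta$, $\mathcal{A}_0=\mathcal{A}^\infty=\mathcal{S}(\Z^2,\omega_\theta)$, $\mathcal{B}=\mathcal{A}_{\frac{1}{\theta}}$, $\mathcal{B}_0=\mathcal{B}^\infty=\mathcal{S}(\Z^2,\omega_{\frac{1}{\theta}})$, $X_0=\mathcal{S}(\R)$, the $\Z$-action on $\mathcal{A}_\theta$ generated by $\alpha_A$, and the $\Z$-action on $\mathcal{A}_{\frac{1}{\theta}}$ generated by $\beta_B$. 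What is missing is a compatible $\Z$-action on the pre-imprimitivity bimodule $\mathcal{S}(\R)$, and the natural candidate is $\tau_n:=S_A^n$, where $S_A$ is the operator assembled from $S_J$ and $S_P$ as in Section~4.

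First I would promote the intertwining identities of Proposition~\ref{prop:ip-comp} from $J,P$ to all four generators: since $S_J$ and $S_P$ are invertible on $L^2(\R)$ and preserve $\mathcal{S}(\R)$, substituting $S_W^{-1}(f),S_W^{-1}(g)$ into those identities and applying $\alpha_W^{-1}$, respectively $\beta_{W^{-1}}^{-1}$, gives, for every $W\in\{J,P,J^{-1},P^{-1}\}$,
$$
\ip{S_W(f),S_W(g)}_{\mathcal{A}^\infty}=\alpha_W\big(\ip{f,g}_{\mathcal{A}^\infty}\big),\qquad {}_{\mathcal{B}^\infty}\ip{S_W(f),S_W(g)}=\beta_{W^{-1}}\big({}_{\mathcal{B}^\infty}\ip{f,g}\big).
$$
Fixing a word $A=W_1W_2\cdots W_n$ with $W_k\in\{J,P,J^{-1},P^{-1}\}$ and setting $S_A=S_{W_1}\circ\cdots\circ S_{W_n}$, iterating these identities and using that $\alpha$ and $\beta$ are homomorphisms yields
$$
\ip{S_A(f),S_A(g)}_{\mathcal{A}^\infty}=\alpha_{W_1}\cdots\alpha_{W_n}\big(\ip{f,g}_{\mathcal{A}^\infty}\big)=\alpha_A\big(\ip{f,g}_{\mathcal{A}^\infty}\big)
$$
and
$$
{}_{\mathcal{B}^\infty}\ip{S_A(f),S_A(g)}=\beta_{W_1^{-1}}\cdots\beta_{W_n^{-1}}\big({}_{\mathcal{B}^\infty}\ip{f,g}\big)=\beta_{W_1^{-1}W_2^{-1}\cdots W_n^{-1}}\big({}_{\mathcal{B}^\infty}\ip{f,g}\big).
$$
The key elementary fact is that $TWT=W^{-1}$ for every $W\in\{J,P,J^{-1},P^{-1}\}$ --- conjugation by $T$ merely changes the sign of the off-diagonal entries, so one checks this directly on $J$ and $P$ --- whence $W_1^{-1}W_2^{-1}\cdots W_n^{-1}=(TW_1T)(TW_2T)\cdots(TW_nT)=T(W_1\cdots W_n)T=TAT=B$. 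Therefore ${}_{\mathcal{B}^\infty}\ip{S_A(f),S_A(g)}=\beta_B\big({}_{\mathcal{B}^\infty}\ip{f,g}\big)$.

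Iterating once more (and inverting for negative powers) shows that $\tau_n=S_A^n$ satisfies conditions (1) and (2) of Proposition~\ref{prop:Combes} for the generators $\alpha_A$ of the $\Z$-action on $\mathcal{A}_\theta$ and $\beta_B$ of the $\Z$-action on $\mathcal{A}_{\frac{1}{\theta}}$, hence for all of $\Z$; strong continuity is automatic because $\Z$ is discrete, and each $S_A$ maps $\mathcal{S}(\R)$ into itself. Proposition~\ref{prop:Combes} then gives the desired Morita equivalence of $\mathcal{A}_\theta\rtimes_A\Z$ and $\mathcal{A}_{\frac{1}{\theta}}\rtimes_B\Z$. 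I expect the only genuinely delicate point to be bookkeeping the order reversal on the $\mathcal{B}$-side together with the identity $TWT=W^{-1}$: this is precisely what forces the partner matrix to be $B=TAT$ rather than $A$ or $A^{-1}$. One should also recall, as in the remark following the definition of $S_A$, that $S_A$ depends on the chosen word only up to a sign, which is harmless here since a unimodular scalar does not affect the sesquilinear inner-product identities, and fixing one word yields an honest homomorphism $\Z\to\mathcal{U}(L^2(\R))$.
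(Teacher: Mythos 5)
Your proposal is correct and follows essentially the same route as the paper: the action $\tau_n=S_A^n$ on $\mathcal{S}(\R)$, iteration of the inner-product intertwining identities from Proposition~\ref{prop:ip-comp}, the observation $TWT=W^{-1}$ for the generators (which makes the $\mathcal{B}^\infty$-side product $W_1^{-1}\cdots W_n^{-1}$ equal $TAT=B$), and an appeal to Proposition~\ref{prop:Combes}. Your explicit extension of the intertwining identities to $J^{-1},P^{-1}$ is a point the paper passes over silently, but it is the same argument.
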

\begin{proof}
	Let $\tau:\Z\to \ip{ S_A} \subseteq \mathcal{U}(L^2(\R))$ be the group homomorphism sending $n$ to $(S_A)^n$. By definition and Proposition \ref{prop:ip-comp}, for all $f,g\in \mathcal{S}(\R)$ we have
	\begin{align*}
		\ip{ S_A(f), S_A(g) }_{\mathcal{A}^\infty} &= \ip{ S_{W_1}\circ \cdots S_{W_n}(f), S_{W_1}\circ \cdots S_{W_n}(g) }_{\mathcal{A}^\infty} \\
		&= \alpha_{W_1} ( \ip{ S_{W_2}\circ \cdots S_{W_n}(f), S_{W_2}\circ \cdots S_{W_n}(f) }_{\mathcal{A}^\infty} ) \\
		&= \cdots \\
		&= \alpha_{W_1}\cdots \alpha_{W_n}( \ip{f,g}_{\mathcal{A}^\infty}  ) \\
		&= \alpha_{W_1\cdots W_n}( \ip{f,g}_{\mathcal{A}^\infty} ) \\
		&= \alpha_A( \ip{f,g}_{\mathcal{A}^\infty} ).
	\end{align*}
	Replacing $f$ by $S_A^{-1}(f)$ and $g$ by $S_A^{-1}(g)$, the identity above becomes
	$$
	\ip{f,g}_{\mathcal{A}^\infty} = \alpha_A( \ip{S_A^{-1}(f), S_A^{-1}(g) }_{\mathcal{A}^\infty} ).
	$$
	Applying $\alpha_{A^{-1}}$ to both sides we get
	$$
	\alpha_{A^{-1}}( \ip{f,g}_{\mathcal{A}^\infty}  ) = \ip{ S_A^{-1}(f), S_A^{-1}(g) }_{\mathcal{A}^\infty}.
	$$
	Therefore for any $n\in \Z$, we have
	\begin{align*}
		\ip{ \tau_n(f), \tau_n(g) }_{\mathcal{A}^\infty} &= \ip{ (S_A)^n(f), (S_A)^n(g) }_{\mathcal{A}^\infty} = (\alpha_A)^n( \ip{f,g}_{\mathcal{A}^\infty} ) \\
		&= \alpha_{A^n}( \ip{f,g}_{\mathcal{A}^\infty} ).
	\end{align*}
	Similarly we have
	\begin{align*}
		_{\mathcal{B}^\infty}\ip{ S_A(f), S_A(g) } &= {}_{\mathcal{B}^\infty} \ip{ S_{W_1}\circ \cdots S_{W_n}(f), S_{W_1}\circ \cdots S_{W_n}(g) } \\
		&= \beta_{ W_1^{-1} } ( {}_{\mathcal{B}^\infty} \ip{ S_{W_2}\circ \cdots S_{W_n}(f), S_{W_2}\circ \cdots S_{W_n}(g) } ) \\
		&= \cdots \\
		&= \beta_{W_1^{-1}}\cdots \beta_{W_n^{-1}}(  {}_{\mathcal{B}^\infty}\ip{f,g}  ) \\
		&= \beta_{W_1^{-1}\cdots W_n^{-1} }( {}_{\mathcal{B}^\infty} \ip{f,g} ).
	\end{align*}
	Note that for matrices of the form $W = \begin{pmatrix}
	a & b \\
	c & a
	\end{pmatrix}$, we have
	$$
	W^{-1} = \begin{pmatrix}
	a & -b \\
	-c & a 
	\end{pmatrix} = TWT.
	$$
	Since both generators $J$ and $P$ (and their inverses) have this form, we have 
	$$
	W_1^{-1}W_2^{-1}\cdots W_n^{-1} = (TW_1T)(TW_2T)\cdots (TW_nT) = T(W_1W_2\cdots W_n)T = TAT = B,
	$$
	and hence
	$$
	_{\mathcal{B}^\infty}\ip{ S_A(f), S_A(g) } = \beta_B( {}_{\mathcal{B}^\infty}\ip{f,g} ) .
	$$
	As before, replacing $f$ and $g$ by $S_A^{-1}(f)$ and $S_A^{-1}(g)$, and applying $\beta_{B^{-1}}$ to both sides we obtain
	$$
	_{\mathcal{B}^\infty}\ip{ S_A^{-1}(f), S_A^{-1}(g) } = \beta_{B^{-1}}( {}_{\mathcal{B}^\infty}\ip{f,g}  ).
	$$
	Therefore for each $n\in \Z$ we have
	\begin{align*}
		{}_{\mathcal{B}^\infty}\ip{ \tau_n(f), \tau_n(g) } &= {}_{\mathcal{B}^\infty}\ip{ (S_A)^n(f), (S_A)^n(g) } = (\beta_B)^n( \ip{f,g}_{\mathcal{A}^\infty} ) \\
		&= \beta_{B^n}( \ip{f,g}_{\mathcal{A}^\infty} ).
	\end{align*}
	Now we are done because the action $\tau:\Z\curvearrowright \mathcal{S}(\R)$ satisfies all the assumptions of Proposition \ref{prop:Combes}.
\end{proof}

Using the theorem above, we can now give a precise description of the Morita equivalence classes. For two irrational numbers $\lambda$ and $\mu$, we write $\lambda\sim_{Mob} \mu$ if they belong to the same $GL_2({\Z})$-orbit. More precisely, $\lambda\sim_{Mob} \mu$ if and only if $\mu = \frac{a\lambda + b}{c\lambda + d}$ for some matrix $\begin{pmatrix}
a & b \\
c & d
\end{pmatrix}$ in $GL_2(\Z)$.

\begin{thm}  \label{thm:main_morita}
	Let $\theta, \theta'$ be irrational numbers and $A,B\in SL_2(\Z)$ be matrices of infinite order. Then the following are equivalent:
	\begin{enumerate}
		\item $\mathcal{A}_{\theta}\rtimes_A \Z$ and $\mathcal{A}_{\theta'} \rtimes_B \Z$ are Morita equivalent;
		\item $\theta \sim_{Mob} \theta'$ and $I-A^{-1}\sim_{eq}I-B^{-1}$.
	\end{enumerate}
\end{thm}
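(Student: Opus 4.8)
The plan is to prove the two implications separately, reading off the condition on $\theta$ and the condition on the matrices from the invariants that a Morita equivalence transports. For $(1)\Rightarrow(2)$, recall that two separable $C^*$-algebras are Morita equivalent precisely when they are stably isomorphic, so $(1)$ gives an isomorphism $(\mathcal{A}_\theta\rtimes_A\Z)\otimes\mathcal{K}\cong(\mathcal{A}_{\theta'}\rtimes_B\Z)\otimes\mathcal{K}$. Applying $K_1$ and using the split exact sequence preceding Theorem~\ref{thm:K}, we have $K_1(\mathcal{A}_\theta\rtimes_A\Z)\cong\Z^2\oplus\operatorname{coker}(I_2-A^{-1})$, and the isomorphism type of this abelian group determines both the rank and the torsion invariants of $\operatorname{coker}(I_2-A^{-1})$, hence the Smith normal form of $I_2-A^{-1}$; therefore $K_1(\mathcal{A}_\theta\rtimes_A\Z)\cong K_1(\mathcal{A}_{\theta'}\rtimes_B\Z)$ forces $I_2-A^{-1}\sim_{eq}I_2-B^{-1}$. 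For the condition on $\theta$ I would use that $\mathcal{A}_\theta\rtimes_A\Z$ is unital, simple and monotracial (Theorem~\ref{thm:structure}), so its unique trace extends --- uniquely up to a positive scalar --- to a densely defined lower semicontinuous trace on the stabilization, whose induced map on $K_0$ coincides with $(\tau_A)_*$ under the standard identification of $K_0$-groups. Transporting this trace along the stable isomorphism and invoking $(\tau_A)_*(K_0)=\Z+\theta\Z$, $(\tau_B)_*(K_0)=\Z+\theta'\Z$ from Theorem~\ref{thm:K}, one obtains $\lambda(\Z+\theta\Z)=\Z+\theta'\Z$ for some $\lambda>0$; as in Rieffel's argument~\cite{Rie81}, an equality of this kind between rank-two subgroups of $\R$ is equivalent to $\theta'=\frac{a\theta+b}{c\theta+d}$ for some matrix in $GL_2(\Z)$, i.e.\ $\theta\sim_{Mob}\theta'$.

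For $(2)\Rightarrow(1)$, the starting point is that the M\"obius action of $GL_2(\Z)$ on the irrationals is generated by the two moves $\theta\mapsto\theta+1$ and $\theta\mapsto1/\theta$, so $\theta\sim_{Mob}\theta'$ means $\theta'$ is reached from $\theta$ by finitely many such moves (and their inverses). For the translation move, Theorem~\ref{thm:main_iso} already gives $\mathcal{A}_\theta\rtimes_A\Z\cong\mathcal{A}_{\theta+1}\rtimes_A\Z$, since $\theta=\theta+1\pmod\Z$ and $I_2-A^{-1}\sim_{eq}I_2-A^{-1}$. For the inversion move, Theorem~\ref{thm:morita_integer_reciprocal} shows $\mathcal{A}_\theta\rtimes_A\Z$ is Morita equivalent to $\mathcal{A}_{1/\theta}\rtimes_{TAT}\Z$ with $T=\diag(-1,1)$, and since $T\in GL_2(\Z)$ with $T^2=I_2$ we have $I_2-(TAT)^{-1}=T(I_2-A^{-1})T\sim_{eq}I_2-A^{-1}$. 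Now assume $(2)$ and write $\theta'$ as a word in these moves applied to $\theta$, tracking the matrix: translations fix it, inversions conjugate it by $T$, and $T^2=I_2$, so the resulting matrix $A'$ is either $A$ or $TAT$ --- in particular of infinite order, and with $I_2-(A')^{-1}\sim_{eq}I_2-A^{-1}\sim_{eq}I_2-B^{-1}$ (the last equivalence by hypothesis). Composing the corresponding chain of isomorphisms and Morita equivalences shows $\mathcal{A}_\theta\rtimes_A\Z$ is Morita equivalent to $\mathcal{A}_{\theta'}\rtimes_{A'}\Z$; and since $\theta'=\theta'\pmod\Z$, Theorem~\ref{thm:main_iso} gives $\mathcal{A}_{\theta'}\rtimes_{A'}\Z\cong\mathcal{A}_{\theta'}\rtimes_B\Z$. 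Composing once more yields $(1)$.

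I expect the only genuine obstacle to lie in $(1)\Rightarrow(2)$: one must check that the unique trace is transported to the stabilization in the correct normalization --- so that one really recovers the scaling relation $\lambda(\Z+\theta\Z)=\Z+\theta'\Z$ and not something weaker --- and one must reproduce (or carefully cite) Rieffel's deduction that this relation pins down the $GL_2(\Z)$-orbit of $\theta$. The hard analytic ingredient, namely the construction of the metaplectic intertwiners on $\mathcal{S}(\R)$, is already packaged in Theorem~\ref{thm:morita_integer_reciprocal}, so everything else reduces to citing Theorem~\ref{thm:main_iso} or to routine manipulations with Smith normal forms.
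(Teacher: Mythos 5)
Your proposal is correct and follows essentially the same route as the paper: $(2)\Rightarrow(1)$ by reducing to the moves $\theta\mapsto\theta+1$ (handled by Theorem \ref{thm:main_iso}) and $\theta\mapsto 1/\theta$ (handled by Theorem \ref{thm:morita_integer_reciprocal} together with $I-(TAT)^{-1}=T(I-A^{-1})T$), and $(1)\Rightarrow(2)$ by reading the Smith normal form of $I-A^{-1}$ off $K_1$ and the $GL_2(\Z)$-orbit of $\theta$ off the trace data. Your transport of the unique trace through the stable isomorphism is just a mild repackaging of the paper's argument, which instead induces the trace through the imprimitivity bimodule via \cite[Corollary 2.6]{Rie81} (or, equivalently, uses the order isomorphism on $K_0$ and uniqueness of states as in Remark \ref{rem:alt_pf}), and then invokes the same elementary calculation of Rieffel to conclude $\theta\sim_{Mob}\theta'$ from $\Z+\theta\Z=r(\Z+\theta'\Z)$.
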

\begin{proof}
	$(2)\Longrightarrow (1)$: It suffices to consider the cases $\theta' = \frac{1}{\theta}$ and $\theta' = \theta + 1$. Note that by Theorem \ref{thm:main_iso} in the latter case we actually get an isomorphism. Therefore assume $\theta' = \frac{1}{\theta}$. Theorem $\ref{thm:morita_integer_reciprocal}$ says that $\mathcal{A}_{\theta} \rtimes_A \Z$ is Morita equivalent to $\mathcal{A}_{\frac{1}{\theta}} \rtimes_{(TAT)} \Z$. Note that
	$$
	I-(TAT)^{-1} = I - TA^{-1}T = T(I-A^{-1})T.
	$$
	Therefore $I-(TAT)^{-1} \sim_{eq} I-A^{-1}$ and by Theorem \ref{thm:main_iso} we have 
	$$
	\mathcal{A}_{\frac{1}{\theta}}\rtimes_{TAT} \Z \cong \mathcal{A}_{\frac{1}{\theta}}\rtimes_{A} \Z \cong \mathcal{A}_{\frac{1}{\theta}}\rtimes_{B} \Z.
	$$ 
	In conclusion, we have
	$$
	\mathcal{A}_\theta \rtimes_A \Z \sim_{\text{Morita}} \mathcal{A}_{\frac{1}{\theta}}\rtimes_{TAT} \Z \cong \mathcal{A}_{\frac{1}{\theta}}\rtimes_{B} \Z.
	$$
	
	$(1)\Longrightarrow (2)$: Let us write $\mathcal{C} := \mathcal{A}_{\theta} \rtimes_A \Z$ and $\mathcal{D}:= \mathcal{A}_{\theta'}\rtimes_B \Z$. As in Theorem \ref{thm:main_iso} the condition on matrix equivalence comes from the $K_1$ groups. Since $\mathcal{C}$ and $\mathcal{D}$ are monotracial and the images under the tracial states are $\Z+\theta\Z$ and $\Z+\theta'\Z$, respectively, we get $\theta\sim_{Mob} \theta'$ in exactly the same way as in the proof of \cite[Theorem 4]{Rie81}. 
	
	For the reader's convenience we briefly recall the main ingredients in the proof of \cite[Theorem 4]{Rie81}. Let $\tau$ be the unique trace on $\mathcal{C}$, and let $X$ be a $\mathcal{C}$-$\mathcal{D}$-imprimitivity bimodule.  Define a positive tracial functional $\tau_X$ on $\mathcal{D}$ by
	$$
	\tau_X( \ip{x,y}_\mathcal{D}  ) := \tau( _\mathcal{C}\ip{y,x} )\;\;\;\;\;\; (x,y\in X).
	$$
	By \cite[Corollary 2.6]{Rie81}, we have $\tau_*( K_0(\mathcal{C})  ) = (\tau_X)_*( K_0(\mathcal{D}) )$. Since $\mathcal{D}$ has a unique trace, $\tau_X$ must be a scale multiple of that trace. This implies that
	$$
	\Z + \theta \Z = r(\Z + \theta' \Z )
	$$
	for some positive real number $r$. Now an elementary calculation shows that $\theta$ and $\theta'$ are in the same $GL_2(\Z)$ orbit (see \cite[p. 425]{Rie81}).
\end{proof}

\begin{rem} \label{rem:alt_pf}
	Here we give an alternative proof of $(1)\Longrightarrow (2)$ in Theorem \ref{thm:main_morita}. For notational convenience we again write $\mathcal{C} := \mathcal{A}_{\theta} \rtimes_A \Z$ and $\mathcal{D}:= \mathcal{A}_{\theta'}\rtimes_B \Z$. Suppose $\mathcal{C}$ and $\mathcal{D}$ are Morita equivalent. Then there is an order isomorphism $f:(K_0(\mathcal{C}), K_0(\mathcal{C})^+)\to (K_0(\mathcal{D}), K_0(\mathcal{D})^+)$. Let $\tau_\mathcal{C}$ and $\tau_\mathcal{D}$ be the unique traces on $\mathcal{C}$ and $\mathcal{D}$, respectively. By Theorem 5.2.2 in \cite{Ror}, the ordered group $(K_0(\mathcal{C}), K_0(\mathcal{C})^+, [1_\mathcal{C}]_0))$ has a unique state, i.e., the induced map of $\tau_\mathcal{C}$. This implies that $(\tau_\mathcal{D})_*\circ f$ is a scalar multiple of $(\tau_\mathcal{C})_*$, and we arrive at the identity
	$$
	\Z +\theta \Z = r(\Z+\theta' \Z)
	$$ 
	for some positive real number $r$.
\end{rem}


\section{Morita equivalence classes of $\mathcal{A}_{\theta}\rtimes_\alpha F$}

In this final section we determine the Morita equivalence classes for crossed prdocuts of the form $\mathcal{A}_\theta \rtimes_\alpha F$, where $F$ is a finite subgroup of $SL_2(\Z)$ (recall that the isomorphism classes were studied in \cite{ELPW10}). The argument is similar to the one we used in the previous section.

Let $F$ be a finite subgroup of $SL_2(\Z)$. Up to conjugacy $F$ is isomorphic to one of $\Z_2$, $\Z_3$, $\Z_4$, or $\Z_6$. In any case, let $\alpha:F\to \Aut(\mathcal{A}_{\theta})$ be the restriction of the action of $SL_2(\Z)$ on $\mathcal{A}_{\theta}$. Let us first recall the main result of \cite{ELPW10}.

\begin{thm} \cite[Theorem 0.1]{ELPW10} \label{thm:ELPW}
	Let $\theta, \theta'$ be irrational numbers and $F$ be a finite subgroup of $SL_2(\Z)$. Then $\mathcal{A}_{\theta}\rtimes_\alpha F$ is a simple monotracial AF algebra. For $F=\Z_k$ ($k=2,3,4,6$), the $K_0$ groups are given by
	\begin{align*}
	K_0(\mathcal{A}_{\theta} \rtimes_\alpha \Z_2) &\cong \Z^6,\;\;\;\;\;\; K_0(\mathcal{A}_{\theta} \rtimes_\alpha \Z_3) \cong \Z^8 \\
	K_0(\mathcal{A}_{\theta} \rtimes_\alpha \Z_4) &\cong \Z^9,\;\;\;\;\;\; K_0(\mathcal{A}_{\theta} \rtimes_\alpha \Z_6) \cong \Z^{10}.
	\end{align*}
	In any case, the image of $K_0(\mathcal{A}_{\theta}\rtimes_\alpha F)$ under the unique tracial state is equal to $\frac{1}{k}(\Z+\theta\Z)$. As a consequence, the following are equivalent:
	\begin{enumerate}
		\item $\mathcal{A}_{\theta}\rtimes_\alpha \Z_k$ and $\mathcal{A}_{\theta'} \rtimes_\alpha \Z_{k'}$ are $*$-isomorphic;
		\item $\theta = \pm \theta' \pmod \Z$ and $k = k'$.
	\end{enumerate}
\end{thm}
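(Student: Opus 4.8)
The plan is to first establish the coarse structure of $\mathcal{A}_\theta\rtimes_\alpha F$ and then reduce the $K$-theory to an orbifold computation. For the structural part, I would check that for each nontrivial $g\in F$ the weak extension of $\alpha_g$ to the hyperfinite $II_1$ factor $\mathcal{A}_\theta''$ is outer --- this is the finite-order analogue of \cite[Lemma 5.10]{ELPW10} and can be read off from the explicit form of $\alpha_g$ on the generators --- so that $\alpha\colon F\curvearrowright\mathcal{A}_\theta$ has the tracial Rokhlin property. Since $\mathcal{A}_\theta$ is a simple separable unital monotracial $C^*$-algebra of tracial rank zero, the usual permanence results (in the spirit of \cite{OP06b}) then show $\mathcal{A}_\theta\rtimes_\alpha F$ is again simple, separable, unital, nuclear, monotracial, of tracial rank zero, and satisfies the UCT. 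Hence it is classified by its Elliott invariant, and it will be an AF algebra precisely when $K_1=0$ and $K_0$ is torsion free.

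For the $K$-theory I would exploit Rieffel's deformation: the formulas for $\alpha_A$ and for the cocycle $\omega_\theta$ depend continuously on $\theta$, so $(\mathcal{A}_{t\theta}\rtimes_\alpha F)_{t\in[0,1]}$ is a continuous field whose fibre at $t=0$ is $C(\T^2)\rtimes_\alpha F$; by homotopy invariance of $K$-theory for such fields (equivalently, via the Baum--Connes isomorphism for the amenable group $\Z^2\rtimes F$ together with the fact that $\omega_{t\theta}$ is a homotopy of cocycles) one gets $K_*(\mathcal{A}_\theta\rtimes_\alpha F)\cong K_*(C(\T^2)\rtimes_\alpha F)\cong K^*_F(\T^2)$. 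In each of the four cases $\T^2/F$ is a $2$-sphere with cone points of orders $(2,2,2,2)$, $(3,3,3)$, $(2,4,4)$, $(2,3,6)$ respectively: this is a Riemann--Hurwitz computation once one locates, via the Lefschetz fixed-point formula, the fixed-point sets of the nontrivial elements (the generator of $F\cong\Z_k$ acts on $\T^2$ with $|\det(I-R^j)|$ fixed points in its $j$-th power) and organizes them into $F$-orbits. A Mayer--Vietoris argument splitting $\T^2$ into $F$-invariant disks around the singular set and the free complement --- identifying $K^*_F$ of the latter with the $K$-theory of the punctured quotient sphere --- then yields $K^1_F(\T^2)=0$ and $K^0_F(\T^2)\cong\Z^{\,2+\sum_i(n_i-1)}$, i.e.\ $\Z^6,\Z^8,\Z^9,\Z^{10}$, with no torsion. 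Controlling the connecting maps in this Mayer--Vietoris sequence and the representation-ring bookkeeping at each cone point is the main technical obstacle.

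For the trace, write $\tau=\tau_\theta\circ E$ with $E$ the canonical conditional expectation onto $\mathcal{A}_\theta$. Restricting to $i(\mathcal{A}_\theta)$ already gives $\Z+\theta\Z\subseteq(\tau)_*(K_0)$, and the projection $p_0:=\frac1k\sum_{g\in F}u_g$ satisfies $\tau(p_0)=\frac1k$; the full assertion $(\tau)_*(K_0(\mathcal{A}_\theta\rtimes_\alpha F))=\frac1k(\Z+\theta\Z)$ I would then obtain by evaluating $\tau$ on the explicit generators of $K^0_F(\T^2)$ produced by the Mayer--Vietoris computation, each of which has trace in $\frac1k(\Z+\theta\Z)$ (the $\frac1k$ coming from the cone points, the $\theta$ from a winding-number contribution).

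Finally, for the equivalence: since $\mathcal{A}_\theta\rtimes_\alpha F$ is a unital simple AF algebra, Elliott's classification reduces $(1)$ to an order-unit-preserving isomorphism $K_0(\mathcal{A}_\theta\rtimes_\alpha\Z_k)\cong K_0(\mathcal{A}_{\theta'}\rtimes_\alpha\Z_{k'})$. Such an isomorphism intertwines the unique states on $K_0$, hence the normalized trace maps, so $\frac1k(\Z+\theta\Z)=\frac1{k'}(\Z+\theta'\Z)$ as subgroups of $\R$; intersecting with $\Q$ and using that $\theta,\theta'$ are irrational gives $\frac1k\Z=\frac1{k'}\Z$, whence $k=k'$, and then $\Z+\theta\Z=\Z+\theta'\Z$ forces $\theta'\equiv\pm\theta\pmod\Z$. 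Conversely $\theta'=\theta+1$ gives literally the same algebra, and $\theta'=-\theta$ is handled by the flip $\Phi\colon\mathcal{A}_\theta\to\mathcal{A}_{-\theta}$, $U_1\mapsto U_1$, $U_2\mapsto U_2^{*}$, which intertwines $\alpha_A$ with $\alpha_{TAT}$ for $T=\begin{pmatrix}-1&0\\0&1\end{pmatrix}$; since conjugation by $T$ sends $F$ to a conjugate subgroup of $SL_2(\Z)$, composing with a suitable $\alpha_Q$ yields $\mathcal{A}_\theta\rtimes_\alpha F\cong\mathcal{A}_{-\theta}\rtimes_\alpha F$, which closes the loop.
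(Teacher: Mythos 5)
A preliminary remark: the paper does not prove this statement at all --- Theorem \ref{thm:ELPW} is quoted from \cite[Theorem 0.1]{ELPW10} and used as a black box in Section 5. So the only meaningful comparison is with the proof in \cite{ELPW10}, and your overall strategy (outerness in the trace representation giving the tracial Rokhlin property, hence tracial rank zero of the crossed product; $K$-theory independent of $\theta$ via Baum--Connes for $\Z^2\rtimes F$ and a homotopy of cocycles, identified with $K^*_F(\T^2)$ and computed orbifold-wise; then Lin's classification to get AF-ness and the isomorphism criterion, with the flip intertwining $\alpha_A$ and $\alpha_{TAT}$ for the converse) is essentially the route taken there.

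There is, however, a genuine gap at the step that is the technical heart of the theorem: the trace range. The elements you actually exhibit --- $[1]_0$, $i_*([p_\theta]_0)$ and $p_0=\frac{1}{k}\sum_{g}u_g$ --- generate only $\frac{1}{k}\Z+\theta\Z$, which is strictly smaller than $\frac{1}{k}(\Z+\theta\Z)$ (classes of trace congruent to $\theta/k$ are missing), and the reverse containment $(\tau)_*(K_0)\subseteq\frac{1}{k}(\Z+\theta\Z)$ is never addressed. Your proposed fix, evaluating $\tau$ on the generators produced by the Mayer--Vietoris computation of $K^0_F(\T^2)$, cannot work as stated: those generators live in the fibre $C(\T^2)\rtimes F$ at $\theta=0$, where the trace takes only rational values on $K_0$ and carries no $\theta$-dependence, and the identification $K_*(\mathcal{A}_\theta\rtimes_\alpha F)\cong K^*_F(\T^2)$ coming from the deformation/Baum--Connes argument is abstract, so it gives no control of the trace pairing on the $\theta$-fibre. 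To recover the $\theta$-contribution one must either construct explicit projections or equivariant projective modules in $\mathcal{A}_\theta\rtimes_\alpha F$ (Rieffel- and Walters-type projections, as in \cite{ELPW10} and the works cited there) or track the canonical trace along the deformation by a Chern-character/pairing computation; this is precisely where most of the work in \cite{ELPW10} lies. The remaining sketched steps (the orbifold count giving ranks $6,8,9,10$ with vanishing $K^1$, and the derivation of the isomorphism criterion from classification plus the identity $\frac{1}{k}(\Z+\theta\Z)=\frac{1}{k'}(\Z+\theta'\Z)$) are correct in outline, though the Mayer--Vietoris bookkeeping you flag as the main obstacle is likewise left unverified.
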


As before, to show the Morita equivalence of two crossed products we aim to construct suitable action of $F$ on $\mathcal{S}(\R)$. In fact the only case that remains to consider is when $F = \Z_6$. Let $H = \begin{pmatrix}
1 & 1 \\
-1 & 0
\end{pmatrix}$ and $H' = \begin{pmatrix}
1 & -1 \\
1 & 0
\end{pmatrix}$. Observe that both $H$ and $H'$ have order six. Define the operator $S_H:\mathcal{S}(\R)\to \mathcal{S}(\R)$ by
$$
S_H(f)(x) := e^{\pi i/12}\theta^{-1/2}\int_\R e( (2xy-y^2)/(2\theta)  ) f(y)\;dy.
$$
Note that $S_H$ is the hexic transform defined by Walters in \cite{Wal04} (see also \cite[Proposition 4.6]{ELPW10}). It was shown in \cite{Wal04} that $S_H$ extends to a unitary operator on $L^2(\R)$ that has period six. 

\begin{prop}
	For all $f\in \mathcal{S}(\R)$ and $\ell\in \Z^2$, we have
	$$
	S_H(f.U_\ell) = S_H(f).\alpha_H(U_\ell),\;\;\;\;\;\;  S_H(V_\ell.f)(x) = \beta_{H'}(V_\ell).S_H(f).
	$$
	As a consequence,
	$$
	\ip{ S_H(f), S_H(g) }_{\mathcal{A}^\infty} = \alpha_H ( \ip{f,g}_{\mathcal{A}^\infty}  ),\;\;\;\;\;\; _{\mathcal{B}^\infty}\ip{ S_H(f), S_H(g) } = \beta_{H'}( _{\mathcal{B}^\infty}\ip{f,g} ).
	$$
\end{prop}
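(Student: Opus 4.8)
The plan is to deduce the whole proposition from what is already known about $S_J$ and $S_P$, by exploiting a factorization of the hexic transform. The first step is to check, directly from Definition~\ref{defn:S_J} and the definition of $S_H$, that
$$
(S_J\circ S_P)(f)(x) = \theta^{-1/2}\int_\R e(xy/\theta)\,e\big(-y^2/(2\theta)\big)f(y)\;dy = \theta^{-1/2}\int_\R e\big((2xy-y^2)/(2\theta)\big)f(y)\;dy ,
$$
so that $S_H = e^{\pi i/12}\,(S_J\circ S_P)$ as operators on $\mathcal{S}(\R)$, and hence on $L^2(\R)$. The scalar $e^{\pi i/12}$ is central and unimodular, so it passes harmlessly through every computation below and disappears from all inner-product expressions (the $\mathcal{A}^\infty$- and $\mathcal{B}^\infty$-valued inner products are unchanged when both entries are multiplied by a common unimodular scalar). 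In parallel I would record the two identities $H = JP$ and $H' = J^{-1}P^{-1}$ in $SL_2(\Z)$; both are one-line $2\times 2$ multiplications, and the second is exactly what forces $H'$ (rather than $H^{-1}$) to appear on the $\mathcal{B}^\infty$-side, just as in Theorem~\ref{thm:morita_integer_reciprocal}.

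With these in hand, the intertwining relations follow by composing Lemma~\ref{lem:action-comp} with its $S_P$-analogue, using that $\alpha$ and $\beta$ are group homomorphisms: for $\ell\in\Z^2$,
$$
S_H(f.U_\ell) = e^{\pi i/12}\,S_J\big(S_P(f).\alpha_P(U_\ell)\big) = e^{\pi i/12}\,S_J(S_P(f)).\,\alpha_J\big(\alpha_P(U_\ell)\big) = S_H(f).\,\alpha_{JP}(U_\ell) = S_H(f).\,\alpha_H(U_\ell),
$$
and likewise
$$
S_H(V_\ell.f) = e^{\pi i/12}\,S_J\big(\beta_{P^{-1}}(V_\ell).S_P(f)\big) = e^{\pi i/12}\,\beta_{J^{-1}}\big(\beta_{P^{-1}}(V_\ell)\big).\,S_J(S_P(f)) = \beta_{J^{-1}P^{-1}}(V_\ell).\,S_H(f) = \beta_{H'}(V_\ell).\,S_H(f).
$$
The asserted compatibility of the inner products is then obtained simply by applying Proposition~\ref{prop:ip-comp} twice:
$$
\ip{S_H(f),S_H(g)}_{\mathcal{A}^\infty} = \alpha_J\big(\ip{S_P(f),S_P(g)}_{\mathcal{A}^\infty}\big) = (\alpha_J\circ\alpha_P)\big(\ip{f,g}_{\mathcal{A}^\infty}\big) = \alpha_H\big(\ip{f,g}_{\mathcal{A}^\infty}\big),
$$
and symmetrically ${}_{\mathcal{B}^\infty}\ip{S_H(f),S_H(g)} = (\beta_{J^{-1}}\circ\beta_{P^{-1}})\big({}_{\mathcal{B}^\infty}\ip{f,g}\big) = \beta_{H'}\big({}_{\mathcal{B}^\infty}\ip{f,g}\big)$. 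Alternatively, once the intertwining relations are in place one may rerun the $L^2$-trick argument of Proposition~\ref{prop:ip-comp} verbatim, with $J$ replaced by $H$ and $J^{-1}$ by $H'$.

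If one would rather avoid the factorization, the intertwining relations can instead be verified head-on exactly as for $S_J$ and $S_P$: evaluate both sides on the four generators $U_1,U_2,V_1,V_2$, insert the integral kernel of $S_H$, carry out the substitution $y\mapsto y\mp\theta$ (resp.\ $y\mapsto y\mp1$), complete the square in the exponent so that the residual linear phase is absorbed into a translation of $x$, and then read off the right-hand sides from Proposition~\ref{prop:module_action}, using that $\alpha_H$ and $\beta_{H'}$ act on the generators by left multiplication of the index by $H$, resp.\ $H'$ (Proposition~\ref{prop:action}). I do not expect a genuine obstacle on either route; the one place that needs real care is the bookkeeping of the cocycle phases $e(mn\theta/2)$ and $e(mn/(2\theta))$ against the constant $e^{\pi i/12}$, and the virtue of the factorization route is precisely that it confines all of that bookkeeping to the single identity $S_H = e^{\pi i/12}(S_J\circ S_P)$.
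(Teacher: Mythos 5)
Your proof is correct, but it takes a genuinely different route from the paper's. The paper proves the intertwining relations by brute force: it evaluates $S_H(f.U_i)$ and $S_H(V_i.f)$ on the four generators $U_1,U_2,V_1,V_2$ directly from the integral kernel of $S_H$, performing the substitutions and completing the square in each case, and then notes that the inner-product compatibility follows as in Proposition \ref{prop:ip-comp} (so your ``head-on'' fallback at the end is essentially the paper's argument). You instead observe the factorizations $S_H = e^{\pi i/12}\,(S_J\circ S_P)$, $H=JP$ and $H'=J^{-1}P^{-1}$, all of which are correct (the kernel computation $(S_JS_Pf)(x)=\theta^{-1/2}\int e((2xy-y^2)/(2\theta))f(y)\,dy$ checks out, and the unimodular scalar indeed drops out of both sesquilinear pairings), and then you simply compose Lemma \ref{lem:action-comp}, its $S_P$-analogue, and Proposition \ref{prop:ip-comp}, using $\alpha_A(U_\ell)=U_{A\ell}$ and $\beta$ likewise so that the already-proved lemmas apply to $\alpha_P(U_\ell)$ and $\beta_{P^{-1}}(V_\ell)$. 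What your route buys is economy and conceptual clarity: no new integral computations, and the appearance of $H'$ on the $\mathcal{B}^\infty$-side is explained structurally by $H'=J^{-1}P^{-1}=THT$, matching the mechanism in Theorem \ref{thm:morita_integer_reciprocal}. What the paper's route buys is an independent, self-contained verification of Walters' hexic transform formula that does not rely on choosing a particular word for $H$ in the generators. One small remark: the period-six property of $S_H$ (needed later, in Theorem \ref{thm:mortia_finite}, to get a $\Z_6$-action) is not automatic from your factorization and the scalar $e^{\pi i/12}$ is exactly what makes it work, but that property is not part of the present proposition, so your argument is complete as it stands.
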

\begin{proof}
	We check the first two identities on the generators $U_1$, $U_2$ of $\mathcal{A}_\theta$ and the generators $V_1$, $V_2$ of $\mathcal{A}_{ \frac{1}{\theta}}$. For $U_1$ we have
	\begin{align*}
	S_H(f.U_1)(x) &= e^{\pi i/12}\theta^{-1/2}\int_{\R} e( (2xy-y^2)/(2\theta)  ) (f.U_1)(y)\;dy \\
	&= e^{\pi i/12}\theta^{-1/2}\int_{\R} e( (2xy-y^2)/(2\theta)  ) f(y+\theta)\;dy \\
	&= e^{\pi i/12}\theta^{-1/2}\int_{\R} e( (2x(y-\theta)-(y-\theta)^2)/(2\theta)  ) f(y)\;dy \\
	&= e^{\pi i/12}\theta^{-1/2}e(-\theta/2)e(-x)\int_{\R} e( (2xy-y^2)/(2\theta)  ) e(y)f(y)\;dy \\
	&= e^{\pi i/12}\theta^{-1/2}e(-\theta/2)e(-x)\int_{\R} e( (2(x+\theta)y-y^2)/(2\theta)  ) f(y)\;dy \\
	&= e^{\pi i/12}\theta^{-1/2}e(-\theta/2)e(-x) S_H(f)(x+\theta) \\
	&= \left[ S_H(f).U_{ \tiny \begin{pmatrix}
			1 \\ -1
		\end{pmatrix}   }   \right ](x) \\
	&= [S_H(f).\alpha_H(U_1)](x).
	\end{align*}
	For the generator $U_2$, we compute
	\begin{align*}
		S_H(f.U_2)(x) &= e^{\pi i/12}\theta^{-1/2}\int_\R e( (2xy-y^2)/(2\theta)  ) (f.U_2)(y)\;dy \\
		&=  e^{\pi i/12}\theta^{-1/2}\int_\R e( (2xy-y^2)/(2\theta)  ) e(y)f(y)\;dy \\
		&=  e^{\pi i/12}\theta^{-1/2}\int_\R e( (2(x+\theta)y-y^2)/(2\theta)  ) f(y)\;dy \\
		&= S_H(f)(x+\theta) = [S_H(f).U_1](x) \\
		&= [S_H(f).\alpha_H(U_2)](x).
	\end{align*}
	For the generator $V_1$ (of $\mathcal{A}_{\frac{1}{\theta}}$), 
	\begin{align*}
		S_H(V_1.f)(x) &=  e^{\pi i/12}\theta^{-1/2}\int_\R e( (2xy-y^2)/(2\theta)  ) (V_1.f)(y)\;dy \\
		&= e^{\pi i/12}\theta^{-1/2}\int_\R e( (2xy-y^2)/(2\theta)  ) f(y+1)\;dy \\
		&= e^{\pi i/12}\theta^{-1/2}\int_\R e( (2x(y-1)-(y-1^2)/(2\theta)  ) f(y)\;dy \\
		&= e^{\pi i/12}\theta^{-1/2} e(-x/\theta) e(-1/(2\theta))  \int_\R e( (2xy-y^2)/(2\theta)  ) e(y/\theta) f(y)\;dy \\
		&= e^{\pi i/12}\theta^{-1/2} e(-x/\theta) e(-1/(2\theta))  \int_\R e( (2(x+1)y-y^2)/(2\theta)  ) f(y)\;dy \\
		&= e(-x/\theta) e(-1/(2\theta)) S_H(f)(x+1) \\
		&= \left[ V_{ \tiny \begin{pmatrix}
				1 \\ 1
			\end{pmatrix}  }. S_H(f) \right] (x) \\
		&= [ \beta_{H'}(V_1).S_H(f) ](x).
	\end{align*}
	Finally, for $V_2$ we have
	\begin{align*}
		S_H(V_2.f)(x) &=  e^{\pi i/12}\theta^{-1/2}\int_\R e( (2xy-y^2)/(2\theta)  ) (V_2.f)(y)\;dy \\ 
		&=  e^{\pi i/12}\theta^{-1/2}\int_\R e( (2xy-y^2)/(2\theta)  ) e(-y/\theta)f(y)\;dy \\
		&=  e^{\pi i/12}\theta^{-1/2}\int_\R e( (2(x-1)y-y^2)/(2\theta)  ) f(y)\;dy \\
		&= S_H(f)(x-1) = \left[  V_{ \tiny \begin{pmatrix}
				-1 \\ 0
			\end{pmatrix} }. S_H(f)   \right](x) \\
		&= [ \beta_{H'}(V_2).S_H(f) ](x).
	\end{align*}
	The second statement follows from the first as in the proof of Proposition \ref{prop:ip-comp}.
\end{proof}

\begin{thm} \label{thm:mortia_finite}
	Let $\theta, \theta'$ be irrational numbers and $k, k'\in \{2,3,4,6\}$. Then the following are equivalent:
	\begin{enumerate}
		\item $\mathcal{A}_{\theta}\rtimes_\alpha \Z_k$ and $\mathcal{A}_{\theta'} \rtimes_\alpha \Z_{k'}$ are Morita equivalent;
		\item $\theta \sim_{Mob} \theta'$ and $k = k'$.
	\end{enumerate}
\end{thm}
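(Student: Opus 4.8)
The plan is to prove the two implications separately, following the template of Theorem~\ref{thm:main_morita}. The role played there by the Heisenberg bimodule $\mathcal{S}(\R)$ carrying a $\Z$-action will here be played by the same bimodule carrying a \emph{finite-order} unitary, and in place of the $K$-theory computation of Section~3 we will rely on the data recorded in Theorem~\ref{thm:ELPW}.

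For $(1)\Rightarrow(2)$ I would first extract $k=k'$ from an elementary invariant: since $K_0$ is a Morita invariant, Theorem~\ref{thm:ELPW} shows that $K_0(\mathcal{A}_\theta\rtimes_\alpha\Z_k)$ is free abelian of rank $6,8,9,10$ according as $k=2,3,4,6$, and these four ranks are pairwise distinct, so a Morita equivalence forces $k=k'$. With $k=k'$ in hand I would reproduce the argument of the $(1)\Rightarrow(2)$ direction of Theorem~\ref{thm:main_morita} (equivalently, Remark~\ref{rem:alt_pf}): both crossed products are simple and monotracial by Theorem~\ref{thm:ELPW}, so the order isomorphism of $K_0$-groups coming from the Morita equivalence is compatible up to a positive scalar with the maps induced by the unique traces; feeding in the identity $\tau_*(K_0(\mathcal{A}_\theta\rtimes_\alpha\Z_k))=\frac{1}{k}(\Z+\theta\Z)$ from Theorem~\ref{thm:ELPW} yields $\Z+\theta\Z=r(\Z+\theta'\Z)$ for some $r>0$, and Rieffel's elementary computation \cite[p.~425]{Rie81} then gives $\theta\sim_{Mob}\theta'$.

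For $(2)\Rightarrow(1)$, recall that $GL_2(\Z)$ is generated by $\begin{pmatrix}1&1\\0&1\end{pmatrix}$ and $\begin{pmatrix}0&1\\1&0\end{pmatrix}$, so $\theta\sim_{Mob}\theta'$ means $\theta$ and $\theta'$ are joined by a finite chain of the elementary moves $\lambda\mapsto\lambda\pm1$ and $\lambda\mapsto 1/\lambda$; by transitivity of Morita equivalence it therefore suffices, for each fixed $k$, to realise these moves. The move $\lambda\mapsto\lambda+1$ already yields a $*$-isomorphism by Theorem~\ref{thm:ELPW}, so only $\lambda\mapsto 1/\lambda$ needs work, and here I would apply Proposition~\ref{prop:Combes} to the $\mathcal{A}_{1/\theta}$-$\mathcal{A}_{\theta}$-imprimitivity bimodule $\mathcal{S}(\R)$ exactly as in Section~4, but with a unitary of order $k$ in place of $S_A$. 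For $k=4$ one uses $S_J$, which has period four by \cite{Wal00} and, by Proposition~\ref{prop:ip-comp}, intertwines the order-four automorphism $\alpha_J$ of $\mathcal{A}_\theta$ with $\beta_{J^{-1}}$ of $\mathcal{A}_{1/\theta}$; for $k=2$ one uses the flip $S_J^2$, intertwining $\alpha_{-I}$ with $\beta_{-I}$; for $k=6$ one uses the hexic transform $S_H$, which has period six by \cite{Wal04} and intertwines $\alpha_H$ with $\beta_{H'}$; and for $k=3$ one uses $S_H^2$, intertwining $\alpha_{H^2}$ with $\beta_{(H')^2}$. In each case Proposition~\ref{prop:Combes} produces a Morita equivalence between the crossed products of $\mathcal{A}_\theta$ and $\mathcal{A}_{1/\theta}$ by the corresponding order-$k$ cyclic subgroups, and since by Theorem~\ref{thm:ELPW} the isomorphism class of $\mathcal{A}_\lambda\rtimes_\alpha\Z_k$ is independent of which order-$k$ subgroup is used, this says precisely that $\mathcal{A}_\theta\rtimes_\alpha\Z_k$ and $\mathcal{A}_{1/\theta}\rtimes_\alpha\Z_k$ are Morita equivalent.

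The main obstacle is the care needed in $(2)\Rightarrow(1)$: one must verify that the operator implementing $\lambda\mapsto 1/\lambda$ on $\mathcal{S}(\R)$ really has \emph{finite} order $k$, so that it defines a $\Z_k$-action rather than merely a $\Z$-action, and one must observe that it intertwines the two copies of the $SL_2(\Z)$-action through order-$k$ subgroups that need not literally coincide (for $k=6$, $\langle H\rangle$ and $\langle H'\rangle$), which is precisely why the uniqueness clause of Theorem~\ref{thm:ELPW} enters at the last step. Fortunately the periodicity of $S_J$ and $S_H$ is already established in \cite{Wal00} and \cite{Wal04}, and the cases $k=2,3$ are handled by their squares, so with the bimodule machinery of Section~4 in place the remaining verifications are short and run exactly parallel to Lemma~\ref{lem:action-comp} and Proposition~\ref{prop:ip-comp}.
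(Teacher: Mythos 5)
Your proposal is correct and follows essentially the same route as the paper's own proof: $(1)\Rightarrow(2)$ via the rank of $K_0$ and the unique-trace argument of Theorem \ref{thm:main_morita}/Remark \ref{rem:alt_pf}, and $(2)\Rightarrow(1)$ by reducing to $\theta'=\frac{1}{\theta}$ and feeding the finite-order unitaries $S_J$ and $S_H$ (with Proposition \ref{prop:ip-comp}) into Proposition \ref{prop:Combes}. Your explicit treatment of $k=2,3$ via $S_J^2$ and $S_H^2$ just spells out what the paper leaves implicit when it says only $k=4,6$ need attention, and your appeal to the independence of the choice of order-$k$ subgroup matches the paper's conjugacy remark.
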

\begin{proof}
	$(2)\Longrightarrow (1)$: As in the proof of Theorem \ref{thm:main_morita} it suffices to consider the case $\theta' = \frac{1}{\theta}$. In view of Proposition \ref{prop:Combes}, we only need to look at the cases $k=4$ and $k=6$. Recall that the operator $S_J$ in the previous section (see Definition \ref{defn:S_J}) has order four and satisfies
	$$
	\ip{ S_J(f), S_J(g) }_{\mathcal{A}^\infty} = \alpha_J ( \ip{f,g}_{\mathcal{A}^\infty}  ),\;\;\;\;\;\; _{\mathcal{B}^\infty}\ip{ S_J(f), S_J(g) } = \beta_{J^{-1}}( _{\mathcal{B}^\infty}\ip{f,g} ).
	$$
	for all $f,g\in \mathcal{S}(\R)$. Note that $J^{-1}$ also generates a cyclic subgroup of order four. Therefore the action $\Z_4\curvearrowright \mathcal{S}(\R)$, $n\mapsto (S_J)^n$ satisfies all the properties in Proposition \ref{prop:Combes}, and we conclude $\mathcal{A}_{\theta} \rtimes_\alpha \Z_4$ and $\mathcal{A}_{\frac{1}{\theta}} \rtimes_\alpha \Z_4$ are Morita equivalent (it does not matter which copy of $\Z_4$ we choose in $SL_2(\Z)$, as they are all conjugate to each other). For $k=6$, we take the action $\Z_6\curvearrowright \mathcal{S}(\R)$, $n\mapsto (S_H)^n$, and conclude in a similar fashion.
	
	$(1)\Longrightarrow(2)$: Looking at the rank of the $K_0$ group allows us to deduce $k=k'$. Since $\mathcal{A}_{\theta} \rtimes \Z_k$ has a unique trace and the image of $K_0$ under the tracial state has the form $\frac{1}{k}(\Z+\theta \Z)$, we get $\theta\sim_{Mob} \theta'$ in exactly the same way as in the proof of \cite[Theorem 4]{Rie81} (see the proof of Theorem \ref{thm:main_morita}, or Remark \ref{rem:alt_pf}). 
\end{proof}

Finally, we would like to explain another approach to prove Theorem \ref{thm:mortia_finite}, exploiting the fact that $\mathcal{A}_\theta\rtimes_\alpha F$ is an AF-algebra. This approach is suggested by Dominic Enders. We shall need the following preliminary result:
\begin{lem}\label{lem:Mobius}
	Suppose $\theta' = \frac{a\theta + b}{c\theta + d}$ for some $\begin{pmatrix}
	a & b \\
	c & d
	\end{pmatrix}\in GL_2(\Z)$. Then $\Z + \theta' \Z = \frac{1}{c\theta + d}(\Z + \theta \Z)$.
\end{lem}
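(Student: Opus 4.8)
The plan is to clear denominators and reduce the asserted equality of subsets of $\R$ to a statement about $\Z^2$ that follows at once from the invertibility of $\begin{pmatrix} a & b \\ c & d\end{pmatrix}$ over $\Z$. First I would record the two elementary facts needed. Since the matrix lies in $GL_2(\Z)$ we have $ad - bc = \pm 1$; and since $\theta$ is irrational while $c, d\in\Z$, the number $c\theta + d$ is nonzero (if $c = 0$ then $d = \pm 1$, and if $c\neq 0$ then $c\theta + d = 0$ would force $\theta = -d/c\in\Q$). Hence multiplying or dividing by $c\theta+d$ is legitimate, and the claimed identity $\Z + \theta'\Z = \frac{1}{c\theta+d}(\Z + \theta\Z)$ is equivalent, after multiplying through by $c\theta+d$, to the identity
$$
(a\theta + b)\Z + (c\theta + d)\Z = \Z + \theta\Z
$$
of $\Z$-submodules of $\R$.

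Next I would expand the left-hand side. A general element is $m(a\theta + b) + n(c\theta + d) = (mb + nd) + (ma + nc)\theta$ with $m, n\in\Z$, so the left-hand side equals $\{\, p + q\theta : (p,q)\in L(\Z^2)\,\}$, where $L\colon\Z^2\to\Z^2$ sends $(m,n)$ to $(mb + nd,\ ma + nc)$, i.e. $L$ has matrix $\begin{pmatrix} b & d \\ a & c\end{pmatrix}$. This matrix has determinant $bc - ad = -(ad - bc) = \mp 1$, so it again lies in $GL_2(\Z)$ and $L$ is a bijection of $\Z^2$; thus $L(\Z^2) = \Z^2$ and the left-hand side is $\{\, p + q\theta : p, q\in\Z\,\} = \Z + \theta\Z$, as required. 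Dividing back by $c\theta + d$ gives the lemma.

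There is no real obstacle here; the only points deserving care are the non-vanishing of $c\theta + d$ (so that the rescaling makes sense) and keeping track of which $2\times2$ matrix is actually being used — note that it is the reshuffled matrix $\begin{pmatrix} b & d \\ a & c\end{pmatrix}$, not $\begin{pmatrix} a & b \\ c & d\end{pmatrix}$ itself, although both have determinant $\pm 1$. One could instead argue more symmetrically by noting that $\theta\mapsto\theta'$ and $\theta'\mapsto\theta$ are induced by mutually inverse elements of $GL_2(\Z)$, yielding the two inclusions directly, but the direct computation above is the shortest route.
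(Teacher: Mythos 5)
Your proof is correct. It differs from the paper's argument mainly in organization: the paper proves the two inclusions separately by direct computation, first expanding $m+\theta' n=\frac{1}{c\theta+d}\bigl[(dm+bn)+\theta(cm+an)\bigr]$ and then exhibiting the reverse containment explicitly via $(am-bn)+\theta'(-cm+dn)=\frac{ad-bc}{c\theta+d}(m+\theta n)$, i.e.\ by writing down the inverse transformation by hand. You instead clear the denominator once, reduce the statement to $(a\theta+b)\Z+(c\theta+d)\Z=\Z+\theta\Z$, and obtain both inclusions simultaneously from the fact that the reshuffled matrix $\begin{pmatrix} b & d \\ a & c\end{pmatrix}$ is unimodular and hence a bijection of $\Z^2$. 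The underlying mechanism is identical in both proofs (the hypothesis $ad-bc=\pm1$), so nothing essentially new is gained, but your version is slightly slicker: it avoids writing out the adjugate computation, and it isolates the one genuinely analytic point, namely that $c\theta+d\neq 0$ because $\theta$ is irrational, which the paper uses implicitly without comment. The paper's explicit two-sided computation, on the other hand, has the minor virtue of displaying the inverse Möbius substitution concretely, which makes the symmetry between $\theta$ and $\theta'$ visible.
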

\begin{proof}
	Let $m,n\in \Z$. Then
	\begin{align*}
		m+ \theta' n &= m + \left(  \frac{a\theta + b}{c\theta + d}  \right) n\\
		&= \frac{1}{c\theta + d}\left[  (dm+bn) + \theta (cm +an)   \right].
	\end{align*}
	This shows that $\Z+ \theta'\Z \subseteq \frac{1}{c\theta + d}(\Z + \theta \Z)$. On the other hand, we have
	\begin{align*}
		(am&-bn) + \theta' (-cm + dn) \\ &= (am-bn) + \left(  \frac{a\theta + b}{c\theta +d}  \right) (-cm + dn) \\
		&= \frac{1}{c\theta+d}\left[  acm\theta + adm - bcn\theta -bdn -acm\theta + adn\theta -bcm + bdn  \right] \\
		&= \frac{ad-bc}{c\theta + d}(m+ \theta n).
	\end{align*}
	Since $ad-bc = \pm 1$, we conclude that $\frac{1}{c\theta + d}(\Z+\theta Z)\subseteq \Z + \theta' \Z$. This completes the proof.
\end{proof}

	Now we explain the promised alternative proof of $(2)\Longrightarrow (1)$ in Theorem \ref{thm:mortia_finite}:
	For notational convenience we will write $\mathcal{C}=\mathcal{A}_\theta\rtimes_\alpha \Z_k$ and $\mathcal{D}=\mathcal{A}_{\theta'}\rtimes_\alpha \Z_{k}$. Using the Brown-Green-Rieffel Theorem \cite{BRG77} and the classification of AF algebras due to Elliott \cite{Ell76}, it is enough to exhibit an order isomorphism
	$(K_0(\mathcal{C}),K_0(\mathcal{C})^+)\rightarrow (K_0(\mathcal{D}),K_0(\mathcal{D})^+)$.	
	By assumption $\theta'=\frac{a\theta+b}{c\theta+d}$ for some $\begin{pmatrix}
	a & b \\
	c & d
	\end{pmatrix}\in GL_2(\Z)$ and hence $\Z + \theta' \Z = \frac{1}{c\theta + d}(\Z + \theta \Z)$ by Lemma \ref{lem:Mobius}. We may also assume that $c\theta+d>0$. Thus, we obtain an order isomorphism
	$$\varphi:\frac{1}{k}(\Z+\theta \Z)\rightarrow \frac{1}{k}(\Z+\theta'\Z),$$
	given by multiplication with $\frac{1}{c\theta+d}>0$.
	Let $\tau_\mathcal{C}$ and $\tau_\mathcal{D}$ denote the unique tracial states on $\mathcal{C}$ and $\mathcal{D}$, respectively. Consider the short exact sequence
	$$0\longrightarrow \ker((\tau_\mathcal{C})_*)\longrightarrow K_0(\mathcal{C})\stackrel{(\tau_\mathcal{C})_*}{\longrightarrow}\frac{1}{k}(\Z+\theta\Z)\longrightarrow 0.$$
	Note, that this sequence splits and since $K_0(\mathcal{C})\cong \Z^n$, where $n\in \lbrace 6,8,9,10\rbrace$ depending on $k$, we have an abstract isomorphism $\ker((\tau_\mathcal{C})_*)\cong \Z^{n-2}$. The same arguments show that $\ker((\tau_\mathcal{D})_*)\cong \Z^{n-2}$. From this we obtain an isomorphism $\psi:\ker((\tau_\mathcal{C})_*)\rightarrow \ker((\tau_\mathcal{D})_*)$. We conclude, that there exists an isomorphism $f:=\psi\oplus \varphi:K_0(\mathcal{C})\rightarrow K_0(\mathcal{D})$, such that the following diagram commutes:
	$$
	\begin{CD}
	K_0(\mathcal{C})@>(\tau_\mathcal{C})_*>>\frac{1}{k}(\Z+\theta\Z)\\
	@VVf V@VV\varphi V\\
	K_0(\mathcal{D})@>(\tau_\mathcal{D})_*>>\frac{1}{k}(\Z+\theta'\Z).
	\end{CD}
	$$
	We claim that $f$ is an order isomorphism. By commutativity of the diagram we have $(\tau_\mathcal{D})_*\circ f=\varphi\circ (\tau_\mathcal{C})_*=\frac{1}{c\theta+d}(\tau_\mathcal{C})_*$ and hence our claim follows from Proposition \ref{prop:order-iso}.

\bibliographystyle{plain}
\bibliography{Biblio-Database}

\end{document}